\newtheorem{theorem}{Theorem}[section]
\newtheorem{lemma}[theorem]{Lemma}
\newtheorem{corollary}[theorem]{Corollary}
\newtheorem{proposition}[theorem]{Proposition}
\newtheorem{question}[theorem]{Question}
\theoremstyle{definition}
\newtheorem{definition}[theorem]{Definition}
\newtheorem{assumption}[theorem]{Assumption}
\theoremstyle{remark}
\newtheorem{remark}[theorem]{Remark}
\newcommand{\B}{\mathbb{B}}
\newcommand{\C}{\mathbb{C}}
\newcommand{\U}{\mathbb{U}}
\newcommand{\ov}{\overline}
\newcommand{\z}{\zeta}
\def\sqr#1#2{{\,\vcenter{\vbox{\hrule height.#2pt\hbox{\vrule width.#2pt
height#1pt \kern#1pt\vrule width.#2pt}\hrule height.#2pt}}\,}}
\begin{document}

\title{Support points for families of univalent mappings on bounded symmetric domains}

\author{HAMADA Hidetaka$^{1,}$\footnote{Corresponding author} \&
KOHR Gabriela$^2$}

\date{}

\maketitle

\begin{center}
{\it $^1$Faculty of Science and Engineering,
Kyushu Sangyo University,
3-1 Matsukadai, 2-Chome, Higashi-ku, Fukuoka 813-8503,
Japan;\\ h.hamada@ip.kyusan-u.ac.jp}
\\
{\it $^2$Faculty of Mathematics and Computer Science,
Babe\c{s}-Bolyai University, 1 M. Kog\u{a}l\-niceanu Str.,
400084 Cluj-Napoca, Romania;\\
gkohr@math.ubbcluj.ro}
\end{center}

\begin{abstract}
In this paper we study some extremal problems for the family $S_g^0(\mathbb{B}_X)$ of
normalized univalent mappings with $g$-parametric
representation on the unit ball $\mathbb{B}_X$ of an $n$-dimensional JB$^*$-triple $X$
with $r\geq 2$, where $r$ is the rank of $X$ and
$g$ is a convex (univalent) function on the unit disc $\mathbb{U}$, which satisfies
some natural assumptions. We obtain sharp coefficient bounds for the family
$S_g^0(\mathbb{B}_X)$, and examples of bounded support points for various
subsets of $S_g^0(\mathbb{B}_X)$.
Our results are generalizations to bounded symmetric domains of known
recent results related to support points for families of univalent mappings
on the Euclidean unit ball $\mathbb{B}^n$ and the unit polydisc $\mathbb{U}^n$ in $\mathbb{C}^n$.
Certain questions will be also mentioned. Finally, we point out sharp 
coefficient bounds and bounded support points for the family $S_g^0(\mathbb{B}^n)$ and 
for special compact subsets of $S_g^0(\mathbb{B}^n)$, in the case $n\geq 2$.
\end{abstract}

{\bf Keywords}{
bounded symmetric domain,
Carath\'eodory family,
$g$-Loewner chain,
parametric representation,
starlike mapping,
support point.}

{\bf MSC(2010)}
{Primary 32H02; Secondary 32M15, 30C55}

\begin{quote}
\textit{We dedicate this paper to Ian Graham, our collaborator and friend,
on the occasion of his 70th birthday}
\end{quote}


\section{Introduction}
\label{sec:intro}
\setcounter{equation}{0}
The study of the family $S(B)$ of normalized univalent
mappings on the unit ball $B$ in $\C^n$ ($n\geq 2$)
with respect to a norm on $\C^n$,
was initiated by H. Cartan
(see \cite{Car}). He was concerned with univalent mappings on the unit
bidisc $\U^2$ in $\C^2$, and gave a counterexample which shows
that the growth theorem for the family $S$ of normalized univalent functions
on the unit disc $\U$ fails in dimension $n=
2$ for $S(\U^2)$. Cartan also suggested that the families of normalized starlike and
convex mappings in $\C^n$ might be considered for further development.
Actually, various
results regarding the family $S$ (growth, covering, coefficient bounds,
distortion, and extremal properties) have extensions to higher
dimensions for these families of univalent mappings on $\B^n$ (see \cite{Gon2}, \cite{Su}).

Loewner chains and the Loewner differential equation in $\C^n$ were first studied by Pfaltzgraff
on the Euclidean unit ball $\B^n$ (see \cite{Pf74}).
He generalized to higher dimensions the Loewner differential equation and developed
existence and uniqueness theorems for its solutions on $\B^n$. Poreda (\cite{Por1}, \cite{Por2})
introduced the family $S^0(\U^n)$ of normalized univalent mappings  with parametric representation on the unit polydisc $\U^n$ in $\C^n$,
and obtained sharp growth and a coefficient bound for $S^0(\U^n)$ under certain additional assumptions.
Graham, Hamada and Kohr \cite{GHK02} introduced and developed the study of the family $S^0(\B^n)$
of mappings with parametric representation on $\B^n$, and obtained growth and coefficient bounds for
some important subsets of $S^0(\B^n)$. We remark that a normalized univalent mapping $f$ on $\B^n$ belongs to
$S^0(\B^n)$ if and only if there is a Loewner chain $f(z,t)$ such
that $\{e^{-t}f(\cdot,t)\}_{t\geq 0}$ is a normal family on $\B^n$ and $f=f(\cdot,0)$ (see \cite{GHK02} and
\cite[Chapter 8]{GK03}). Also, note that in contrast with the family $S(\B^n)$ which is not locally uniformly
bounded in dimension $n\geq 2$, the
family $S^0(\B^n)$ is a compact set with respect to the topology of locally uniform convergence in
the family $H(\B^n)$ of holomorphic mappings from $\B^n$ to $\C^n$.

The existence and regularity theory, as well as many significant differences between the
Loewner theory in  one complex variable and that in higher dimensions have been obtained in the last recent years
(see e.g. \cite{Br}, \cite{BCM}, \cite{BGHK-Variation}, \cite{DGHK}, \cite{GHK02}, \cite{GHK18},
\cite{GHKK13}, \cite{Vo}; see also \cite[Chapter 8]{GK03}, and the references therein).


Graham, Hamada and Kohr \cite{GHK02} introduced the subset $S_g^0(\B^n)$ of $S^0(\B^n)$ which consists
of mappings which have $g$-parametric representation on $\B^n$ (see Definition \ref{d.g-param}),
where $g:\U\to\C$ is a univalent function on $\U$ such that
$g(0)=1$, $\Re g(\zeta)>0$, $\zeta\in \U$, and $g$ satisfies some assumption on $\U$.
These mappings may be embedded as the first elements of $g$-Loewner chains (see Definition \ref{d.g-Loewner}).
Sharp growth and coefficient bounds for the
family $S_g^0(\B^n)$ were obtained in \cite{GHK02} and \cite{GHKK16}. In particular, these results provide also sharp growth
and coefficient estimates for various families of normalized univalent mappings on $\B^n$, such as starlike
and convex mappings (see also \cite{GK03}, and the references therein).

One of the most important problems in the theory of univalent mappings in higher dimensions
is related to the characterization of extreme points and support points for the
family $S^0(\B^n)$. In the case of one complex variable, every extreme/support point of the
family $S$ is an unbounded univalent function (see e.g. \cite[Chapter 6]{Pom2}). In dimension
$n\geq 2$, Bracci \cite{Br} developed a shearing process and found an example of a normalized bounded starlike
mapping on $\B^2$,
which is a support point of the family $S^0(\B^2)$ with respect to a certain linear functional on
$H(\B^2)$. His result is unexpected and very important in the theory of univalent mappings on the unit
ball in $\C^n$, since it provides a basic difference between the one variable theory and that in higher dimensions.

Recently, Graham, Hamada, and Kohr \cite{GHK18} have also considered the family
of mappings with $g$-parametric representation on the unit bidisc $\U^2$ in $\C^2$ (see also \cite{S2},
in the case $g(\zeta)=\frac{1-\zeta}{1+\zeta}$, $\zeta\in\U$). Various sharp coefficient bounds for $S_g^0(\U^2)$
were obtained in \cite{GHK18}.
Also, there are examples of bounded and unbounded support point for the family
$S_g^0(\U^n)$ (see
\cite{GHK18} and \cite{S2}).
Other coefficient bounds for univalent mappings in several complex variables are
given by
\cite{LLX15},
\cite{XL09SC}
and
\cite{XLL18}.


In this paper, we consider extremal problems
for the family $S_g^0(\B_X)$ of mappings with $g$-parametric representation
and for the family of $S_g^*(\B_X)$ of $g$-starlike mappings
on the unit ball $\B_X$ of an $n$-dimensional JB$^*$-triple $X$ of rank $r\geq 2$,
where $g:\U\to \C$ satisfies the conditions of Assumption $\ref{assump}$.
\begin{assumption}
\label{assump}
Let $g:\U\to{\mathbb{C}}$ be a convex (univalent) holomorphic
function such that $g(0)=1$ and $\Re
g(\zeta)>0$, for all $\zeta\in \U$.
\end{assumption}

The family $S_g^*(\B^n)$ of $g$-starlike mappings is a subset of $S^0_g(\B^n)$.
Many important subsets of the family $S^*(\B^n)$ of starlike mappings
are equal to $S_g^*(\B^n)$, for various choices of the function $g$
which satisfies Assumption \ref{assump} and the relation $a_0(g)={\rm dist}(1,\partial g(\U))$
(see \cite{GHK02}).
For example, the family of starlike mappings of order $1/2$ is equal to the family
$S_g^*(\B^n)$, where $g(\zeta)=1-\zeta$, $\zeta\in\U$.
On the other hand, the family
$S_\gamma^*(\B^n)$ of starlike
mappings of order $\gamma\in [0,1)$, is equal to $S_g^*(\B^n)$, where
$g(\zeta)=\frac{1-\zeta}{1+(1-2\gamma)\zeta}$, $\zeta\in \U$.
The family
$SS_\gamma^*(\B^n)$ of strongly starlike
mappings of order $\gamma\in (0,1]$, is equal to $S_g^*(\B^n)$, where
$g(\zeta)=\left(\frac{1-\zeta}{1+\zeta}\right)^{\gamma}$, $\zeta\in \U$,
where the branch of the power function is chosen such
that $\left(\frac{1-\zeta}{1+\zeta}\right)^{\gamma}\Big|_{\zeta=0}=1$.

Graham, Hamada, Kohr and Kohr \cite{GHKK16} generalized
Bracci's result \cite{Br} to the case of mappings in $S_g^0(\B^2)$, and proved the existence of
a bounded $g$-starlike mapping in the family $S_g^0(\B^2)$
of mappings with $g$-parametric representation on the Euclidean unit ball $\B^2$ in $\C^2$
which is a support point for a linear functional.
In the case $g$ satisfies the conditions of Assumption
$\ref{assump}$,
$g(\overline{\zeta})=\overline{g(\zeta)}$ for all $\zeta\in \U$,
$g'(0)<0$
and
$a_0(g)={\rm dist}(1,\partial g(\U))$,
where $a_0(g)$ is given by
\begin{equation}
\label{tilde-a0}
a_0(g)=\inf_{\rho\in (0,1)}\left\{\frac{\min\{|1-g(\rho)|, |g(-\rho)-1|\}}{\rho}\right\},
\end{equation}
then the result contained in \cite[Theorem 4.11 and Remark 4.14]{GHKK16} is as follows.
Note that if $g$ satisfies the above conditions, then
we have (see e.g. \cite{HS} and \cite{MaMi})
\[
\min\{\Re g(\zeta): |\zeta|=\rho\}=g(\rho),\,
\max\{\Re g(\zeta):|\zeta|=\rho\}=g(-\rho),\,\forall\, \rho\in (0,1).
\]

\begin{theorem}
\label{GHKK17-thm}
Let
$g:\U\to\C$
satisfy the conditions of Assumption
$\ref{assump}$,
$g(\overline{\zeta})=\overline{g(\zeta)}$ for all $\zeta\in \U$,
$g'(0)<0$
and
$a_0(g)={\rm dist}(1,\partial g(\U))$.
Also, let $f=(f_1,f_2)\in S_g^0(\B^2)$.
Then
\[
\left|\frac{1}{2}\frac{\partial^2 f_1}
{\partial z_2^2}(0)\right|
\leq \frac{3\sqrt{3}}{2}a_0(g).
\]
This estimate is sharp.
Moreover, $F\in S_g^*(\B^2)$ is a support point for $S_g^0(\B^2)$,
where
\[F(z)=\bigg( z_1+\frac{3\sqrt{3}}{2}a_0(g)z_2^2, z_2 \bigg),
\quad
z=(z_1,z_2)\in \B^2.
\]
Thus, $F$ is also a support point for $S_g^*(\B^2)$.
\end{theorem}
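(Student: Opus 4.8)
The plan is to split the statement into three parts: (i) the coefficient estimate, which reduces via the Loewner differential equation to a sharp bound on the second-order coefficient of mappings in the generating family $\mathcal{M}_g(\B^2)$ (the Carath\'eodory-type family associated with $g$); (ii) sharpness, obtained by exhibiting $F$ explicitly as a $g$-starlike mapping attaining the bound; (iii) the support-point conclusions, which follow from a single continuous linear functional.

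For (i), recall that $f\in S_g^0(\B^2)$ embeds as $f=f(\cdot,0)$ in a $g$-Loewner chain $f(z,t)$ with $\partial_t f(z,t)=Df(z,t)h(z,t)$ for a.e.\ $t\ge 0$, where $h(\cdot,t)\in\mathcal{M}_g(\B^2)$ and $\{e^{-t}f(\cdot,t)\}_{t\ge0}$ is normal. Writing $f(z,t)=e^t z+\sum_{m\ge2}P_m(z,t)$ and $h(z,t)=z+\sum_{m\ge2}Q_m(z,t)$ with $P_m,Q_m$ homogeneous of degree $m$, comparison of the degree-two terms gives $\partial_t P_2=2P_2+e^tQ_2$, and then normality of $\{e^{-t}f(\cdot,t)\}$ yields the familiar representation $P_2(z,0)=-\int_0^\infty e^{-t}Q_2(z,t)\,dt$. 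Hence $\frac12\frac{\partial^2 f_1}{\partial z_2^2}(0)=-\int_0^\infty e^{-t}c(t)\,dt$, where $c(t)$ is the coefficient of $z_2^2$ in the first component of $h(\cdot,t)$, so it suffices to prove that $|c|\le\frac{3\sqrt3}{2}a_0(g)$ for every $h\in\mathcal{M}_g(\B^2)$.

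To prove this, write $h=(h_1,h_2)\in\mathcal{M}_g(\B^2)$ with $h_j(z)=z_j+(\text{terms of degree}\ge2)$ and let $c$ be the coefficient of $z_2^2$ in $h_1$. For a unit vector $v\in\C^2$ the slice function $\phi_v(u):=\langle h(uv),v\rangle/u$ is holomorphic on $\U$, satisfies $\phi_v(0)=1$ and $\phi_v(u)=1+\sum_{k\ge1}\langle Q_{k+1}(v),v\rangle u^k$, and takes values in the convex domain $g(\U)$. Taking $v=v_t:=(\rho_1,\rho_2 e^{it})$ with $\rho_1,\rho_2>0$, $\rho_1^2+\rho_2^2=1$, and expanding $\langle Q_2(v_t),v_t\rangle$ as a trigonometric polynomial in $t$, its frequency-$2$ part is exactly $c\,\rho_1\rho_2^2\,e^{2it}$, while for $k\ge2$ the product $e^{-2ikt}\langle Q_{k+1}(v_t),v_t\rangle$ carries only strictly negative frequencies. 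Therefore
\[
G(\zeta):=\frac{1}{2\pi}\int_0^{2\pi}\phi_{v_t}\!\left(\zeta e^{-2it}\right)dt
\]
is holomorphic on $\U$ and equals $G(\zeta)=1+c\,\rho_1\rho_2^2\,\zeta$; moreover, for each fixed $\zeta\in\U$ the value $G(\zeta)$ is an average of points of the convex set $g(\U)$, hence lies in $g(\U)$. Thus the disc $\{w\in\C:|w-1|<|c|\rho_1\rho_2^2\}=G(\U)$ is contained in $g(\U)$, so $|c|\rho_1\rho_2^2\le\mathrm{dist}(1,\partial g(\U))=a_0(g)$. Since $\max\{\rho_1\rho_2^2:\rho_1^2+\rho_2^2=1\}=\frac{2}{3\sqrt3}$ (attained at $\rho_1=1/\sqrt3$), we get $|c|\le\frac{3\sqrt3}{2}a_0(g)$. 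The genuinely inventive step is the rotated argument $u\mapsto\zeta e^{-2it}$: it forces $G$ to be holomorphic with image a true disc, which upgrades the crude bound $|c|\le\frac{3\sqrt3}{2}|g'(0)|$ (obtained from $G\prec g$ alone) to the sharp one involving $a_0(g)=\mathrm{dist}(1,\partial g(\U))$; I expect this to be the main obstacle, the rest being routine.

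For (ii) and (iii), set $c_0:=\frac{3\sqrt3}{2}a_0(g)$ and $F(z)=(z_1+c_0z_2^2,z_2)$. A direct computation gives $[DF(z)]^{-1}F(z)=(z_1-c_0z_2^2,z_2)=:h_F(z)$, whose slice function for a unit $v$ is $u\mapsto 1-c_0\,u\,v_2^2\bar v_1$, with image the disc $\{w\in\C:|w-1|<c_0|v_1||v_2|^2\}\subseteq\{w\in\C:|w-1|<a_0(g)\}\subseteq g(\U)$, by the same optimization and by convexity of $g(\U)$. Hence $h_F\in\mathcal{M}_g(\B^2)$, so $F$ is $g$-starlike, $e^tF(z)$ is a $g$-Loewner chain, and $F\in S_g^*(\B^2)\subseteq S_g^0(\B^2)$; as the coefficient of $z_2^2$ in $F_1$ is $c_0$, equality holds in the estimate, giving sharpness. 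Finally, $L(f):=\frac12\frac{\partial^2 f_1}{\partial z_2^2}(0)$ is a continuous linear functional on $H(\B^2,\C^2)$ that is non-constant on $S_g^0(\B^2)$ (it vanishes at the identity, which lies in $S_g^*(\B^2)$); since $\Re L(f)\le|L(f)|\le c_0=L(F)$ for all $f\in S_g^0(\B^2)$, the mapping $F$ (bounded, being a polynomial) is a support point of $S_g^0(\B^2)$, and, since $F\in S_g^*(\B^2)$ and $L$ is non-constant on $S_g^*(\B^2)$ as well, the same $L$ shows $F$ is a support point of $S_g^*(\B^2)$.
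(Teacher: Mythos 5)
Your proof is correct and follows essentially the same route as the paper's (the theorem is quoted from Graham--Hamada--Kohr--Kohr, and the machinery the present paper develops in Propositions \ref{p.shear1}/\ref{p.shear1-E} and Theorems \ref{p.extremal}, \ref{r.supp-bounded}, \ref{t.estim-E} is exactly your three-step scheme: a convexity--averaging argument over a torus orbit to bound the $z_2^2$-coefficient of $h\in{\mathcal M}_g(\B^2)$ by ${\rm dist}(1,\partial g(\U))/\max\rho_1\rho_2^2=\frac{3\sqrt{3}}{2}d_1(g)$, transfer to $S_g^0(\B^2)$ by integrating the degree-two coefficient ODE of the Loewner equation together with normality, and the linear functional $f\mapsto\frac12\frac{\partial^2 f_1}{\partial z_2^2}(0)$). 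The only cosmetic differences are that you bypass the shearing theorem by working directly with the degree-two homogeneous part of the chain, and you package the average as the holomorphic disc $G(\zeta)=1+c\,\rho_1\rho_2^2\zeta$ landing in $g(\U)$ rather than as the one-parameter family of points $1+|q|e^{i\lambda}\rho\in\overline{g(\U)}$ used in the paper.
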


Further, Graham, Hamada and Kohr \cite{GHK18} considered the analog of the
above result on the unit bidisc $\U^2$ in $\C^2$, and proved the existence of
a bounded $g$-starlike mapping in the family $S_g^0(\U^2)$
which is a support point for a linear functional.
In the case $g$ satisfies the above conditions, then the result contained
in \cite[Theorem 5.2 and Remark 5.5]{GHK18} is as follows:

\begin{theorem}
\label{GHKK18-thm}
Let
$g:\U\to\C$
satisfy the conditions of Assumption
$\ref{assump}$,
$g(\overline{\zeta})=\overline{g(\zeta)}$ for all $\zeta\in \U$,
$g'(0)<0$
and
$a_0(g)={\rm dist}(1,\partial g(\U))$.
Also, let $f=(f_1,f_2)\in S_g^0(\U^2)$.
Then
\[
\left| \frac{1}{2}\frac{\partial^2 f_1}
{\partial z_2^2}(0)\right|
\leq a_0(g).
\]
This estimate is sharp.
Moreover, $F\in S_g^*(\U^2)$ is a support point for $S_g^0(\U^2)$, where
\[F(z)=\big( z_1+a_0(g)z_2^2, z_2 \big),\quad z=(z_1,z_2)\in \U^2.\]
Thus, $F$ is also a support point for $S_g^*(\U^2)$.
\end{theorem}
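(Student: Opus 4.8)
The plan is to combine the $g$-parametric representation with a sharp coefficient estimate for the associated $g$-Carath\'eodory family $\mathcal{M}_g(\U^2)$, and then to produce the extremal mapping explicitly and read off the support point from it.

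First I would fix $f=(f_1,f_2)\in S_g^0(\U^2)$ and a $g$-Loewner chain $f(z,t)=e^tz+\sum_{k\ge 2}P_k(z,t)$ with $f(\cdot,0)=f$ and $\{e^{-t}f(\cdot,t)\}_{t\ge 0}$ a normal family, whose Herglotz vector field $h(z,t)=z+\sum_{k\ge 2}Q_k(z,t)$ satisfies $h(\cdot,t)\in \mathcal{M}_g(\U^2)$ for a.e. $t\ge 0$. Substituting these expansions into the generalized Loewner equation $\partial_t f(z,t)=Df(z,t)h(z,t)$ and using Euler's identity $DP_2(z,t)z=2P_2(z,t)$, the homogeneous degree-two part gives $\partial_t P_2=e^tQ_2+2P_2$, i.e. $\partial_t\bigl(e^{-2t}P_2(z,t)\bigr)=e^{-t}Q_2(z,t)$. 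Integrating on $[0,\infty)$ and using that local boundedness of $\{e^{-t}f(\cdot,t)\}$ forces $e^{-2t}P_2(z,t)\to 0$, one gets $P_2(z,0)=-\int_0^\infty e^{-t}Q_2(z,t)\,dt$. Comparing the coefficients of $z_2^2$ in the first components then yields
\[
\frac12\frac{\partial^2 f_1}{\partial z_2^2}(0)=-\int_0^\infty e^{-t}c(t)\,dt,\qquad c(t):=\frac12\frac{\partial^2 h_1}{\partial z_2^2}(0,t),
\]
and since $\int_0^\infty e^{-t}\,dt=1$ the desired estimate reduces to the key fact that $|c(t)|\le a_0(g)$ for a.e. $t$; that is, to the sharp bound $\bigl|\tfrac12\partial^2 h_1/\partial z_2^2(0)\bigr|\le a_0(g)$ for every $h=(h_1,h_2)\in\mathcal{M}_g(\U^2)$.

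The hard part is exactly this last bound. The membership condition for $\mathcal{M}_g(\U^2)$ only controls $h_1(z)/z_1$ on the face-type region $\{|z_1|\ge|z_2|\}$, on which one cannot make $|z_1|$ comparable to $|z_2|^2$, so the $z_2^2$-coefficient cannot be isolated directly and must be extracted by combining that constraint with the holomorphy of $h_1$ on all of $\U^2$ and with the convexity of $g(\U)$. I would pass to the slices $(z_1,z_2)=(\eta,\eta\xi)$, $(\eta,\xi)\in\U^2$, which sweep out $\{|z_2|\le|z_1|\}$, so that $\widetilde F(\eta,\xi):=h_1(\eta,\eta\xi)/\eta$ is holomorphic on $\U^2$, has values in $g(\U)$, satisfies $\widetilde F(0,\xi)\equiv 1$, and inherits from holomorphy of $h_1$ a triangular structure in which the $z_2^2$-coefficient of $h_1$ is the coefficient of $\eta\xi^2$ in $\widetilde F$; then, analysing $\eta\mapsto\widetilde F(\eta,\xi)$ and $\xi\mapsto\widetilde F(\eta,\xi)$ through subordination to the convex function $g$ and the Schwarz lemma (with its boundary rigidity) and handling the remaining lower-order and cross terms by averaging over the torus $|z_1|=|z_2|$, one should obtain that this coefficient is bounded by the inradius $a_0(g)={\rm dist}(1,\partial g(\U))$ of $g(\U)$ at $1$. (This is precisely one of the sharp coefficient estimates for $S_g^0(\U^2)$ established in the cited work, which could also simply be invoked.)

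For sharpness and for the support point, set $F(z)=(z_1+a_0(g)z_2^2,z_2)$. Then $DF(z)$ is upper triangular with unit diagonal and off-diagonal entry $2a_0(g)z_2$, so $[DF(z)]^{-1}F(z)=(z_1-a_0(g)z_2^2,z_2)=:h^F(z)$; since $|z_2^2/z_1|=|z_2|\,|z_2/z_1|\le|z_2|<1$ on $\{|z_1|\ge|z_2|\}$ and $\{\,w:|w-1|<a_0(g)\,\}\subseteq g(\U)$ because $a_0(g)={\rm dist}(1,\partial g(\U))$, one checks $h^F\in\mathcal{M}_g(\U^2)$, hence $F\in S_g^*(\U^2)\subseteq S_g^0(\U^2)$ and $\tfrac12\partial^2 F_1/\partial z_2^2(0)=a_0(g)$, so the estimate is attained. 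Finally, $L(\varphi):=\tfrac12\partial^2\varphi_1/\partial z_2^2(0)$ is a continuous $\C$-linear functional on $H(\U^2)$ with $\Re L(\varphi)\le|L(\varphi)|\le a_0(g)=L(F)=\Re L(F)$ for all $\varphi\in S_g^0(\U^2)$, and $\Re L$ is non-constant on $S_g^0(\U^2)$ (e.g. $L({\rm id})=0$); thus $F$ is a support point of $S_g^0(\U^2)$. Since moreover $F\in S_g^*(\U^2)\subseteq S_g^0(\U^2)$, $\Re L$ is non-constant on $S_g^*(\U^2)$ as well and attains its maximum $a_0(g)$ there at $F$, so $F$ is also a support point of $S_g^*(\U^2)$.
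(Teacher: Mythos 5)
Your overall architecture coincides with the one this paper uses for its general rank-$r$ theorems (the $\U^2$ statement itself is only quoted here from \cite{GHK18}, but it is precisely the $r=n=2$ polydisc case of Theorem \ref{t0.1}, proved via Proposition \ref{p.shear1}, Proposition \ref{p.estim}, Theorem \ref{p.extremal} and Theorem \ref{r.supp-bounded}): reduce the second-order coefficient of $f$ to an integral of the corresponding coefficient of the Herglotz vector field via the Loewner equation and the normality of $\{e^{-t}f(\cdot,t)\}_{t\ge 0}$, prove the coefficient bound on ${\mathcal M}_g(\U^2)$, realize the extremal map as the shear $z\mapsto z+a_0(g)z_2^2e_1$ (which is $g$-starlike, hence in $S_g^0$), and exhibit the linear functional $L(\varphi)=\tfrac12\partial^2\varphi_1/\partial z_2^2(0)$. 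Those parts of your argument are correct and match the paper's.

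The genuine gap is the step you yourself call ``the hard part'': the bound $\bigl|\tfrac12\partial^2 h_1/\partial z_2^2(0)\bigr|\le a_0(g)$ for $h\in{\mathcal M}_g(\U^2)$ is not actually proved. Worse, the route you sketch --- slicing by $(\eta,\eta\xi)$, subordinating $\eta\mapsto h_1(\eta,\eta\xi)/\eta$ to $g$ and invoking the Schwarz lemma --- can only deliver the constant $|g'(0)|$, and for convex $g$ one has $|g'(0)|\ge {\rm dist}(1,\partial g(\U))$ with possible strict inequality up to a factor $2$ (e.g. $g(\zeta)=\frac{1-\zeta}{1+\zeta}$ has $|g'(0)|=2$ but ${\rm dist}(1,\partial g(\U))=1$), so that route cannot close. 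The argument that works is the convexity averaging of Proposition \ref{p.shear1}: on the torus $|z_1|=|z_2|=\rho$ both coordinate functionals lie in $T(z)$, so $h_1(z)/z_1\in g(\U)$ there; writing $q=\tfrac12\partial^2h_1/\partial z_2^2(0)=|q|e^{i\eta}$ and setting $z_1=\rho e^{i(\eta+\theta-\lambda)}$, $z_2=\rho e^{i\theta/2}$, the average $\frac{1}{4\pi}\int_0^{4\pi}h_1(z)/z_1\,d\theta=1+|q|e^{i\lambda}\rho$ lies in $\overline{g(\U)}$ by convexity of $g(\U)$; since $\lambda$ is arbitrary this forces $|q|\rho\le{\rm dist}(1,\partial g(\U))=a_0(g)$, and $\rho\to1$ finishes it. You do name ``averaging over the torus'' and ``convexity of $g(\U)$'' in passing, and you note the lemma could be cited, but as written the decisive step is neither executed nor correctly set up.
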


On the other hand, Graham, Hamada, Kohr, and Kohr \cite{GHKK15} generalized Bracci's shearing
result to the case of the family of normalized
biholomorphic mappings on $\B^2$, which have parametric representation with
respect to a diagonal matrix $A={\rm diag}\,(1,\lambda)$, where $\lambda\in [1,2)$, and proved the
existence of a bounded spirallike mapping with respect to $A$, which is a support point for
the above family. Another generalization of Bracci's shearing process to the
case of mappings with parametric representation on $\B^2$ with respect to a time-dependent
linear operator was considered in \cite{HIK}.

The above results are in contrast to the case $n=1$, where
all support points for the family $S=S^0(\U)$ are unbounded.
Also, it is interesting that the coefficient bounds on $\U^2$
are different from those in the case of the Euclidean
unit ball $\B^2$.
Note that
the Euclidean unit ball $\B^n$ and the unit polydisc $\U^n$
are bounded symmetric domains in $\C^n$.
From the point of view of the Riemann mapping theorem, a homogeneous unit ball of a complex Banach
space is a natural generalization of the open unit disc.
Every bounded symmetric domain in
a complex Banach space is biholomorphically equivalent to a homogeneous unit ball.

Based on the above arguments, it is therefore of
interest to consider $g$-Loewner chains and mappings which have $g$-parametric
representation on the Euclidean unit ball $\B^n$ and the unit polydisc $\U^n$,
as well as on other bounded symmetric domains in $\C^n$ which contain the origin,
and to study if extremal problems for the families
$S_g^0(\B^n)$ and $S_g^0(\U^n)$
may be extended in the case of bounded symmetric domains in $\C^n$.
Also, there exists a mapping $g$ on $\U$ which satisfies the conditions of Assumption $\ref{assump}$
and $a_0(g)>{\rm dist}(1,\partial g(\U))$ (see \cite[Remark 4.4]{GHKK16}).
Thus, it is natural to ask
the following questions:

\begin{question}
\label{q.01}
Let $g:\U\to\C$ satisfy the conditions of Assumption $\ref{assump}$.
Can we obtain bounded support points of the families $S_g^0(\B^2)$
and $S_g^0(\U^2)$
without assuming $a_0(g)={\rm dist}(1,\partial g(\U))$?
\end{question}

\begin{question}
\label{q.02}
Let $g:\U\to\C$ satisfy the conditions of Assumption $\ref{assump}$.
Can we obtain bounded support points of the family $S_g^0(D)$
for other bounded symmetric domains $D$ in $\C^n$, $n\geq 2$?
\end{question}

In this paper, we give positive answers to the above questions
in the case the domain $D$ is a bounded symmetric domain
realized as the unit ball of an $n$-dimensional JB$^*$-triple $X=(\C^n, \| \cdot\|_X)$
of rank $r\geq 2$.
For the proof, we use the convexity of $g$ and also the property that 
there exists an $r$-dimensional subspace $X_1$ of $\C^n$ such that
$\B_X\cap X_1$ may be regarded as the unit polydisc $\U^r$ of dimension $r$.
The main results complement recent extremal results for mappings
with parametric representation on the unit ball $\B^n$
(see \cite{Br}, \cite{BR}, \cite{GHKK15},
\cite{GHKK16}, \cite{HIK}), and on the unit polydisc $\U^n$ in $\C^n$ (see
\cite{GHK18}, \cite{S2}).
Since we do not assume that $a_0(g)={\rm dist}(1,\partial g(\U))$,
our result is an improvement of the above theorems.
As a new corollary, we obtain a sharp coefficient bound and bounded support points
for the family of strongly starlike mappings of order $\alpha$ on $\B_X$.
Note that, in general, we have
$a_0(g)\geq {\rm dist}(1,\partial g(\mathbb{U}))$
for functions $g$ which satisfy the conditions of Assumption $\ref{assump}$
(see Proposition \ref{p.a0}).

Let $r$ be the rank of $X$
and let $e_1, \ldots, e_r$ be a frame of $X$.
There exist $e_{r+1}, \ldots, e_n\in X$ such that $e_1, \ldots, e_n$ is an orthogonal basis of $X$
with respect to the Bergman metric $h_0$ on $\B_X$ at $0$.
For $z=z_1e_1+\cdots +z_n e_n\in X$,
we use the notation $z=(z_1, z_2, \dots, z_n)$.

The main results of this paper are given below. The notations will be explained in
the next sections.

\begin{theorem}
\label{t0.1}
Let $\B_X$ be  the unit ball of an $n$-dimensional JB$^*$-triple $X=(\C^n, \| \cdot\|_X)$
of rank $r\geq 2$.
Let
$g:\U\to\C$
satisfy the conditions of Assumption
$\ref{assump}$
and let $d_1(g)={\rm dist}(1,\partial g(\U))$.
Also, let $f=(f_1,\dots, f_n)\in S_g^0(\B_X)$.
Then
\[
\bigg|\frac{1}{2}\frac{\partial^2 f_i}
{\partial z_j^2}(0)\bigg| \leq d_1(g),
\quad \mbox{for }
1\leq i\neq j\leq r.
\]
These estimates are sharp.
Moreover, $F_{i,j}[g]\in S_g^*(\B_X)$ $(1\leq i\neq j\leq r)$
are bounded support points for
$S_g^0(\B_X)$,
where $F_{i,j}[g](z)=z\pm d_1(g)z_j^2e_i,\quad z\in\B_X$.

In particular,
$F_{i,j}[g]$ $(1\leq i\neq j\leq r)$
are bounded support points for
$S_g^*(\B_X)$.
\end{theorem}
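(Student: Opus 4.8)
The plan is to derive the three assertions from three ingredients: a reduction of the $g$-Loewner data on $\B_X$ to the polydisc slice $\B_X\cap X_1\cong\U^{r}$, a sharp two-jet estimate for the associated $g$-Carath\'eodory family that rests on the convexity of $g$, and a direct computation showing that the maps $F_{i,j}[g]$ are $g$-starlike and attain the estimate. Fix $i\ne j$ with $1\le i,j\le r$. Since $e_i,e_j$ are frame elements, $\|z_ie_i+z_je_j\|_X=\max\{|z_i|,|z_j|\}$, so the slice through $0$ in the $e_i,e_j$-directions is a bidisc, and more generally $\B_X\cap X_1$ is $\U^{r}$ in the coordinates $(z_1,\dots,z_r)$. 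Given $f\in S_g^0(\B_X)$, I would first embed $f$ as the first element $f=f(\cdot,0)$ of a $g$-Loewner chain (Definition~\ref{d.g-Loewner}) with Herglotz vector field $h(\cdot,t)$, so that $h(\cdot,t)$ lies in the $g$-Carath\'eodory family $\mathcal M_g(\B_X)$ for a.e.\ $t\ge0$. Expanding $f=\lim_{t\to\infty}e^{t}v(\cdot,t)$, where $\partial_tv(z,t)=h(v(z,t),t)$ and $v(z,0)=z$, into homogeneous parts yields the standard representation $\tfrac12 D^2f(0)(z,z)=\int_0^{\infty}e^{-s}H_2(z,s)\,ds$, with $H_2(\cdot,s)$ the degree-two homogeneous part of $h(\cdot,s)$; hence $\tfrac12\,\partial^2 f_i/\partial z_j^2(0)$ is the $e^{-s}\,ds$-average over $s\ge 0$ of the $z_j^2e_i$-coefficient of $h(\cdot,s)$. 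Thus the estimate will follow once one proves: \emph{for every $p\in\mathcal M_g(\B_X)$, the $z_j^2e_i$-coefficient $b$ of $p$ satisfies $|b|\le d_1(g)$ whenever $1\le i\ne j\le r$.}

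\emph{The main obstacle is this two-jet estimate.} My approach would be: replacing $p$ by $\tfrac1\sigma p(\sigma\,\cdot)$ with $\sigma\uparrow1$ (again a member of $\mathcal M_g(\B_X)$, with its coefficient $b$ multiplied by $\sigma$), one may assume $p$ is holomorphic past $\partial\B_X$; then restrict $p$ to the bidisc in the $e_i,e_j$-directions and use the defining condition of $\mathcal M_g$ on the distinguished-boundary faces $|z_i|=\|z\|_X$, which — because $e_i,e_j$ are frame elements — is exactly the polydisc face condition, so a suitable holomorphic function of the form $-p_i(\,\cdot\,)/z_i$ takes values in $\overline{g(\U)}$ there. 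Letting $z_i$ traverse the unit circle while the ratio $z_j^2/z_i$ runs over $\U$, the terms in that function other than the constant $1$ and $-b\,w$ ($w\in\U$) wash out, so $1-b\,\U\subseteq\overline{g(\U)}$; since $g(\U)$ is open, convex and contains $1$, the open disc $\{|\,\cdot-1|<|b|\}$ must then lie in $g(\U)$, i.e.\ $|b|\le\operatorname{dist}(1,\partial g(\U))=d_1(g)$. The delicate points I anticipate are making precise the passage from $\mathcal M_g(\B_X)$ to the polydisc face condition, extracting the single monomial $z_j^2$ from the remaining second- and higher-order terms, and — crucially — using the convexity of $g$ to get the sharp constant $d_1(g)$ instead of the weaker $|g'(0)|\ (\ge d_1(g))$ that a plain slicewise subordination would give; this is precisely the mechanism by which the hypothesis $a_0(g)=d_1(g)$ present in Theorems~\ref{GHKK17-thm} and~\ref{GHKK18-thm} is removed.

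Next I would analyze the extremal maps. For $F:=F_{i,j}[g]$, i.e.\ $F_i(z)=z_i\pm d_1(g)z_j^2$ and $F_k(z)=z_k$ for $k\ne i$, the only nonzero off-diagonal entry of $DF(z)$ sits in position $(i,j)$, and since the corresponding elementary matrix $E_{ij}$ satisfies $E_{ij}^2=0$ (because $i\ne j$), a one-line computation gives $[DF(z)]^{-1}F(z)=z\mp d_1(g)z_j^2e_i$, so the vector field attached to $F$ in the $g$-starlikeness criterion is $h_F(z)=-z\pm d_1(g)z_j^2e_i$. I would then verify directly, using the JB$^*$-triple description of the norm-attaining functionals of $X$, that $h_F\in\mathcal M_g(\B_X)$: this is the boundary instance of the estimate above and it goes through precisely because the closed disc of radius $d_1(g)$ about $1$ lies in the convex set $g(\U)$, so the disc condition, necessary above, is here also sufficient. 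Consequently $F_{i,j}[g]\in S_g^*(\B_X)\subseteq S_g^0(\B_X)$, and since $\bigl|\tfrac12\,\partial^2 F_i/\partial z_j^2(0)\bigr|=d_1(g)$, the estimate of the first step is sharp.

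Finally, for the support-point conclusion, I would fix the sign in $F_{i,j}[g]$ and choose a unimodular $\lambda$ with $\lambda\cdot\tfrac12\,\partial^2 F_i/\partial z_j^2(0)=d_1(g)$; then $L(f):=\lambda\cdot\tfrac12\,\partial^2 f_i/\partial z_j^2(0)$ is a continuous $\C$-linear functional on $H(\B_X)$ that is nonconstant on $S_g^0(\B_X)$ (it vanishes at the identity) and, by the previous steps, satisfies $\Re L(f)\le d_1(g)$ for all $f\in S_g^0(\B_X)$ with equality at $F_{i,j}[g]$. Hence $F_{i,j}[g]$ is a support point of $S_g^0(\B_X)$, and it is bounded because it is a polynomial mapping on the bounded set $\B_X$. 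Since $F_{i,j}[g]\in S_g^*(\B_X)$ and $S_g^*(\B_X)\subseteq S_g^0(\B_X)$, the same functional $L$ shows that $F_{i,j}[g]$ is a support point of $S_g^*(\B_X)$ as well, which would complete the proof.
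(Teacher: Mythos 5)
Your proposal is correct and follows essentially the same route as the paper: the core is the sharp bound $\bigl|\tfrac12\,\partial^2 h_i/\partial z_j^2(0)\bigr|\le d_1(g)$ for $h\in\mathcal M_g(\B_X)$, obtained exactly as in Proposition \ref{p.shear1} by restricting to the bidisc slice spanned by the frame elements $e_i,e_j$, averaging over the torus while holding $z_j^2/z_i$ fixed, and invoking convexity of $g(\U)$; the transfer to $S_g^0(\B_X)$ via the $e^{-s}\,ds$-average of the Herglotz field's two-jet is the same integral identity the paper extracts from the Loewner ODE in Theorem \ref{p.extremal} (there organized through the shearing chain of Theorem \ref{t.shearing-chain}), and the starlikeness computation and linear functional match Proposition \ref{p.estim} and Theorem \ref{r.supp-bounded}. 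The only blemishes are cosmetic sign slips (the Loewner ODE is $\partial_t v=-h(v,t)$, and $\mathcal M_g$ requires $Dh(0)=I_X$ rather than $-I_X$) and the statement that the \emph{closed} disc of radius $d_1(g)$ about $1$ lies in $g(\U)$, where only the open disc is needed and available.
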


\begin{theorem}
\label{t0.3}
Let
$g:\U\to\C$
satisfy the conditions of Assumption
$\ref{assump}$
and let
$d_1(g)={\rm dist}(1,\partial g(\U))$.
Also, let $n\geq 2$ and let $f=(f_1,\dots, f_n)\in S_g^0(\B^n)$.
Then
$$\left|
\frac{1}{2}\frac{\partial^2 f_i}{\partial z_j^2}(0)
\right|
\leq \frac{3\sqrt{3}}{2}d_1(g),
\quad
\mbox{for }
1\leq i\neq j\leq n.$$
These estimates are sharp. Moreover,
$\widetilde{F}_{i,j}[g]\in S_g^*(\B^n)$ are bounded support points for $S_g^0(\B^n)$,
$(1\leq i\neq j\leq n)$, where
$\widetilde{F}_{i,j}[g](z)=z\pm\frac{3\sqrt{3}}{2}d_1(g)z_j^2e_i$, $z\in\B^n$. 

In particular, $\widetilde{F}_{i,j}[g]$ $(1\leq i\neq j\leq r)$ 
are bounded support points for $S_g^*(\B^n)$.
\end{theorem}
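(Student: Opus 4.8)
The argument has three ingredients: a sharp estimate for the relevant second-order coefficient over $S_g^0(\B^n)$, the construction of the extremal mappings, and the passage from the estimate to the support-point property. Fix $1\le i\ne j\le n$; since $\B^n$ is the Euclidean ball, $S_g^0(\B^n)$ together with its defining $g$-Loewner chains is invariant under unitary changes of variable, so we may assume $(i,j)=(1,2)$. To bound the coefficient, take $f=(f_1,\dots,f_n)\in S_g^0(\B^n)$ and a $g$-Loewner chain $f(z,t)$ with $f(\cdot,0)=f$ and $\{e^{-t}f(\cdot,t)\}_{t\ge 0}$ normal; it satisfies the Loewner equation $\partial_t f(z,t)=Df(z,t)h(z,t)$ for a.e. $t$, with $h(\cdot,t)$ in the $g$-Carath\'eodory family $\mathcal{M}_g(\B^n)$ (in particular $h(0,t)=0$, $Dh(0,t)=I$, and $\langle h(z,t),z\rangle/\|z\|^2\in g(\U)$ for $z\ne 0$). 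Writing $f_1(z,t)=e^tz_1+b(t)z_2^2+\cdots$ and $h_1(z,t)=z_1+d(t)z_2^2+\cdots$ and comparing the coefficients of $z_2^2$ on the two sides of the Loewner equation gives $b'(t)=2b(t)+e^td(t)$; multiplying by $e^{-2t}$, integrating over $[0,\infty)$ and using $e^{-2t}b(t)\to 0$ (a Cauchy estimate for the normal family) yields $b(0)=-\int_0^\infty e^{-t}d(t)\,dt$, whence $|\tfrac12\partial_{z_2}^2f_1(0)|=|b(0)|\le\sup_{t\ge 0}|d(t)|$. Restricting the relation $\langle h(z),z\rangle/\|z\|^2\in g(\U)$ to $z\in\mathrm{span}(e_1,e_2)$ shows that $\widetilde h(w):=\bigl(h_1(w_1e_1+w_2e_2),h_2(w_1e_1+w_2e_2)\bigr)$ lies in $\mathcal{M}_g(\B^2)$ with $\tfrac12\partial_{w_2}^2\widetilde h_1(0)=\tfrac12\partial_{z_2}^2h_1(0)$, so everything reduces to the following key estimate: if $h\in\mathcal{M}_g(\B^2)$ then $|\tfrac12\partial_{z_2}^2h_1(0)|\le\tfrac{3\sqrt3}{2}d_1(g)$.

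This key estimate is the main obstacle, because the point is to obtain the sharp constant $d_1(g)={\rm dist}(1,\partial g(\U))$ rather than the weaker $a_0(g)$ of the earlier theorems, and this forces one to use the convexity of $g(\U)$ globally: subordination of $\langle h(z),z\rangle/\|z\|^2$ along complex lines through the origin only produces a bound with $|g'(0)|$ in place of $d_1(g)$, which is in general strictly larger. (For $g(\zeta)=\frac{1-\zeta}{1+\zeta}$ the estimate is the sharp second-coefficient bound for the classical Carath\'eodory class $\mathcal{M}(\B^2)$.) The plan is, with $c:=\tfrac12\partial_{z_2}^2h_1(0)$ and $u(z):=\langle h(z),z\rangle/\|z\|^2\in g(\U)$, to form convex combinations of the values of $u$ over circles $\{(a,\rho e^{i\theta}):\theta\in[0,2\pi)\}\subseteq\B^2$ with $a,\rho>0$ fixed, integrating against nonnegative trigonometric weights (of type $1+\cos(2\theta-\gamma)$) whose Fourier support is chosen so that the contribution of $c$ survives while those of the remaining Taylor coefficients of $h$ cancel. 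Such an average lies in the convex set $g(\U)$, and as $\gamma$ varies it runs over a round circle centered at a point of $g(\U)$ with radius a fixed positive multiple of $|c|$; combining this with the inclusion $B(1,d_1(g))\subseteq g(\U)$ and letting $(a,\rho)$ approach the configuration $a=1/\sqrt3$, $\rho^2=2/3$, for which $a\rho^2/(a^2+\rho^2)$ attains its maximum $\tfrac{2\sqrt3}{9}=\tfrac{2}{3\sqrt3}$ over $\{a^2+\rho^2\le 1\}$, forces $|c|\cdot\tfrac{2\sqrt3}{9}\le d_1(g)$, i.e. $|c|\le\tfrac{3\sqrt3}{2}d_1(g)$. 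The delicate part is to check that, with the correct weights and values of $(a,\rho)$, all lower- and higher-order contributions genuinely drop out, so that the in-radius $d_1(g)$ is recovered exactly.

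For sharpness and the support-point property, take the $+$ sign and put $F(z):=z+\tfrac{3\sqrt3}{2}d_1(g)z_2^2e_1$ on $\B^n$, a normalized polynomial biholomorphism onto its image. A direct computation gives $[DF(z)]^{-1}F(z)=z-\tfrac{3\sqrt3}{2}d_1(g)z_2^2e_1$, so $\langle [DF(z)]^{-1}F(z),z\rangle/\|z\|^2=1-\tfrac{3\sqrt3}{2}d_1(g)\,z_2^2\bar z_1/\|z\|^2$; since $|z_1|\,|z_2|^2/\|z\|^2<\tfrac{2\sqrt3}{9}$ for all $z\in\B^n$, this value lies in $B(1,d_1(g))\subseteq g(\U)$, hence $F=\widetilde F_{1,2}[g]\in S_g^*(\B^n)\subseteq S_g^0(\B^n)$, while $\tfrac12\partial_{z_2}^2F_1(0)=\tfrac{3\sqrt3}{2}d_1(g)$ shows the estimate above is sharp. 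Finally, let $L\colon H(\B^n)\to\C$ be the continuous linear functional $L(G):=\tfrac12\partial_{z_2}^2G_1(0)$. By the coefficient bound, $\Re L(G)\le|L(G)|\le\tfrac{3\sqrt3}{2}d_1(g)=L(\widetilde F_{1,2}[g])$ for every $G\in S_g^0(\B^n)$, so $\widetilde F_{1,2}[g]$ maximizes $\Re L$ over the compact set $S_g^0(\B^n)$; moreover $\Re L$ is non-constant there, since $L$ vanishes at the identity map while $d_1(g)>0$. Hence $\widetilde F_{1,2}[g]$ is a support point of $S_g^0(\B^n)$, and as it lies in $S_g^*(\B^n)\subseteq S_g^0(\B^n)$ the same functional also exhibits it as a support point of $S_g^*(\B^n)$. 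The remaining maps $\widetilde F_{i,j}[g]$ follow by permuting coordinates and, for the $-$ sign, replacing $L$ by $-L$.
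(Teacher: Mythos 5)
Your overall architecture — reduce to a coefficient bound for $\mathcal{M}_g(\B^2)$ via the Loewner ODE for the $z_2^2$-coefficient, prove that bound by averaging $\langle h(z),z\rangle/\|z\|^2$ over circles and invoking convexity of $g(\U)$ together with the inclusion of the disc of radius $d_1(g)$ about $1$, then get sharpness from the explicit $g$-starlike map and the support property from the functional $L(G)=\tfrac12\partial_{z_2}^2G_1(0)$ — is exactly the route the paper takes (it proves the rank $r\ge 2$ case in Propositions 3.2, 3.3 and Theorems 4.2, 4.4, and states that the Euclidean case follows by the same arguments with the constant $\tfrac{3\sqrt3}{2}$ coming from maximizing $a\rho^2/(a^2+\rho^2)$). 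The ODE reduction, the optimal configuration $a=1/\sqrt3$, $\rho^2=2/3$, the verification that $[DF]^{-1}F\in\mathcal{M}_g(\B^n)$, and the support-point argument are all correct.

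However, the one step you explicitly leave unverified — "the delicate part is to check that \ldots all lower- and higher-order contributions genuinely drop out" — is a genuine gap, because with the scheme you describe they do \emph{not} drop out. If you fix $z_1=a>0$ and average $u(a,\rho e^{i\theta})$ against $\tfrac{1}{2\pi}(1+\cos(2\theta-\gamma))$, the frequency-$2$ Fourier coefficient of $u$ is $\frac{1}{a^2+\rho^2}\bigl(a\rho^2\sum_{k\ge0}q^1_{k,2}a^{k}+\rho^4\sum_{k\ge0}q^2_{k,3}a^{k}\rho^{k}\bigr)$, so the coefficients $q^1_{1,2},q^1_{2,2},\dots$ and $q^2_{k,3}$ contaminate the average. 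Passing instead to the full torus $(z_1,z_2)=(ae^{i\theta_1},\rho e^{i\theta_2})$ with the nonnegative weight $1+\cos(\theta_1-2\theta_2-\gamma)$ does isolate $q^1_{0,2}$, but only with the Fourier weight $\tfrac12$, which yields $|q^1_{0,2}|\le 3\sqrt3\,d_1(g)$ — off by a factor of $2$. The device that makes the argument work (and is what the paper uses in its Proposition 3.2) is to rotate \emph{both} coordinates along the coupled circle $z_1=ae^{i(\eta+\theta-\lambda)}$, $z_2=\rho e^{i\theta/2}$, $\theta\in[0,4\pi)$, and take the plain (uniform) average: the monomial $q^1_{0,2}\,\overline{z_1}z_2^2$ is constant along this circle and equals $|q^1_{0,2}|a\rho^2e^{i\lambda}$, while every other monomial $\overline{z_k}z^\alpha$ occurring in $\|z\|^2(u-1)$ carries a nonzero frequency $e^{i\theta(2\alpha_1+\alpha_2-2\delta_{k1}-\delta_{k2})/2}$ and averages to zero. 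This gives $1+\frac{a\rho^2}{a^2+\rho^2}|q^1_{0,2}|e^{i\lambda}\in\overline{g(\U)}$ for all $\lambda$, with no factor $\tfrac12$, and then your optimization over $(a,\rho)$ completes the proof exactly as you intended. With this substitution your argument is complete and coincides with the paper's.
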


\begin{theorem}
\label{t0.2}
Let $B$ be the unit ball of $\C^n$ with respect to an arbitrary norm on $\C^n$.
Let $g:\U\to\C$ be a univalent holomorphic function such that $g(0)=1$,
$g(\overline{\zeta})=\overline{g(\zeta)}$, 
and 
$\Re g(\zeta)>0$, $\zeta\in\U$. 
Assume that
$g(\rho)=O(1-\rho)$ as $\rho\to 1-0$.
Then there exists an unbounded support point for $S_g^0(B)$.
\end{theorem}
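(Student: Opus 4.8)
The plan is to produce one explicit unbounded $g$-starlike mapping on $B$ and to show that it maximizes the real part of a coefficient functional over $S_g^0(B)$. Fix $e_1\in\C^n$ with $\|e_1\|=1$ and, by Hahn--Banach, a functional $\ell\in(\C^n)^*$ with $\|\ell\|=1$ and $\ell(e_1)=1$; then $|\ell(z)|\le\|z\|<1$ on $B$, so $\ell(B)\subseteq\U$. I would work with the continuous linear functional
\[
L(\psi)=\tfrac12\,\tfrac{d^2}{d\zeta^2}\big[\ell\big(\psi(\zeta e_1)\big)\big]\big|_{\zeta=0},\qquad\psi\in H(B).
\]

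The first step is the one-variable ``$g$-Koebe function''
\[
F_g(\zeta)=\zeta\exp\!\left(\int_0^\zeta\Big(\tfrac1{g(s)}-1\Big)\frac{ds}{s}\right),\qquad\zeta\in\U,
\]
which is well defined since $\Re g>0$ gives $g\neq0$ and $\tfrac1g-1$ vanishes at $0$. It is normalized with $\zeta F_g'(\zeta)/F_g(\zeta)=1/g(\zeta)$, hence $\Re\big(\zeta F_g'(\zeta)/F_g(\zeta)\big)>0$, so $F_g$ is starlike; the chain $\{e^tF_g\}_{t\ge0}$ has generating (Herglotz) function identically equal to $g$, hence subordinate to $g$, so $F_g$ is $g$-starlike and $F_g\in S_g^0(\U)$. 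From $g(\overline\zeta)=\overline{g(\zeta)}$ one has $g(s)>0$ for $s\in(0,1)$, so $|F_g(\rho)|=\rho\exp\int_0^\rho\big(\tfrac1{g(s)}-1\big)\frac{ds}{s}$ for $\rho\in(0,1)$; the hypothesis $g(\rho)=O(1-\rho)$ gives $\tfrac1{g(s)}\ge\tfrac{c}{1-s}$ near $1$, so this integral diverges to $+\infty$ and $F_g$ is unbounded. Moreover $F_g(\zeta)=\zeta-g'(0)\zeta^2+\cdots$, so $a_2(F_g)=-g'(0)$.

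Next, on $B$ set
\[
f(z)=\varphi(\ell(z))\,z,\qquad\varphi(\zeta)=\exp\!\left(\int_0^\zeta\Big(\tfrac1{g(s)}-1\Big)\frac{ds}{s}\right)=\frac{F_g(\zeta)}{\zeta}.
\]
Then $f$ is normalized, and univalent: $f(z)=f(w)$ forces $F_g(\ell(z))=F_g(\ell(w))$, hence $\ell(z)=\ell(w)$ by univalence of $F_g$, hence $z=w$. Writing $Df(z)w=\varphi(\ell(z))\big(w+\beta(z)\ell(w)z\big)$ with $\beta(z)=\big(\tfrac1{g(\ell(z))}-1\big)/\ell(z)$ (removable singularity at $\ell(z)=0$ filled in) and noting $1+\beta(z)\ell(z)=1/g(\ell(z))\neq0$, the Sherman--Morrison inversion formula gives, after a short computation,
\[
\big[Df(z)\big]^{-1}f(z)=g(\ell(z))\,z=:h(z),\qquad z\in B.
\]
Here $h(0)=0$, $Dh(0)=I$, and for every $z\in B\setminus\{0\}$ and every support functional $\ell_z$ of $z$ one has $\ell_z(h(z))/\|z\|=g(\ell(z))\in g(\U)$; thus $h\in\mathcal M_g(B)$, i.e. $f$ is $g$-starlike on $B$, and since $\{e^tf\}_{t\ge0}$ is then a $g$-Loewner chain (normalized; $e^sf\prec e^tf$ because $f$ is starlike; generating field the constant $h\in\mathcal M_g(B)$) with $\{e^{-t}(e^tf)\}=\{f\}$ normal, we get $f\in S_g^0(B)$. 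Finally $f(\zeta e_1)=F_g(\zeta)e_1$, so $\|f(\rho e_1)\|=|F_g(\rho)|\to\infty$ as $\rho\to1^-$: $f$ is unbounded.

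It remains to check that $f$ is a support point for $L$. Since $\ell(f(\zeta e_1))=F_g(\zeta)$ we get $L(f)=a_2(F_g)=-g'(0)\neq0=L(\mathrm{id}_B)$, so $L$ is non-constant on $S_g^0(B)$. Given $\psi\in S_g^0(B)$, choose a $g$-Loewner chain $\psi(z,t)$ with $\psi=\psi(\cdot,0)$ and generating field $\widetilde h(\cdot,s)\in\mathcal M_g(B)$; comparing the $z_1^2$-coefficients on the two sides of $\tfrac{\partial\psi}{\partial t}(z,t)=D_z\psi(z,t)\,\widetilde h(z,t)$ and using normality of $\{e^{-t}\psi(\cdot,t)\}$ yields, exactly as in the one-variable case, $L(\psi)=-\int_0^\infty e^{-s}q_s'(0)\,ds$ with $q_s(\zeta):=\ell(\widetilde h(\zeta e_1,s))/\zeta$. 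Since $\tfrac{\overline\zeta}{|\zeta|}\ell$ is a support functional of $\zeta e_1$, membership $\widetilde h(\cdot,s)\in\mathcal M_g(B)$ forces $q_s(\U)\subseteq g(\U)$ and $q_s(0)=1=g(0)$, so $q_s\prec g$ and $|q_s'(0)|\le|g'(0)|$ by the Schwarz lemma; hence $|L(\psi)|\le|g'(0)|=|L(f)|$. Choosing $\lambda$ unimodular with $\lambda L(f)=|g'(0)|$ we obtain $\Re[\lambda L(\psi)]\le\Re[\lambda L(f)]$ for all $\psi\in S_g^0(B)$, so $f$ is the desired unbounded support point. I expect the one genuinely new algebraic point to be the identity $[Df(z)]^{-1}f(z)=g(\ell(z))\,z$: the generating vector field is a scalar multiple of the identity map, which is precisely why the construction is insensitive to the shape of $\partial B$ and works for an arbitrary norm; the main technical step is the several-variable coefficient estimate $|L(\psi)|\le|g'(0)|$ on $S_g^0(B)$, while the asymptotics $F_g(\rho)\to\infty$ under $g(\rho)=O(1-\rho)$ are elementary.
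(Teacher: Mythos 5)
Your proposal is correct and follows essentially the same route as the paper: the same extremal mapping $f(z)=\frac{F_g(\ell(z))}{\ell(z)}\,z$ (the paper's $f^{e_1}$ built from the one-variable $g$-starlike function $b=F_g$), the same second-order coefficient functional along the $e_1$-direction, and the same growth argument from $g(\rho)=O(1-\rho)$ for unboundedness. The only cosmetic difference is that you re-derive the bound $|L(\psi)|\le|g'(0)|$ via the Loewner ODE and the verification $[Df(z)]^{-1}f(z)=g(\ell(z))z$ by hand, whereas the paper cites its Theorem \ref{t.coeff2} (Remark \ref{second-remark}) and \cite[Lemma 3.1]{HH08} for these facts.
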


\section{Preliminaries}
\setcounter{equation}{0}
First, we give the relation of $a_0(g)$ and ${\rm dist}(1,\partial g(\U))$ for functions $g$ which satisfy
the conditions of Assumption \ref{assump}.

\begin{proposition}
\label{p.a0}
Let $g:\U\to\C$ be a convex $($univalent$)$ function, which satisfies the conditions of Assumption $\ref{assump}$.
Also, let $a_0(g)$ be given by $(\ref{tilde-a0})$. Then we have
$a_0(g)\geq {\rm dist}(1,\partial g(\mathbb{U}))$.
\end{proposition}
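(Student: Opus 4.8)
The plan is to read $d_1:={\rm dist}(1,\partial g(\U))$ as the radius of the largest open disc centered at $1=g(0)$ that is contained in the image domain $g(\U)$, and then, for each fixed $\rho\in(0,1)$, to bound from below the numerator $\min\{|1-g(\rho)|,\,|g(-\rho)-1|\}$ occurring in the definition $(\ref{tilde-a0})$ of $a_0(g)$ by $\rho\,d_1$. Taking the infimum over $\rho$ then gives the assertion at once.

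First I would check that $d_1$ is a finite positive real number and set up the comparison disc. Since $\Re g>0$, the domain $g(\U)$ lies in the right half-plane, so it omits boundary points and $d_1<\infty$; since $1=g(0)$ is an interior point of the open set $g(\U)$, we have $d_1>0$, and, by the elementary fact that a point of an open set lies at distance at most its boundary-distance inside that set, the open disc $\Delta:=\{w\in\C:|w-1|<d_1\}$ is contained in $g(\U)$. Because $g$ is univalent, $g^{-1}$ is a single-valued holomorphic function on $g(\U)\supseteq\Delta$, so the map $\omega:\U\to\U$, $\omega(\zeta):=g^{-1}(1+d_1\zeta)$, is a well-defined holomorphic self-map of $\U$ with $\omega(0)=g^{-1}(1)=0$; the Schwarz lemma then yields $|\omega(\zeta)|\le|\zeta|$ for every $\zeta\in\U$.

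The main step is to prove $|g(\rho)-1|\ge\rho\,d_1$, and likewise $|g(-\rho)-1|\ge\rho\,d_1$, for every $\rho\in(0,1)$. Fix $\rho$ and distinguish two cases. If $g(\rho)\notin\Delta$, then $|g(\rho)-1|\ge d_1\ge\rho\,d_1$ immediately. If $g(\rho)\in\Delta$, put $\zeta_0:=(g(\rho)-1)/d_1\in\U$; then $1+d_1\zeta_0=g(\rho)$, hence $\omega(\zeta_0)=g^{-1}(g(\rho))=\rho$, and the Schwarz estimate gives $\rho=|\omega(\zeta_0)|\le|\zeta_0|=|g(\rho)-1|/d_1$, i.e. $|g(\rho)-1|\ge\rho\,d_1$. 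Running the same two-case argument at the point $-\rho$ gives $|g(-\rho)-1|\ge\rho\,d_1$. Consequently $\min\{|1-g(\rho)|,\,|g(-\rho)-1|\}\ge\rho\,d_1$ for every $\rho\in(0,1)$, so each term inside the infimum in $(\ref{tilde-a0})$ is $\ge d_1$, and passing to the infimum yields $a_0(g)\ge d_1={\rm dist}(1,\partial g(\U))$.

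I do not expect a genuine obstacle here: the only points that call for a sentence of care are the inclusion $\Delta\subseteq g(\U)$ and the global single-valuedness of $g^{-1}$ on $g(\U)$, and it is precisely the univalence of $g$ — not its convexity — that is used for the latter; the rest of the argument is a single application of the Schwarz lemma.
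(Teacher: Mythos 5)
Your proof is correct, and it reaches the same key inequality as the paper --- namely $|g(\pm\rho)-1|\geq \rho\, d_1$ for all $\rho\in(0,1)$, where $d_1={\rm dist}(1,\partial g(\U))$ --- but by a genuinely different route. The paper works with the difference quotient $h(\zeta)=(g(\zeta)-1)/\zeta$ (with $h(0)=g'(0)$), uses the convexity of $g(\U)$ to invoke a boundary-extension theorem for conformal maps onto convex domains, identifies ${\rm dist}(1,\partial g(\U))$ with $\inf_{\zeta\in\partial\U}|h(\zeta)|$, and then applies the maximum principle to $1/h$ (legitimate since univalence forces $h\neq 0$ on $\U$) to conclude $\inf_{\overline{\U}}|h|=\inf_{\partial\U}|h|$. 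You instead inscribe the disc $\Delta$ of radius $d_1$ about $1$ inside $g(\U)$ and apply the Schwarz lemma to $\omega(\zeta)=g^{-1}(1+d_1\zeta)$, handling the trivial case $g(\pm\rho)\notin\Delta$ separately; this avoids all boundary considerations and, as you observe, uses only the univalence of $g$ and the normalization $g(0)=1$ (finiteness of $d_1$ being guaranteed by $\Re g>0$, or indeed by univalence alone). Your argument is therefore slightly more elementary and strictly more general --- it proves the proposition without the convexity hypothesis --- whereas the paper's argument yields, along the way, the identity $\inf_{\overline{\U}}|h|=\inf_{\partial\U}|h|={\rm dist}(1,\partial g(\U))$, which pinpoints where the infimum defining $a_0(g)$ can fail to be attained on the real axis.
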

\begin{proof}
Let $a_0=a_0(g)$.
We note that the relation $(\ref{tilde-a0})$ is equivalent to
$a_0=\inf_{r\in(-1,1)}\left|h(r)\right|$, where $h:\U\to\C$ is given by
\[
h(\z)=
\left\{
\begin{array}{ll}
\frac{g(\z)-1}{\z},&\z\in\U\setminus\{0\}
\\
g'(0),&\z=0.
\end{array}
\right.
\]
Since $g(\mathbb{U})$ is convex, we deduce, by \cite[Proposition 5.6]{Pom1},
that $g$ admits a continuous extension $g:\overline{\mathbb{U}}\to \mathbb{C}
\cup\{\infty\}$. Then also $h$ admits a continuous extension $h:\overline{\mathbb{U}}\to \mathbb{C}
\cup\{\infty\}$ and $h(\z)\neq 0$, $\z\in {\U}$, since $g$ is univalent.
Using the maximum principle to $1/h$, we deduce:
\begin{eqnarray*}
{\rm dist}(1,\partial g(\mathbb{U}))&=&\inf_{\z\in\partial\mathbb{U}}|g(\z)-g(0)|
\\
&=&
\inf_{\z\in\partial\mathbb{U}}|h(\z)|=
\inf_{\z\in\ov{\U}}|h(\z)|
\\
&\leq&
 \inf_{r\in(-1,1)}|h(r)|=a_0.
\end{eqnarray*}
Hence ${\rm dist}(1,\partial g(\mathbb{U}))\le a_0$, as claimed.
This completes the proof.
\end{proof}

Let $X$ and $Y$ be complex Banach spaces.
Let $L(X,Y)$ denote the family of continuous linear operators from $X$ to $Y$.
The family $L(X,X)$ is denoted by $L(X)$, and the identity in $L(X)$ is denoted by $I_X$.
Also, let $\B_X$ be the open unit ball of $X$, and let $H(\B_X)$
be the family of holomorphic mappings from $\B_X$ into $X$.
Let $Df(x)$ denote the Fr\'{e}chet derivative of $f\in H(\B_X)$.
A mapping $f\in H(\B_X)$ is said to be normalized if $f(0)=0$ and $Df(0)=I_X$.
Let ${\mathcal L}S(\B_X)$ be the family of normalized locally
biholomorphic mappings $f\in H(\B_X)$, and
let $S(\B_X)$ be the family of normalized biholomorphic mappings $f\in H(\B_X)$.
The family $S(\U)$ is denoted by $S$, where $\U$ is the unit disc in $\C$.
Also, let $S^*(\B_X)$ (resp. $K(\B_X)$) be the subset of $S(\B_X)$ consisting of
starlike mappings (resp. convex mappings) on $\B_X$,
where a mapping $f\in S(\B_X)$ is said to be  starlike
(respectively, convex)
if $f(\B_X)$ is a starlike
(respectively, convex) domain in $X$.

\begin{definition}
(see e.g. \cite{GK03})
\label{d-subordination}
Let $X$ be a complex Banach space and let
$\B_X$ be the open unit ball of $X$.


(i) A mapping $f:\B_X\times [0,\infty )\to X$ is called a Loewner chain
if $e^{-t}f(\cdot,t)\in S(\B_X)$ for $t\geq 0$, and
and $f(\B_X,s)\subseteq f(\B_X,t)$, $0\leq s\leq t<\infty$.

(ii) Let $f=f(z,t):\B_X\times [0,\infty)\to X$ be a Loewner chain.
Then there exists a unique biholomorphic Schwarz mapping $v=v(\cdot,s,t)$,
called the transition mapping associated with $f(z,t)$, such that
$f(z,s)=f(v(z,s,t),t)$ for $z\in\B_X$ and $t\geq s\geq 0$.
Also, $Dv(0,s,t)=e^{s-t}I_X$ for $t\geq s\geq 0$. The family
$(v_{s,t})_{t\geq s}$ is also called the evolution family associated
with $f(z,t)$, where $v_{s,t}(\cdot)=v(\cdot,s,t)$.
\end{definition}

For every $z\in X\setminus\{0\}$, let
$$T(z)=\Big\{l_z\in L(X,\C): \|l_z\|=1, l_z(z)=\|z\|\Big\}.$$
The family $T(z)\neq\emptyset$, in view of the Hahn-Banach theorem.

Next, we recall the Carath\'eodory family ${\mathcal M}={\mathcal M}(\B_X)$ in $H(\B_X)$
(see \cite{Su}):
$$\mathcal{M}(\B_X)=\big\{h\in H(\B_X):h(0)=0, Dh(0)=I_X,$$
$$\Re l_z(h(z))>0,
z\in \B_X\setminus\{0\}, l_z\in T(z)\big\}.$$

If $X=\mathbb{C}$, it is clear that $f\in \mathcal{M}(\U)$ if and only
if $f(z_1)/z_1\in\mathcal{P}$, where
$$\mathcal{P}=\big\{p\in H(\U): p(0)=1, \Re p(z_1)>0, z_1\in \U\big\}$$
is the Carath\'eodory family on the unit disc $\U$.

The family ${\mathcal M}(\B_X)$ occurs in the study of
various problems regarding univalent mappings in $\C^n$ and complex
Banach spaces, as well as in Loewner's theory in higher dimensions
(see \cite{ABW}, \cite{Br}, \cite{BR},
\cite{DGHK}, \cite{ERS}, \cite{GHK02}, \cite{GHKK12}, \cite{GHKK13}, \cite{GK03},
\cite{M18}, \cite{Pf74}, \cite{Por1}, \cite{Por2}, \cite{ReSh}, \cite{Roth-pontryagin},
\cite{S}, \cite{Su}, \cite{Vo}).

\begin{definition}
\label{d.Ag-class}
(see e.g. \cite{GHK02} and \cite{GHK14})
Let $g:\U\to\mathbb{C}$ be a univalent holomorphic function such that $g(0)=1$ and
$\Re g(\z)>0$, $\z\in\U$. Also, let $h:\B_X\to X$ be a
normalized holomorphic mapping. We say that $h$ belongs to the
family ${\mathcal M}_g(\B_X)$ if
\[
\frac{1}{\|z\|}l_z(h(z))\in g(\U), \quad z\in
\B_X\setminus\{0\}, \quad l_z\in T(z).
\]
\end{definition}

\begin{remark}
\label{remark-g}
Since $h$ is normalized,
$g(0)$ should be $1$ so that ${\mathcal M}_g(\B_X)$ is well-defined.
Also, since $\Re g(\z)>0$, $\z\in\U$,
we have
${\mathcal M}_g(\B_X)\subset {\mathcal M}(\B_X)$.
\end{remark}

\begin{definition}
\label{g-starlike} (see \cite{HH08})
Let $g:\U\to\C$ be a univalent holomorphic
function such that $g(0)=1$ and $\Re g(\zeta)>0$, $\z\in\U$.
Also, let $f\in {\mathcal L}S(\B_X)$. The mapping $f$ is said to be
$g$-starlike (denoted by $f\in S_g^*(\B_X)$) if $h\in {\mathcal M}_g(\B_X)$,
where $h(z)=[Df(z)]^{-1}f(z)$, $z\in\B_X$.
\end{definition}

\begin{remark}
\label{r.examples}
Various choices of the function $g$ in the above definition
provide important subsets of ${\mathcal M}(\B_X)$ and $S^*(\B_X)$.

(i) Let $\alpha\in [0,1)$ and $g(\zeta)=\frac{1-\zeta}{1+(1-2\alpha)\zeta}$,
$|\zeta|<1$. In this case, the family ${\mathcal M}_g(\B_X)$ will be
denoted by ${\mathcal M}_\alpha(\B_X)$, while the family $S_g^*(\B_X)$
will be denoted by $S_\alpha^*(\B_X)$. Note that $S_\alpha^*(\B_X)$
is the usual family of starlike mappings of order $\alpha$ on $\B_X$
(see e.g. \cite[Chapter 6]{GK03}). It is
known that $K(\B_X)\subseteq S_{1/2}^*(\B_X)$ (see \cite{RS}).

(ii) Let $\alpha\in [0,1)$ and $g(\zeta)=\frac{1-(1-2\alpha)\zeta}{1+\zeta}$,
$|\zeta|<1$. In this case,
the family $S_g^*(\B_X)$
will be denoted by ${\mathcal A}S_\alpha^*(\B_X)$.
The family ${\mathcal M}_g(\B_X)$ is related to the
family ${\mathcal A}S_\alpha^*(\B_X)$ of almost starlike mappings of order
$\alpha$ on $\B_X$ (see \cite{XL}). More precisely, if $f\in {\mathcal L}S(\B_X)$,
then $f\in {\mathcal A}S_\alpha^*(\B_X)$ if $h\in {\mathcal M}_g(\B_X)$, where
$h(z)=[Df(z)]^{-1}f(z)$, $z\in\B_X$ (see \cite{XL}).
It is clear that ${\mathcal A}S_\alpha^*(\B_X)\subseteq S^*(\B_X)$.

(iii) Let $\alpha\in (0,1]$ and $g(\zeta)=\left(\frac{1-\zeta}{1+\zeta}\right)^{\alpha}$,
$\zeta\in\U$. We choose the branch of the power function such that $g(0)=1$. In this case,
the family $S_g^*(\B_X)$
will be denoted by $SS_\alpha^*(\B_X)$.
The family ${\mathcal M}_g(\B_X)$ is connected with the family $SS_\alpha^*(\B_X)$
of strongly starlike mappings
of order $\alpha$ (see \cite{HH08}). If $f\in {\mathcal L}S(\B_X)$, then $f\in SS_\alpha^*(\B_X)$
if $h\in {\mathcal M}_g(\B_X)$, where $h(z)=[Df(z)]^{-1}f(z)$, $z\in\B_X$ (see \cite{HH08}).
Clearly, $SS_\alpha^*(\B_X)\subseteq S^*(\B_X)$.
\end{remark}

\begin{definition}
(see e.g. \cite{C12}, \cite{K83} and \cite{L})
A complex Banach space $X$ is called a JB$^*$-triple
if $X$ is a complex Banach space equipped with a continuous
Jordan triple product
\[
X\times X\times X \to X \qquad (x,y,z)\mapsto \{ x, y, z\}
\]
satisfying
\begin{enumerate}
\item[$(\mbox{J}_1)$]
$\{ x, y, z\}$ is symmetric bilinear in the outer variables, but conjugate linear in the middle variable,
\item[$(\mbox{J}_2)$]
$\{ a, b, \{ x, y, z\}\}=\{\{a, b, x\}, y, z\}-\{x, \{b, a, y\}, z\}+\{x, y, \{ a, b, z\}\}$,
\item[$(\mbox{J}_3)$]
$x\square x\in {L}(X)$ is a hermitian operator with spectrum $\geqq 0$,
\item[$(\mbox{J}_4)$]
$\| \{x, x, x\}\|=\| x\|^3$
\end{enumerate}
for $a,b,x,y, z\in X$, where the {\it box operator} $x\square y :X \to X$  is defined by
$x\square y( \cdot) = \{ x, y, \cdot\}$ and $\| \cdot \|$ is the norm on $X$.
\end{definition}

An element $u$ in a JB*-triple $ X$
is called a {tripotent} if $\{ u, u, u\}=u$.
Two tripotents $u$ and $v$ are said to be
{orthogonal} if $D(u,v)=0$,
where $D(u,v)=2u \square v$.
Orthogonality is a symmetric relation.
A tripotent $u$ is said to be {maximal} if the only
 tripotent which is orthogonal to $u$ is $0$.
A tripotent $u$ is said to be primitive if it cannot be
written as a sum of two non-zero orthogonal tripotents.
A frame is a maximal family of pairwise orthogonal,
primitive tripotents.
The cardinality of all frames is the same,
and is called the rank $r$ of $X$.

From now on, throughout this paper, we assume that $X$ is a finite dimensional
complex Banach space.

\begin{definition}
(cf. \cite{BCM} and \cite{DGHK})
\label{d.vector-field}
A mapping $h=h(z,t):\B_X\times [0,\infty)\to X$ is called a
generating vector field (Herglotz vector field) if the following
conditions hold:
\begin{enumerate}
\item[(i)]
$h(\cdot,t)\in {\mathcal M}(\B_X)$, for all $t\geq 0$;
\item[(ii)]
$h(z,\cdot)$ is measurable on $[0,\infty)$, for all $z\in\B_X$.
\end{enumerate}
\end{definition}

\begin{remark}
\label{r.loewner-eq}
(i) Let $f=f(z,t):\B_X\times [0,\infty)\to X$ be a Loewner chain. Then there exists a Herglotz
vector field $h=h(z,t):\B_X\times [0,\infty)\to X$
such that $f(z,t)$ is a solution of the Loewner differential equation
(see \cite{GHK02}; see e.g. \cite[Chapter 8]{GK03})
\begin{equation}
\label{loewner}
\frac{\partial f}{\partial t}(z,t)=Df(z,t)h(z,t),\quad \mbox{ a.e. }\quad t\geq 0,\quad
\forall\,z\in \B_X.
\end{equation}

(ii) Conversely, if $h=h(z,t):\B_X\times [0,\infty)\to X$ is a Herglotz vector field, then
every univalent solution $f(z,t)$ of the Loewner differential
equation (\ref{loewner}) is a Loewner chain
(see \cite{DGHK}; see e.g. \cite[Chapter 8]{GK03}).
\end{remark}

Now, we recall the notion of a $g$-Loewner chain and $g$-parametric representation on
the open unit ball $\B_X$ (see \cite{GHK02}; see also \cite{GHK14}).

\begin{definition}
\label{d.g-Loewner}
Let $f=f(z,t):\B_X\times [0,\infty)\to X$ be a normal Loewner chain, i.e. $f(z,t)$ is a Loewner
chain such that $\{e^{-t}f(\cdot,t)\}_{t\geq 0}$ is a normal family on $\B_X$.
Also, let $g:\U\to\C$ satisfy the conditions of Assumption \ref{assump}.
We say that $f(z,t)$ is a $g$-Loewner chain if $h(\cdot,t)\in {\mathcal M}_g(\B_X)$, for a.e.
$t\in [0,\infty)$,
where $h=h(z,t)$ is the Herglotz vector field given by (\ref{loewner}).
\end{definition}

\begin{definition}
\label{d.g-param}
Let $g:\U\to\C$ satisfy the conditions of Assumption \ref{assump}.
Also, let $f\in H(\B_X)$ be a normalized mapping. The mapping
$f$ is said to have $g$-parametric representation (denoted by $f\in S_g^0(\B_X)$) if
there exists
a Herglotz vector field $h:\B_X\times [0,\infty)\to X$ such that
$h(\cdot,t)\in {\mathcal M}_g(\B_X)$ for a.e. $t\geq 0$, and
$f(z)=\displaystyle\lim_{t\to\infty}e^{t}v(z,t)$
locally uniformly
on $\B_X$, where $v(z,t)=v(z,0,t)$ and $v(z,s,t)$ is the unique locally Lipschitz
continuous solution on $[s,\infty)$ of the initial value problem
\begin{equation}
\label{2.1}
\frac{\partial v}{\partial t}=-h(v,t)\quad \mbox{ a.e. }\quad
t\geq s,\quad v(z,s,s)=z,
\end{equation}
for all $z\in \B_X$ and $s\geq 0$.

If $\alpha\in [0,1)$ and
$g(\z)=\frac{1-\z}{1+(1-2\alpha)\z}$, $\z\in\U$, then the family $S_g^0(\B_X)$ will
be denoted by $S_\alpha^0(\B_X)$, while if $g(\z)=\frac{1-(1-2\alpha)\z}{1+\z}$, $\z\in\U$,
then the family $S_g^0(\B_X)$ will be denoted by ${\mathcal A}S_\alpha^0(\B_X)$. Also,
if $\alpha\in (0,1]$ and $g(\zeta)=\left(\frac{1-\zeta}{1+\z}\right)^\alpha$, $\zeta\in\U$,
then the family $S_g^0(\B_X)$ will be denoted by $SS_\alpha^0(\B_X)$.
\end{definition}

\begin{remark}
Various choices of the function $g$ in the above definition provides important subsets of
$S^0(\B_X)$.

(i) Assume that $g:\U\to\C$ satisfies the conditions of Assumption \ref{assump}. Then
$S_g^0(\B_X)\subseteq S^0(\B_X)$, and if $g(\zeta)=\frac{1-\zeta}{1+\zeta}$,
$|\zeta|<1$, then $S_g^0(\B_X)=S^0(\B_X)$.

(ii) Let $f\in H(\B_X)$ be a normalized mapping. Then $f\in S_g^0(\B_X)$
if and only if there exists a $g$-Loewner chain $f(z,t)$ such that $f=f(\cdot,0)$
(see \cite[Theorem 1.4, Lemma 1.6]{GHK02}).

(iii) Clearly, $S_g^*(\B_X)\subseteq S_g^0(\B_X)$. Indeed, $f\in S_g^*(\B_X)$ if and
only if $f(z,t)=e^tf(z)$ is a $g$-Loewner chain (\cite{GHK02}; see \cite{Por1}, in the case
$g(\zeta)=\frac{1-\zeta}{1+\zeta}$, $\zeta\in\U$).

(iv) Let $f\in {\mathcal L}S(\B_X)$ and $\alpha\in [0,1)$. Then $f$ is starlike of order $\alpha$ on $\B_X$
if and only if $f(z,t)=e^tf(z)$ is a $g$-Loewner chain, where $g(\zeta)=\frac{1-\zeta}{1+(1-2\alpha)\zeta}$,
for $\zeta\in \U$. Hence $S_\alpha^*(\B_X)\subseteq S_\alpha^0(\B_X)$ (see e.g. \cite{GHK02}).

(v) Let $f\in {\mathcal L}S(\B_X)$ and $\alpha\in [0,1)$. Then $f\in {\mathcal A}S_\alpha^*(\B_X)$
if and only if $f(z,t)=e^tf(z)$ is a $g$-Loewner chain, where $g(\zeta)=\frac{1-(1-2\alpha)\zeta}{1+\zeta}$, $\zeta\in\U$
(cf. \cite{XL}).

(vi) Let $f\in {\mathcal L}S(\B_X)$ and $\alpha\in (0,1]$. Then $f\in SS_\alpha^*(\B_X)$ if and only if
$f(z,t)=e^tf(z)$ is a $g$-Loewner chain, where $g(\zeta)=\left(\frac{1-\zeta}{1+\zeta}\right)^\alpha$, $\zeta\in\U$.
\end{remark}

\begin{definition}
\label{d.support}
Let $E\subseteq H(\B_X)$ be a nonempty set.

(i) A point $f\in E$ is called a support point of $E$
(denoted by $f\in {\rm supp}\,E$) if there is a
continuous linear functional
$\Lambda:H(\B_X)\to \C$ such that $\Re \Lambda|_E\neq$ constant and
$\Re \Lambda(f)=\max\{\Re \Lambda(q): q\in E\}$.

(ii) A point $f\in E$ is called an extreme point of $E$ (denoted by
$f\in {\rm ex}\,E$)
if $f=\lambda g+(1-\lambda)h$, for some $\lambda\in (0,1)$, $g,h\in E$, implies
that $g=h$.
\end{definition}

From now on, throughout this paper,
we assume that  $\B_X$ is the unit ball of an $n$-dimensional JB$^*$-triple $X=(\C^n, \| \cdot\|)$.
Let $h_0$ be the Bergman metric
on the unit ball $\B_X$ of an $n$-dimensional JB$^*$-triple $X=(\C^n, \| \cdot\|)$ at $0$.
We obtain the following lemma (cf. \cite[Lemma 2.1]{H18}).

\begin{lemma}
\label{estimate-trace}
Let $\B_X$ be a bounded symmetric domain
realized as the open unit ball of a JB$^*$-triple $X=(\C^n, \| \cdot\|)$,
and let $e$ be an arbitrary tripotent in $X$.
Then we have
\[
|h_0(x,e)|\leq \| x\| h_0(e,e),
\quad x\in X.
\]
\end{lemma}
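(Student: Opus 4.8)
\textbf{Proof proposal for Lemma~\ref{estimate-trace}.}
The plan is to use the spectral structure of the finite dimensional JB$^*$-triple $X$ together with the explicit form of the Bergman metric at $0$. First I would recall that since $X$ is a finite dimensional JB$^*$-triple, the Bergman metric $h_0$ at the origin is (up to a positive scalar depending only on $X$) the inner product for which $X$ decomposes orthogonally into the Peirce spaces of any fixed frame, and with respect to which distinct members of a frame are orthogonal and any tripotent has a controlled ``length''. More precisely, I would reduce to the case where $e$ is a nonzero tripotent and work in the Peirce decomposition $X = X_1(e)\oplus X_{1/2}(e)\oplus X_0(e)$ determined by $e$. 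The key point is that $h_0$ restricted to $X_1(e)$, $X_{1/2}(e)$, $X_0(e)$ is block-diagonal (these Peirce spaces are mutually $h_0$-orthogonal), and $e$ itself lies in $X_1(e)$.

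Next I would establish the pointwise inequality by a Cauchy--Schwarz argument adapted to the geometry. Writing $x = x_1 + x_{1/2} + x_0$ in the Peirce decomposition of $e$, orthogonality gives $h_0(x,e) = h_0(x_1,e)$, so it suffices to bound $|h_0(x_1,e)|$ by $\|x\|\,h_0(e,e)$. By Cauchy--Schwarz for the positive definite form $h_0$, $|h_0(x_1,e)| \le h_0(x_1,x_1)^{1/2} h_0(e,e)^{1/2}$, and the remaining task is to show $h_0(x_1,x_1)^{1/2} \le \|x\|\, h_0(e,e)^{1/2}$, i.e. that on the unit eigenspace $X_1(e)$ the $h_0$-norm is dominated by $h_0(e,e)^{1/2}$ times the triple norm. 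This is where the structure of $X_1(e)$ as a JB$^*$-algebra with unit $e$ enters: on $X_1(e)$ the Bergman form at $0$ is a constant multiple of the trace form normalized so that $e$ has $h_0$-norm squared equal to $h_0(e,e)$, and the triple norm on $X_1(e)$ coincides with its C$^*$-norm, which dominates the (suitably normalized) Hilbert--Schmidt/trace norm. Concretely, for $y \in X_1(e)$ one has $h_0(y,y) = c_e \,\mathrm{tr}(L_{\{y,y,e\}})$ type expression with $h_0(e,e) = c_e\, \mathrm{tr}(L_e)$, and $\{y,y,e\}\le \|y\|^2 e$ in the order of the JB$^*$-algebra $X_1(e)$, whence $h_0(y,y)\le \|y\|^2 h_0(e,e)$; combined with $\|x_1\|\le\|x\|$ (contractivity of the Peirce-$1$ projection) this finishes the estimate.

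The main obstacle I anticipate is pinning down the precise normalization relating $h_0$ to the trace form on the Peirce-$1$ subalgebra $X_1(e)$ and justifying the operator inequality $\{y,y,e\}\le\|y\|^2 e$ uniformly in the tripotent $e$ — in other words, making rigorous the claim that $h_0$ restricted to each $X_1(e)$ is a positive multiple of $\mathrm{tr}(L_{\{\cdot,\cdot,e\}})$ with the \emph{same} structural constant that appears in $h_0(e,e)$. One clean way around this is to invoke the known formula $h_0(x,y) = \kappa\,\mathrm{tr}(x\square y)$ for a fixed constant $\kappa>0$ (the Bergman metric at $0$ is proportional to the trace of the box operator), and then the inequality becomes $\mathrm{tr}(x\square e)\le\|x\|\,\mathrm{tr}(e\square e)$. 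Since $e\square e$ is a hermitian idempotent-like operator with $0\le e\square e\le I$ and with range/support governed by $e$, and since $x\square e = (x_1)\square e$ with $x_1\square e \le \|x_1\|\, e\square e$ in the operator order on $X$, taking traces yields the claim. I would present the argument in that order: reduce to the trace formula, reduce to Peirce-$1$ via orthogonality, apply the operator-order estimate $x_1\square e\le\|x_1\|\,e\square e$, and take traces, citing \cite{C12} or \cite{L} for the structural facts about Peirce decompositions, box operators, and the trace form, exactly as in the model computation of \cite[Lemma 2.1]{H18}.
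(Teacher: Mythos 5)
Your argument reaches the correct conclusion, but it takes a genuinely different and much heavier route than the paper. The paper's proof is two lines: it quotes \cite[Lemma 6.2]{L}, which already asserts $\frac{1}{(1+\varepsilon)\|x\|}|h_0(x,e)|<h_0(e,e)$ for every $\varepsilon>0$ and every $x\neq 0$, and then lets $\varepsilon\to 0$. All of the structure theory you redevelop --- the Peirce decomposition of $e$, $h_0$-orthogonality of the Peirce spaces, contractivity of the Peirce-$1$ projection, the identification of $h_0$ with a multiple of the trace form, and the JB$^*$-algebra estimate $\{y,y,e\}\le\|y\|^2 e$ on $X_1(e)$ --- is in effect packaged inside the quoted lemma of Loos. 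Your self-contained version is a legitimate alternative and has the merit of showing \emph{why} the inequality holds (at most $\mathrm{rank}(e)$ nonzero singular values of $x_1$, each bounded by $\|x\|$, measured against $h_0(e,e)$, which is the genus times $\mathrm{rank}(e)$ in the irreducible case), but it costs you several nontrivial structural facts that must each be cited or proved, plus a reduction to irreducible factors in the reducible case.

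One step in your final paragraph needs repair or deletion: the asserted operator inequality $x_1\square e\le\|x_1\|\,e\square e$ is not meaningful as stated, because $x_1\square e$ is not a hermitian operator for a general $x_1\in X_1(e)$, and $\mathrm{tr}(x\square e)$ is a complex number, so it cannot be compared to $\|x\|\,\mathrm{tr}(e\square e)$ by positivity alone. Fortunately your second paragraph already contains the correct mechanism: Peirce orthogonality gives $h_0(x,e)=h_0(x_1,e)$, Cauchy--Schwarz gives $|h_0(x_1,e)|\le h_0(x_1,x_1)^{1/2}h_0(e,e)^{1/2}$, and $h_0(y,y)\le\|y\|^2h_0(e,e)$ for $y\in X_1(e)$ follows from the spectral description of $h_0$ together with $\|x_1\|\le\|x\|$. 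If you keep your version, route everything through that Cauchy--Schwarz argument and give precise references for the Peirce contractivity and the trace formula; otherwise the one-line appeal to \cite[Lemma 6.2]{L}, as in the paper, is the cleaner choice.
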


\begin{proof}
We may assume that $x\neq 0$ and $e\neq 0$.
By \cite[Lemma 6.2]{L},
we have
\[
\frac{1}{(1+\varepsilon)\| x\|}|h_0(x,e)|<h_0(e,e)
\]
for any $\varepsilon>0$.
Letting $\varepsilon \to 0$, we obtain the lemma.
This completes the proof.
\end{proof}
\begin{remark}
If $r=1$, then $\B_X$ is irreducible.
Let $w_0 \in \partial \B_X$.
Then there exist a frame $\mathbf{u}=(u_1, \dots, u_r)$ of $X$
and constants $1=\lambda_1\geq \lambda_2 \geq \cdots \geq \lambda_r\geq 0$
such that
$w_0=u_1+\lambda_2 u_2+\cdots +\lambda_r u_r$.
Since $\B_X$ is irreducible,
we have
\[
h_0(w_0, w_0)=g\sum_{i=1}^{r}\lambda_i^2,
\]
where $g$ is a constant, called the genus of $X$
(see \cite{H18}).
Then $h_0(u,u)$ is constant on the set of primitive tripotents in $X$.
Also, there exists an orthogonal basis $e_1$, $\dots$, $e_n$ of $X$
with respect to the Bergman metric $h_0$ on $\B_X$ at $0$
such that each $e_j$ is a primitive tripotent in $X$.
Let $z=z_1 e_1+\dots+ z_ne_n \in X\setminus \{ 0\}$.
We will consider the condition for $z\in \B_X$.
Let $z= ce^*$ be the spectral decomposition of $z$.
Then we have
\[
\| z\|^2=c^2=\frac{h_0(z,z)}{h_0(e^*, e^*)}
=\frac{\sum_{j=1}^n|z_j|^2h_0(e_j,e_j)}{h_0(e^*, e^*)}
=\sum_{j=1}^n|z_j|^2.
\]
Thus, $z\in \B_X$ if and only if $\sum_{j=1}^n|z_j|^2<1$,
and we may suppose $\B_X=\B^n$ if $r=1$,
where $\B^n$ is the Euclidean unit ball of $\C^n$.
\end{remark}

In the rest of this paper, we assume that the rank $r$ of $X$ satisfies $r\geq 2$.
Let $e_1, \ldots, e_r$ be a frame of $X$.
There exist $e_{r+1}, \ldots, e_n\in X$ such that $e_1, \ldots, e_n$ is an orthogonal basis of $X$
with respect to the Bergman metric $h_0$ on $\B_X$ at $0$.
For $z=z_1e_1+\cdots +z_n e_n\in X$,
we also use the notation $z=(z_1, z_2, z'')=(z_1,z')$.

From Lemma \ref{estimate-trace}, we obtain the following lemma:

\begin{lemma}
\label{linear1}
For $z=(z_1,z_2,0'')\in X\setminus \{ 0\}$,
let
\[
l^{(1)}_z(w)=\frac{|z_1|}{z_1}w_1,
\quad w\in X,
\]
for $z_1\neq 0$,
and
\[
l^{(2)}_z(w)=\frac{|z_2|}{z_2}w_2,
\quad w\in X,
\]
for $z_2\neq 0$.
Then $l^{(1)}_z\in T(z)$ for $|z_1|=\| z\|$,
and $l^{(2)}_z\in T(z)$ for $|z_2|=\| z\|$.
\end{lemma}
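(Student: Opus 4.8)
\textbf{Plan for the proof of Lemma \ref{linear1}.}

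The plan is to verify directly, using Lemma \ref{estimate-trace}, that each candidate functional lies in the set $T(z)$, i.e.\ that it has operator norm $1$ and attains the value $\|z\|$ at $z$. The value-at-$z$ requirement is immediate: if $|z_1|=\|z\|$ then $l^{(1)}_z(z)=\frac{|z_1|}{z_1}z_1=|z_1|=\|z\|$, and similarly for $l^{(2)}_z$ when $|z_2|=\|z\|$. So the whole content is the norm computation $\|l^{(1)}_z\|=1$ (respectively $\|l^{(2)}_z\|=1$).

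For the norm, note first that $\|l^{(1)}_z\|\geq 1$ since $l^{(1)}_z(e_1)=\frac{|z_1|}{z_1}$ has modulus $1$ while $\|e_1\|=1$ ($e_1$ being a primitive tripotent, hence of norm $1$). For the reverse inequality, I would apply Lemma \ref{estimate-trace} with the tripotent $e=e_1$: for every $w=w_1e_1+\cdots+w_ne_n\in X$ one has $h_0(w,e_1)=w_1 h_0(e_1,e_1)$ because $e_1,\dots,e_n$ is an orthogonal basis with respect to $h_0$, and therefore
\[
|w_1|\,h_0(e_1,e_1)=|h_0(w,e_1)|\leq \|w\|\,h_0(e_1,e_1),
\]
which gives $|w_1|\leq\|w\|$, hence $|l^{(1)}_z(w)|=|w_1|\leq\|w\|$ and $\|l^{(1)}_z\|\leq 1$. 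Combining, $\|l^{(1)}_z\|=1$, so $l^{(1)}_z\in T(z)$ whenever $|z_1|=\|z\|$. The argument for $l^{(2)}_z$ is identical, using $e=e_2$ and the coordinate functional $w\mapsto w_2$. Note that the scalar factors $|z_1|/z_1$ and $|z_2|/z_2$ are unimodular, so they do not affect the norm; they are present only to make $l_z(z)=\|z\|$ rather than a complex number of modulus $\|z\|$.

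The only mild subtlety—and the step I would be most careful about—is the identity $h_0(w,e_1)=w_1 h_0(e_1,e_1)$, which relies on the orthogonality of the chosen basis $e_1,\dots,e_n$ with respect to $h_0$ (so that $h_0(e_j,e_1)=0$ for $j\geq 2$) together with the conjugate-linearity convention for $h_0$ in the appropriate slot; since $w_1$ appears linearly this causes no difficulty here. Everything else is a one-line application of Lemma \ref{estimate-trace}, so there is no real obstacle; the lemma is essentially a restatement of that estimate specialized to the tripotents $e_1$ and $e_2$ and to vectors supported on the first two coordinates.
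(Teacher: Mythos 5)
Your proof is correct and follows essentially the same route as the paper: the bound $|w_1|\leq\|w\|$ is obtained from Lemma \ref{estimate-trace} applied to the tripotent $e_1$ together with $h_0$-orthogonality of the basis, and the evaluation $l^{(1)}_z(z)=\|z\|$ is immediate. Your extra remark that $\|l^{(1)}_z\|\geq 1$ via $\|e_1\|=1$ is harmless but not needed, since $l^{(1)}_z(z)=\|z\|$ already forces the norm to equal $1$ once the upper bound is in place.
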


\begin{proof}
Let $w=w_1e_1+\cdots +w_ne_n\in X$.
Since $e_1,\dots, e_n$ are orthogonal to each other with respect to $h_0$,
it follows that $|w_1|h_0(e_1,e_1)=|h_0(w,e_1)|$. Also, since
$|h_0(w,e_1)|\leq \| w\| h_0(e_1,e_1)$ by Lemma \ref{estimate-trace},
we obtain that $|w_1|\leq \|w\|$, and thus we have
$|l_z^{(1)}(w)|\leq \| w\|$.
Also, we have $l_z^{(1)}(z)=|z_1|=\| z\|$
for $|z_1|=\| z\|$.
Therefore, $l_z^{(1)}\in T(z)$ for $|z_1|=\| z\|$.
The proof for $l_z^{(2)}$ is similar.
This completes the proof.
\end{proof}

\section{Coefficient bounds for the family ${\mathcal M}_g(\B_X)$}
\label{sec:coeff}
\setcounter{equation}{0}
Let $\B_X$ be a bounded symmetric domain
realized as the open unit ball of a JB$^*$-triple $X=(\C^n, \| \cdot\|)$
of rank $r\geq 2$.
We begin this section with the following notion, which
is an analogue on $\B_X$ of the shearing process due to Bracci \cite[Definition 1.3]{Br}.
Then we obtain some coefficient bounds for the family ${\mathcal M}_g(\B_X)$, where
$g:\U\to \C$ satisfies the conditions of Assumption \ref{assump}
(cf. \cite[Proposition 2.1]{Br}, \cite[Proposition 4.2]{GHK18}, and
\cite[Proposition 4.2]{GHKK16}).
We also obtain various
particular cases. We shall apply Proposition \ref{p.shear1},
to obtain examples of support points for the family $S_g^0(\B_X)$.

\begin{definition}
\label{d.shearing}
Let $\B_X$ be a bounded symmetric domain
realized as the open unit ball of a JB$^*$-triple $X=(\C^n, \| \cdot\|)$ of rank $r\geq 2$.
Let $h=(h_1,\dots, h_n)\in H(\B_X)$ be such that $h(0)=0$.
Then the shearings $h_{i,j}^{[c]}$ $(1\leq i\neq j\leq r)$ of $h$ are given by
\[h_{i,j}^{[c]}(z)=Dh(0)z+\frac{1}{2}\frac{\partial^2 h_i}{\partial z_j^2}(0)z_j^2 e_i.\]
\end{definition}

\begin{proposition}
\label{p.shear1}
Let $\B_X$ be as in Definition \ref{d.shearing}.
Let $g:\U\to \C$ satisfy the conditions of Assumption
$\ref{assump}$ and let $d_1(g)={\rm dist}(1,\partial g(\U))$.
If $h=(h_1,h_2,\dots, h_n)\in {\mathcal M}_g(\B_X)$, then
\begin{equation}
\label{estim1}
\left|
\frac{1}{2}\frac{\partial^2 h_i}{\partial z_j^2}(0)
\right| \leq  d_1(g),
\quad
\mbox{for }
1\leq i\neq j\leq r.
\end{equation}
In addition, $h_{i,j}^{[c]}\in {\mathcal M}_g(\B_X)$
for $1\leq i\neq j\leq r$.
\end{proposition}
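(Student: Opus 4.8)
The plan is to reduce everything to a one–variable statement about the Carathéodory class $\mathcal P$ by restricting $h$ to a suitable two–dimensional slice. Fix indices $1\le i\ne j\le r$. Since $e_1,\dots,e_r$ is a frame of $X$, the span $X_0=\mathbb{C}e_i\oplus\mathbb{C}e_j$ is a rank‑$2$ subtriple whose open unit ball is the bidisc: by the computation recalled in the remarks, a point $z=z_ie_i+z_je_j$ lies in $\mathbb{B}_X$ iff $\max\{|z_i|,|z_j|\}<1$. For $z=z_je_j$ with $0<|z_j|<1$ we have $\|z\|=|z_j|$, so by Lemma \ref{linear1} (applied with the roles of the two coordinates adjusted to $i,j$) the functional $l_z(w)=\frac{|z_j|}{z_j}w_j$ belongs to $T(z)$. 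The defining property of $\mathcal M_g(\mathbb{B}_X)$ then gives $\frac{1}{|z_j|}l_z(h(z))=\frac{h_j(z_je_j)}{z_j}\in g(\mathbb{U})$, i.e. the one–variable function $\varphi(\zeta):=h_j(\zeta e_j)/\zeta$ (with $\varphi(0)=1$) maps $\mathbb{U}$ into $g(\mathbb{U})$.

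Next I would extract the coefficient bound. Write the homogeneous expansion $h(z)=z+P_2(z)+\cdots$ and set $a:=\frac12\frac{\partial^2 h_i}{\partial z_j^2}(0)$, so that along the slice $z=z_je_j$ the $e_i$–component of $P_2$ is $a\,z_j^2$. To see $a$ I pick instead the functional supported on the $i$‑th coordinate: for $z=z_ie_i+z_je_j$ with $|z_i|=\|z\|$ (achieved e.g. for $|z_i|$ slightly larger than $|z_j|$), $l_z(w)=\frac{|z_i|}{z_i}w_i\in T(z)$, and $\frac{1}{\|z\|}l_z(h(z))=\frac{|z_i|}{z_i\|z\|}\big(z_i+a z_j^2+\cdots\big)\in g(\mathbb{U})$. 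Letting $|z_i|\to|z_j|^-$ along $z_i=z_j=\zeta$, the left side tends to $1+a\zeta+O(\zeta^2)$, so the slice function $\psi(\zeta):=1+a\zeta+\cdots$ lies in $\overline{g(\mathbb{U})}$; more carefully one works at $z_i=e^{i\theta}|z_j|$ to keep the quotient inside $g(\mathbb{U})$ and then passes to the closure. Since $g(0)=1$ and $g$ is convex hence its image contains the disc of radius $d_1(g)={\rm dist}(1,\partial g(\mathbb{U}))$ about $1$ and is contained in the half–plane $\Re w>0$, Schwarz‑type reasoning (the Carathéodory/Koebe estimate for functions with values in $g(\mathbb{U})$, as in \cite[Proposition 4.2]{GHKK16}) yields $|a|\le d_1(g)$. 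This gives \eqref{estim1}.

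For the second assertion I must show the shearing $h^{[c]}_{i,j}(z)=z+a\,z_j^2e_i$ again lies in $\mathcal M_g(\mathbb{B}_X)$. The point is that for any $z=z_1e_1+\cdots+z_ne_n\in\mathbb{B}_X\setminus\{0\}$ and any $l_z\in T(z)$ one has $|l_z(w)|\le\|w\|$ for all $w$, and in particular $|l_z(z_j^2e_i)|$ must be controlled by $\|z\|$; indeed, since $e_i$ is a tripotent, Lemma \ref{estimate-trace} gives $|h_0(z_j^2e_i,e_i)|\le\|z_j^2e_i\|h_0(e_i,e_i)$, which combined with the normalization used in Lemma \ref{linear1} shows $|l_z(z_j^2e_i)|\le|z_j|^2\le|z_j|\,\|z\|\le\|z\|^2$ (using $|z_j|\le\|z\|$, valid because $e_j$ is a tripotent). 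Hence
\[
\frac{1}{\|z\|}l_z\big(h^{[c]}_{i,j}(z)\big)=1+\frac{a\,l_z(z_j^2e_i)}{\|z\|}
\]
is a point of the closed disc $\overline{\mathbb{D}}(1,|a|\,|z_j|^2/\|z\|)\subseteq\overline{\mathbb{D}}(1,d_1(g))\subseteq\overline{g(\mathbb{U})}$; to land strictly inside $g(\mathbb{U})$ one notes $|z_j|<\|z\|$ unless $z$ is a scalar multiple of $e_j$, and in that exceptional case $\frac{1}{\|z\|}l_z(h^{[c]}_{i,j}(z))=1+a z_j$ with $|z_j|<1$, which lies in $g(\mathbb{U})$ by convexity and the radius estimate. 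Thus $h^{[c]}_{i,j}\in\mathcal M_g(\mathbb{B}_X)$.

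The step I expect to be the main obstacle is the honest extraction of the coefficient estimate $|a|\le d_1(g)$: one must be careful that the relevant one–variable slice function genuinely takes values in $g(\mathbb{U})$ (not merely its closure) so that the subordination/Carathéodory machinery of \cite[Proposition 4.2]{GHKK16} applies, which forces the two–parameter limiting argument $|z_i|\downarrow|z_j|$ together with a rotation in $z_i$; the rank‑$\ge 2$ hypothesis is exactly what guarantees the bidisc slice on which both coordinate functionals are simultaneously available, and the JB$^*$-triple structure (through Lemmas \ref{estimate-trace} and \ref{linear1}) is what makes $|z_j|\le\|z\|$ and the functionals $l^{(k)}_z$ legitimate.
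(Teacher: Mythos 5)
There is a genuine gap in your derivation of the estimate \eqref{estim1}. Writing $h_i(z)=z_i+q^i_{2,0}z_i^2+q^i_{1,1}z_iz_j+q^i_{0,2}z_j^2+\cdots$ on the bidisc slice, the diagonal restriction $z_i=z_j=\zeta$ gives
\[
\frac{h_i(\zeta e_i+\zeta e_j)}{\zeta}=1+\big(q^i_{2,0}+q^i_{1,1}+q^i_{0,2}\big)\zeta+O(\zeta^2),
\]
so your slice function $\psi$ has linear coefficient equal to the \emph{sum} of all three second--order coefficients of $h_i$, not $a=q^i_{0,2}$ alone; the assertion ``$\psi(\zeta)=1+a\zeta+\cdots$'' is false as written, and no subordination estimate applied to this one slice can control $|a|$ by itself. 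Moreover, even for a genuinely isolated coefficient the Schwarz/subordination machinery you invoke gives $|a|\leq |g'(0)|$, and for convex $g$ one only has $d_1(g)\leq |g'(0)|\leq 2d_1(g)$, so this route cannot produce the constant $d_1(g)$ of \eqref{estim1}. The paper fixes both problems at once with a two--phase averaging argument that you gesture at (``a rotation in $z_i$'') but never carry out: on the torus $|z_1|=|z_2|=\rho=\|z\|$ one sets $z_1=\rho e^{i(\eta+\theta-\lambda)}$, $z_2=\rho e^{i\theta/2}$ and integrates $h_1(z)/z_1\in g(\U)$ over $\theta\in[0,4\pi]$; every monomial except the $q^1_{0,2}$ term averages to zero, convexity of $g(\U)$ places the average $1+|q^1_{0,2}|e^{i\lambda}\rho$ in $\overline{g(\U)}$, and since $\lambda$ is arbitrary the whole circle of radius $|q^1_{0,2}|\rho$ about $1$ lies in $\overline{g(\U)}$, whence $|q^1_{0,2}|\rho\leq{\rm dist}(1,\partial g(\U))$. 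This isolation-plus-full-circle step is the missing idea. (Your limiting direction ``$|z_i|\to|z_j|^-$'' after choosing $|z_i|$ \emph{larger} than $|z_j|$ is also inconsistent, but that is cosmetic by comparison.)

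The second assertion is handled essentially as in the paper: $|z_j|\leq\|z\|$ from Lemma \ref{estimate-trace}, $|l_z(e_i)|\leq 1$, hence $\big|\tfrac{1}{\|z\|}l_z(h^{[c]}_{i,j}(z))-1\big|\leq |a|\,|z_j|^2/\|z\|\leq |a|\,\|z\|<|a|\leq d_1(g)$, which places the value in the open disc of radius $d_1(g)$ about $1$ and hence in $g(\U)$. Your case distinction for $z\in\C e_j$ and the passage through $\overline{g(\U)}$ are unnecessary: strictness already follows from $\|z\|<1$. So the second half is sound (modulo the fact that it relies on the as-yet-unproved bound $|a|\leq d_1(g)$), but the first half needs the averaging argument to be a proof.
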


\begin{proof}
We only give a proof in the case $i=1$ and $j=2$.
The other cases can be proved by using similar arguments.
Also, we shall use arguments which modify those in the proofs of \cite[Proposition 2.1]{Br}
and \cite[Proposition 4.2]{GHK18}.

Since $h(0)=0$ and $Dh(0)=I_n$, we deduce that $h$
has the following power series expansion on $\B_X \cap \{ (z_1,z_2,0'')\in \C^n\}$:
$$h(z)=\bigg(z_1+\sum_{\alpha\in \mathbb{N}^2,|\alpha|\geq 2}q_\alpha^1z^\alpha,
z_2+\sum_{\alpha\in \mathbb{N}^2,|\alpha|\geq 2}q_\alpha^2z^\alpha, h''\bigg),
\quad z=(z_1,z_2, 0'')\in\B_X.$$
Since $h\in {\mathcal M}_g(\B_X)$, it follows that
\[
\frac{1}{\|z\|}l_z(h(z))\in g(\U), \quad z\in
\B_X\setminus\{0\}, \quad l_z\in T(z).
\]
Let $z=(z_1, z_2,0'')\in \B_X$ with $|z_1|=|z_2|=\rho \in (0,1)$.
Taking into account Lemma \ref{linear1}, we obtain that
\[
\frac{h_k(z)}{z_k}=1+
\sum_{\alpha\in \mathbb{N}^2, |\alpha|\geq 2}q_\alpha^k\frac{z^\alpha}{z_k}
\in g(\U),
\]
for $|z_k|=\|z\|=\rho\in (0,1)$, $k=1,2$.
Next,
let $\eta\in [0,2\pi)$ be such that
$q_{0,2}^1=|q_{0,2}^1|e^{i\eta}$, where $q_{0,2}^1=\frac{1}{2}\frac{\partial^2 h_1}{\partial z_2^2}(0)$.
For arbitrary $\theta,\lambda \in [0,2\pi)$, we put $z_1=\rho e^{i(\eta+\theta-\lambda)}$ and $z_2=\rho e^{i\theta/2}$.
Then, using the assumption that $g(\U)$ is convex, we deduce that
\[
\frac{1}{4\pi} \int_0^{4\pi}\frac{h_k(z)}{z_k}d\theta=
1+|q_{0,2}^1|e^{i\lambda}\rho \in \overline{g(\U)}.
\]
Since $\lambda \in [0,2\pi)$ is arbitrary, we have
$$|q_{0,2}^1|\rho\leq {\rm dist}(1,\partial g(\U))=d_1(g).$$
Letting
$\rho\to 1$,
we obtain that
\[
|q_{0,2}^1|\leq d_1(g).
\]

Next, we prove that $h_{1,2}^{[c]}\in {\mathcal M}_g(\B_X)$.
For $z\in \B_X\setminus \{ 0\}$,
let $l_z\in T(z)$ be arbitrarily fixed.
In view of Lemma \ref{estimate-trace},
we have
\[
|z_2|=\frac{|h_0(z,e_2)|}{|h_0(e_2,e_2)|}\leq \| z\|.
\]
Therefore, by
(\ref{estim1}),
we obtain that
\[
\left|\frac{1}{\| z\|}l_z(h_{1,2}^{[c]}(z))-1\right|
\leq \left|\frac{q_{0, 2}^1z_2^2}{\| z\|}\right|
<  d_1(g).
\]
Thus, we obtain that $h_{1,2}^{[c]}(z)\in  g(\U)$. Since
$z\in\B_X\setminus\{0\}$ and $l_z\in T(z)$ are arbitrary, we conclude that
$h_{1,2}^{[c]}\in {\mathcal M}_g(\B_X)$, as desired. This completes the proof.
\end{proof}

The following result yields that the estimate (\ref{estim1}) is sharp (cf. \cite{Br} and
\cite{GHKK16}, in the case of the Euclidean unit ball $\B^2$ in $\C^2$;
see also \cite[Proposition 4.6]{GHK18}, in the case of the unit bidisc
$\U^2$ in $\C^2$).

\begin{proposition}
\label{p.estim}
Let $\B_X$ be as in Definition \ref{d.shearing}.
Let $g:\U\to\C$
satisfy the conditions of Assumption
$\ref{assump}$ and let $d_1(g)={\rm dist}(1,\partial g(\U))$.
Let $h_{i,j}[g]:\B_X\to X$ be given by
$$h_{i,j}[g](z)=z\pm d_1(g)z_j^2e_i,\quad z\in\B_X,
\quad 1\leq i\neq j\leq r.$$
Then $h_{i,j}[g]\in {\mathcal M}_g(\B_X)$ for $1\leq i\neq j\leq r$.
Also, if $f_{i,j}[g]=f_{i,j}[g](z,t):\B_X\times [0,\infty)\to X$
are given by
$$f_{i,j}[g](z,t)=e^t\big(z\mp d_1(g)z_j^2 e_i\big),
\quad z\in\B_X,\quad t\geq 0,$$
then $f_{i,j}[g](z,t)$ are $g$-Loewner chains
for $1\leq i\neq j\leq r$.
In particular, $F_{i,j}[g]\in S_g^0(\B_X)$ and $F_{i,j}[g]$ are $g$-starlike on $\B_X$,
where
\begin{equation}
\label{extr-map}
F_{i,j}[g](z)=z\mp d_1(g)z_j^2e_i,\quad z\in\B_X,
\end{equation}
for
$1\leq i\neq j\leq r$.
\end{proposition}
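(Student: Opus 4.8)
The plan is to verify the three assertions of Proposition~\ref{p.estim} in order, reducing everything to the two-variable slice where $\B_X \cap \{(z_1,z_2,0'')\} $ behaves like a bidisc. First I would show $h_{i,j}[g] \in \mathcal{M}_g(\B_X)$. Fix $z \in \B_X \setminus \{0\}$ and $l_z \in T(z)$. The mapping $h_{i,j}[g]$ is normalized, so $\frac{1}{\|z\|} l_z(h_{i,j}[g](z)) = 1 \pm d_1(g) \frac{z_j^2}{\|z\|} l_z(e_i)$. Since $\|l_z\| = 1$ we have $|l_z(e_i)| \leq \|e_i\| = 1$ (as $e_i$ is a primitive tripotent), and by Lemma~\ref{estimate-trace} applied to the tripotent $e_j$ we get $|z_j| = |h_0(z,e_j)|/h_0(e_j,e_j) \leq \|z\|$, hence $|z_j^2/\|z\|| \leq |z_j| < 1$. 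Therefore $\left| \frac{1}{\|z\|} l_z(h_{i,j}[g](z)) - 1 \right| < d_1(g) = {\rm dist}(1,\partial g(\U))$, so this value lies in $g(\U)$ (using that $g$ is convex, hence $g(\U)$ contains the open disc of radius $d_1(g)$ centered at $1$). Since $z$ and $l_z$ were arbitrary, $h_{i,j}[g] \in \mathcal{M}_g(\B_X)$.

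Next I would identify $f_{i,j}[g](z,t) = e^t(z \mp d_1(g) z_j^2 e_i)$ as a $g$-Loewner chain. A direct computation gives $\frac{\partial f_{i,j}[g]}{\partial t}(z,t) = e^t(z \mp d_1(g) z_j^2 e_i)$, while $Df_{i,j}[g](z,t) = e^t(I_X \mp 2 d_1(g) z_j\, e_i \otimes e_j^*)$ (the map $w \mapsto w \mp 2 d_1(g) z_j w_j e_i$). One checks that $Df_{i,j}[g](z,t)\, h_{i,j}[g](z) = e^t(z \mp d_1(g) z_j^2 e_i) = \frac{\partial f_{i,j}[g]}{\partial t}(z,t)$, because the mixed term is $\mp 2 d_1(g) z_j (\pm d_1(g) z_j^2) e_i$ times the $e_j$-component, which vanishes since $(z_j^2 e_i)$ has zero $e_j$-component for $i \neq j$ — more precisely the correction to the $e_i$-component coming from $Df$ acting on the $z_j^2 e_i$ part is $\mp 2 d_1(g) z_j \cdot (\pm d_1(g)z_j^2) = -2d_1(g)^2 z_j^3 \cdot(\pm1)(\pm1)$, and one must track the signs carefully so that $f_{i,j}[g]$ solves \eqref{loewner} with Herglotz field $h(z,t) \equiv h_{i,j}[g](z)$. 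Since $h_{i,j}[g] \in \mathcal{M}_g(\B_X)$ by the first step and is autonomous (hence measurable in $t$), it is a Herglotz vector field with values in $\mathcal{M}_g(\B_X)$; moreover $e^{-t} f_{i,j}[g](\cdot,t) = \mathrm{id} \mp d_1(g) z_j^2 e_i$ is independent of $t$, so $\{e^{-t} f_{i,j}[g](\cdot,t)\}_{t \geq 0}$ is trivially a normal family and each $f_{i,j}[g](\cdot,t)$ is univalent (it is a polynomial automorphism of $\C^n$ of shear type, with polynomial inverse $w \mapsto w \pm d_1(g) w_j^2 e_i$, restricted to $\B_X$). By Remark~\ref{r.loewner-eq}(ii), $f_{i,j}[g](z,t)$ is a Loewner chain, and being normal with Herglotz field in $\mathcal{M}_g(\B_X)$ it is a $g$-Loewner chain.

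Finally, the "in particular" claim: since $f_{i,j}[g](z,t)$ is a $g$-Loewner chain with $f_{i,j}[g](\cdot,0) = F_{i,j}[g]$, Remark after Definition~\ref{d.g-param} (part (ii)) gives $F_{i,j}[g] \in S_g^0(\B_X)$. For $g$-starlikeness: $F_{i,j}[g] \in \mathcal{L}S(\B_X)$ because $DF_{i,j}[g](z) = I_X \mp 2 d_1(g) z_j\, (e_i \otimes e_j^*)$ is invertible for every $z$ (its only eigenvalue possibly $\neq 1$ would be in the $e_i$-$e_j$ block, but that $2\times 2$ block is $\left(\begin{smallmatrix} 1 & \mp 2d_1(g)z_j \\ 0 & 1 \end{smallmatrix}\right)$, which is unipotent). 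Since $f_{i,j}[g](z,t) = e^t F_{i,j}[g](z)$, Remark after Definition~\ref{d.g-param}(iii) — namely $f \in S_g^*(\B_X)$ iff $e^t f(z)$ is a $g$-Loewner chain — shows $F_{i,j}[g] \in S_g^*(\B_X)$. The main obstacle I anticipate is purely bookkeeping: getting the $\pm/\mp$ signs consistent between $h_{i,j}[g]$, $f_{i,j}[g]$, and $F_{i,j}[g]$ so that the Loewner equation \eqref{loewner} is satisfied exactly (note the deliberate sign flip between $h_{i,j}[g]$ carrying $+d_1(g)$ and $f_{i,j}[g]$ carrying $\mp d_1(g)$), together with confirming that $e_i$ being a primitive tripotent gives $\|e_i\| = 1$ so that $|l_z(e_i)| \le 1$. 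No deep new input beyond Lemma~\ref{estimate-trace} and convexity of $g(\U)$ is needed.
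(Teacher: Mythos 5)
Your proposal is correct and follows essentially the same route as the paper: membership of $h_{i,j}[g]$ in ${\mathcal M}_g(\B_X)$ via $|z_j|\le\|z\|$ (Lemma \ref{estimate-trace}), $\|e_i\|=1$, and the fact that the open disc of radius $d_1(g)$ about $1$ lies in $g(\U)$, followed by direct verification of the Loewner PDE with the autonomous Herglotz field $h_{i,j}[g]$ and the normality/univalence of the shear, exactly the steps the paper compresses into ``follows from the proof of Proposition \ref{p.shear1}'' and ``it is easy to see.'' Your description of the ``mixed term'' in the computation $DF_{i,j}[g](z)\,h_{i,j}[g](z)=F_{i,j}[g](z)$ is a little garbled, but the identity does hold with the stated sign flip, so the argument is sound.
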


\begin{proof}
Let $i,j\in \mathbb{Z}$ with $1\leq i\neq j\leq r$ be fixed.
The fact that $h_{i,j}[g]\in {\mathcal M}_g(\B_X)$ follows from
the proof of Proposition \ref{p.shear1}.
Also, it is easy to see that $f_{i,j}[g](z,t)$ is a normal Loewner chain which
satisfies the Loewner differential equation
$$\frac{\partial f_{i,j}[g]}{\partial t}(z,t)=Df_{i,j}[g](z,t)h_{i,j}[g](z),\quad \forall\, (z,t)\in\B_X\times [0,\infty).$$
Since $h_{i,j}[g]\in {\mathcal M}_g(\B_X)$, it follows that $f_{i,j}[g](z,t)$ is a $g$-Loewner chain, and thus
$F_{i,j}[g]=f_{i,j}[g](\cdot,0)\in S_g^0(\B_X)$. Finally, since $f_{i,j}[g](z,t)=e^tF_{i,j}[g](z)$ is a $g$-Loewner chain,
it follows that $F_{i,j}[g]\in S_g^*(\B_X)$, as desired. This completes the proof.
\end{proof}

Next, we point out certain particular cases of interest.
First, let $g(\zeta)=\frac{1-\zeta}{1+\zeta}$, $\zeta\in\U$.
Then we have ${\mathcal M}_g(\B_X)={\mathcal M}(\B_X)$
and ${\rm dist}\,(1,\partial g(\U))=1$.
In view of Propositions \ref{p.shear1} and \ref{p.estim}, we obtain
the following consequence (compare \cite[Proposition 2.1 and Corollary 2.2]{Br} and
\cite{GHKK16}, in the case of the Euclidean unit ball $\B^2$;
see \cite[Corollary 4.8]{GHK18} and \cite[Lemma 5]{Por1}, in the
case of the unit bidisc $\U^2$).

\begin{corollary}
\label{c.carath1}
Let $\B_X$ be as in Definition \ref{d.shearing}.
Let $h=(h_1,h_2, \dots, h_n)\in {\mathcal M}(\B_X)$.
Then $h_{i,j}^{[c]}\in {\mathcal M}(\B_X)$
for $1\leq i\neq j\leq r$,
and the following estimates hold:
\[
\left|
\frac{1}{2}\frac{\partial^2 h_i}{\partial z_j^2}(0)
\right|
\leq 1,
\quad
\mbox{for }
1\leq i\neq j\leq r.
\]
These estimates are sharp, for all $1\leq i\neq j\leq r$.
\end{corollary}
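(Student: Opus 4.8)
The plan is to derive Corollary \ref{c.carath1} as the specialization of Propositions \ref{p.shear1} and \ref{p.estim} to the particular Carath\'eodory--Pick function $g(\zeta)=\frac{1-\zeta}{1+\zeta}$, $\zeta\in\U$. The first step is to record the two elementary facts about this $g$: (a) it satisfies the conditions of Assumption \ref{assump} --- it is a conformal automorphism of the right half-plane onto itself composed with a M\"obius map, hence univalent and convex, with $g(0)=1$ and $\Re g(\zeta)>0$ on $\U$; and (b) with this choice, Definition \ref{d.Ag-class} reduces to $\frac{1}{\|z\|}l_z(h(z))\in g(\U)=\{w:\Re w>0\}$, which is exactly the defining condition of ${\mathcal M}(\B_X)$, so that ${\mathcal M}_g(\B_X)={\mathcal M}(\B_X)$. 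It remains to compute $d_1(g)={\rm dist}(1,\partial g(\U))$; since $g(\U)$ is the open right half-plane and $g(0)=1$, the nearest boundary point is $0$, so $d_1(g)=1$.

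Once these identifications are in place, the conclusion is immediate. Applying Proposition \ref{p.shear1} with this $g$: for any $h=(h_1,\dots,h_n)\in {\mathcal M}(\B_X)={\mathcal M}_g(\B_X)$, the shearings $h_{i,j}^{[c]}$ lie in ${\mathcal M}_g(\B_X)={\mathcal M}(\B_X)$ for $1\le i\ne j\le r$, and the coefficient estimate (\ref{estim1}) reads $\big|\tfrac12\tfrac{\partial^2 h_i}{\partial z_j^2}(0)\big|\le d_1(g)=1$. Sharpness then follows from Proposition \ref{p.estim}: taking $h_{i,j}[g](z)=z\pm z_j^2 e_i$ (since $d_1(g)=1$) gives a mapping in ${\mathcal M}_g(\B_X)={\mathcal M}(\B_X)$ whose relevant second-order coefficient has modulus exactly $1$, so the bound cannot be improved, for every pair $1\le i\ne j\le r$.

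There is essentially no obstacle here --- the statement is a corollary in the literal sense, and the only thing requiring a word of care is verifying $d_1(g)=1$ and the identity ${\mathcal M}_g(\B_X)={\mathcal M}(\B_X)$, both of which are standard (the latter is already noted in Remark \ref{remark-g}-type reasoning and in the paragraph preceding the corollary). If one wanted to be fully self-contained one could also recall the classical scalar fact that $p\in\mathcal P$ forces $|p'(0)|\le 2$, which is the one-variable shadow of the estimate; but invoking Propositions \ref{p.shear1} and \ref{p.estim} directly is cleaner and makes the proof a two-line deduction.
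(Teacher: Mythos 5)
Your proposal is correct and coincides with the paper's own derivation: the corollary is obtained by specializing Propositions \ref{p.shear1} and \ref{p.estim} to $g(\zeta)=\frac{1-\zeta}{1+\zeta}$, using exactly the identifications ${\mathcal M}_g(\B_X)={\mathcal M}(\B_X)$ and ${\rm dist}(1,\partial g(\U))=1$ that you verify.
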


Next, let $\alpha\in [0,1)$ and $g(\zeta)=\frac{1-\zeta}{1+(1-2\alpha)\zeta}$,
$\zeta\in\U$. Then ${\mathcal M}_g(\B_X)={\mathcal M}_\alpha(\B_X)$
and ${\rm dist}(1,\partial g(\U))=d_1(\alpha)$, where (cf. \cite{GHKK16})
\begin{equation}
\label{a0-alpha}
d_1(\alpha)=\left\{\begin{array}{l} 1,\quad\quad\quad\quad \alpha\in [0, \frac{1}{2}]\\
\frac{1-\alpha}{\alpha},\quad\quad\quad\alpha\in (\frac{1}{2},1).
\end{array}\right.
\end{equation}
In this case, we obtain the following
sharp estimate for the family ${\mathcal M}_\alpha(\B_X)$ (compare with \cite{Br} and \cite{GHKK16},
in the case of the unit ball $\B^2$; see \cite{GHK18}, in the case of the
unit bidisc $\U^2$; see also \cite{S2} for $\alpha=0$ and $\B_X=\U^2$).

\begin{corollary}
\label{c.carath2}
Let $\B_X$ be as in Definition \ref{d.shearing}.
Let $\alpha\in [0,1)$ and let $h=(h_1,h_2, \dots, h_n)\in {\mathcal M}_\alpha(\B_X)$. Then
$h_{i,j}^{[c]}\in {\mathcal M}_\alpha(\B_X)$
for $1\leq i\neq j\leq r$, and the following estimates hold:
\[
\left|
\frac{1}{2}\frac{\partial^2 h_i}{\partial z_j^2}(0)
\right| \leq d_1(\alpha),
\quad
\mbox{for }
1\leq i\neq j\leq r,
\]
where $d_1(\alpha)$ is given
by $(\ref{a0-alpha})$.
These estimates are sharp, for all $i\neq j$.
\end{corollary}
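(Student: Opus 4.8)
The plan is to deduce Corollary~\ref{c.carath2} directly from Propositions~\ref{p.shear1} and~\ref{p.estim} by specializing to the function $g(\zeta)=\frac{1-\zeta}{1+(1-2\alpha)\zeta}$, $\zeta\in\U$, for a fixed $\alpha\in[0,1)$. The first step is to verify that this $g$ satisfies the hypotheses of Assumption~\ref{assump}: it is a M\"obius transformation, hence univalent, it maps $\U$ onto the half-plane $\{w:\Re w>0\}$ when $\alpha\in[0,1)$ (so in particular $\Re g(\zeta)>0$ and the image is convex), and $g(0)=1$. Thus ${\mathcal M}_g(\B_X)$ is well defined and coincides with ${\mathcal M}_\alpha(\B_X)$ by the definition in Remark~\ref{r.examples}(i).

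The second step is to compute $d_1(g)={\rm dist}(1,\partial g(\U))$ and confirm it equals $d_1(\alpha)$ as given in $(\ref{a0-alpha})$. Since $g(\U)$ is the open half-plane $\Re w>0$, its boundary is the imaginary axis together with the point $\infty$; but one must be careful, because for $\alpha\le 1/2$ the relevant distance is simply $\Re 1=1$, whereas for $\alpha>1/2$ the geometry is different. Actually the cleanest route is to note that $g(\U)$ is the disc (or half-plane) symmetric about the real axis with $g(1)=0$ and $g(-1)=\frac{2}{2-2\alpha}=\frac{1}{1-\alpha}$ when $\alpha<1$; for $\alpha\le 1/2$ this is a half-plane and the nearest boundary point to $1$ is $0$, giving $d_1=1$; for $\alpha\in(1/2,1)$ the image $g(\U)$ is a bounded disc centered on the real axis passing through $0$ and $\frac{1}{1-\alpha}$... wait, let me instead just use the formula~$(\ref{tilde-a0})$-style reasoning: since $g$ has real coefficients, ${\rm dist}(1,\partial g(\U))=\min\{|1-g(\zeta)|:\zeta\in\partial\U\}$, and by the half-plane/disc description one checks directly that the minimum is attained and equals $1$ for $\alpha\in[0,1/2]$ and $\frac{1-\alpha}{\alpha}$ for $\alpha\in(1/2,1)$. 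This is the one genuinely computational point, but it is routine Euclidean geometry of the image disc and is already recorded in \cite{GHKK16}, so I would simply cite that and include a one-line verification.

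The third and final step is bookkeeping: applying Proposition~\ref{p.shear1} with this $g$ immediately gives $h_{i,j}^{[c]}\in{\mathcal M}_g(\B_X)={\mathcal M}_\alpha(\B_X)$ and the bound $\bigl|\tfrac12\tfrac{\partial^2 h_i}{\partial z_j^2}(0)\bigr|\le d_1(g)=d_1(\alpha)$ for all $1\le i\ne j\le r$. For sharpness, Proposition~\ref{p.estim} (again specialized to this $g$) furnishes the explicit mappings $h_{i,j}[g](z)=z\pm d_1(\alpha)z_j^2 e_i\in{\mathcal M}_\alpha(\B_X)$, for which the relevant second-order coefficient has modulus exactly $d_1(\alpha)$; this establishes that the estimate cannot be improved, for every pair $i\ne j$ with $1\le i,j\le r$.

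I do not anticipate any serious obstacle: both propositions have already been proved in the generality needed, and the only real content is identifying ${\mathcal M}_g={\mathcal M}_\alpha$ and evaluating the distance $d_1(g)$. The mildly delicate point is the case split at $\alpha=1/2$ in the computation of $d_1(\alpha)$, where the image of $g$ transitions in the sense that the closest boundary point switches from $0$ (the value $g(1)$) to a point realized on an arc; but this is purely elementary plane geometry and is handled by the explicit formula $(\ref{a0-alpha})$, which I would quote from \cite{GHKK16} rather than rederive in detail.
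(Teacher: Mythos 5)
Your proposal is correct and follows essentially the same route as the paper: Corollary \ref{c.carath2} is obtained there simply by specializing Propositions \ref{p.shear1} and \ref{p.estim} to $g(\zeta)=\frac{1-\zeta}{1+(1-2\alpha)\zeta}$, noting ${\mathcal M}_g(\B_X)={\mathcal M}_\alpha(\B_X)$, and quoting ${\rm dist}(1,\partial g(\U))=d_1(\alpha)$ from \cite{GHKK16}. Two minor slips in your exploratory computation --- $g(\U)$ is a half-plane only for $\alpha=0$ (otherwise a bounded disc), and $g(-1)=1/\alpha$ rather than $1/(1-\alpha)$ --- but neither affects the argument, since you ultimately rely on the recorded formula $(\ref{a0-alpha})$.
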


Further, let $\alpha\in [0,1)$ and $g(\zeta)=\frac{1-(1-2\alpha)\zeta}{1+\zeta}$,
$\zeta\in \U$. Then, ${\rm dist}(1,\partial g(\U))=1-\alpha$.
From Propositions \ref{p.shear1} and \ref{p.estim},
we obtain the following consequence (compare \cite{Br} and \cite{GHKK16},
in the case of the unit ball $\B^2$; see \cite{GHK18}, in the case of the
unit bidisc $\U^2$).

\begin{corollary}
\label{c.carath3}
Let $\B_X$ be as in Definition \ref{d.shearing}.
Let $\alpha\in [0,1)$ and $h\in {\mathcal M}_g(\B_X)$,
where $g(\zeta)=\frac{1-(1-2\alpha)\zeta}{1+\zeta}$, $\zeta\in \U$.
Then $h_{i,j}^{[c]}\in {\mathcal M}_g(\B_X)$
for $1\leq i\neq j\leq r$, and the following sharp estimates hold:
\[
\left|
\frac{1}{2}\frac{\partial^2 h_i}{\partial z_j^2}(0)
\right|
\leq  1-\alpha,
\quad
\mbox{for }
1\leq i\neq j\leq r.
\]
\end{corollary}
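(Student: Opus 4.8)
The plan is to derive Corollary \ref{c.carath3} as a direct specialization of Propositions \ref{p.shear1} and \ref{p.estim}, exactly as the two preceding corollaries were derived. First I would verify that the function $g(\zeta)=\frac{1-(1-2\alpha)\zeta}{1+\zeta}$ satisfies the conditions of Assumption \ref{assump}: it is a Möbius transformation, hence univalent on $\U$, it maps $\U$ onto a half-plane (in particular a convex domain), and one checks directly that $g(0)=1$ and that $g(\U)=\{w:\Re w>\alpha\}$, so $\Re g(\zeta)>\alpha\geq 0$ on $\U$. This identification of the image as the half-plane $\{\Re w>\alpha\}$ is the one nontrivial computation; it follows by noting that the boundary circle $|\zeta|=1$ is sent to the vertical line $\Re w=\alpha$ (evaluate at $\zeta=\pm 1$ and at one more boundary point, or use that Möbius maps send circles/lines to circles/lines together with the value at $0$).

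Next I would compute $d_1(g)={\rm dist}(1,\partial g(\U))$. Since $g(\U)=\{\Re w>\alpha\}$ and $\partial g(\U)=\{\Re w=\alpha\}$, the distance from the point $1$ to this vertical line is simply $1-\alpha$ (valid since $\alpha\in[0,1)$ guarantees $1>\alpha$, so $1\in g(\U)$ and the distance is horizontal). Having established $d_1(g)=1-\alpha$, Proposition \ref{p.shear1} applies verbatim: for $h\in{\mathcal M}_g(\B_X)$ we get $h_{i,j}^{[c]}\in{\mathcal M}_g(\B_X)$ and $\big|\tfrac12\tfrac{\partial^2 h_i}{\partial z_j^2}(0)\big|\leq d_1(g)=1-\alpha$ for $1\leq i\neq j\leq r$. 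For sharpness, Proposition \ref{p.estim} provides the mappings $h_{i,j}[g](z)=z\pm(1-\alpha)z_j^2e_i\in{\mathcal M}_g(\B_X)$, which attain the bound in every coordinate pair $i\neq j$; this makes the estimates sharp.

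I do not anticipate any real obstacle here, since the statement is a corollary and all the substantive work—the shearing argument, the convexity-of-$g(\U)$ averaging trick, and the construction of extremal mappings—has already been carried out in the two propositions. The only place requiring care is the image computation for the specific $g$, and making sure the branch/orientation issue does not arise (it does not, since $g$ is a genuine Möbius map, not a power function as in Remark \ref{r.examples}(iii)). The proof is therefore essentially a two-line invocation:

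\begin{proof}
The function $g(\zeta)=\frac{1-(1-2\alpha)\zeta}{1+\zeta}$ is univalent on $\U$, maps $\U$ onto the half-plane $\{w\in\C:\Re w>\alpha\}$, which is convex, and satisfies $g(0)=1$ and $\Re g(\zeta)>\alpha\geq 0$ for $\zeta\in\U$; hence $g$ satisfies the conditions of Assumption \ref{assump}. Since $\partial g(\U)=\{w:\Re w=\alpha\}$ and $\alpha\in[0,1)$, we have $d_1(g)={\rm dist}(1,\partial g(\U))=1-\alpha$. The assertions now follow from Propositions \ref{p.shear1} and \ref{p.estim} applied to this $g$; sharpness is witnessed by the mappings $h_{i,j}[g](z)=z\pm(1-\alpha)z_j^2e_i$, $1\leq i\neq j\leq r$, which belong to ${\mathcal M}_g(\B_X)$ by Proposition \ref{p.estim}. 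This completes the proof.
\end{proof}
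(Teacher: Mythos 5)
Your proposal is correct and follows exactly the paper's route: the paper simply records that ${\rm dist}(1,\partial g(\U))=1-\alpha$ for this $g$ and then cites Propositions \ref{p.shear1} and \ref{p.estim}, which is precisely your argument with the half-plane computation $g(\U)=\{\Re w>\alpha\}$ spelled out. No issues.
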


If $\alpha\in (0,1]$ and $g(\zeta)=\left(\frac{1-\zeta}{1+\zeta}\right)^\alpha$, $\zeta\in\U$, where we choose
the branch of the power function such that $g(0)=1$, then we obtain the following coefficient bounds for
the family ${\mathcal M}_g(\B_X)$. In this case,
${\rm dist}\,(1,\partial g(\U))=\sin(\frac{\alpha\pi}{2})$.

\begin{corollary}
\label{c.carath-strongly-star}
Let $\B_X$ be as in Definition \ref{d.shearing}.
Let $\alpha\in (0,1]$ and let $h\in {\mathcal M}_g(\B_X)$,
where $g(\zeta)=\left(\frac{1-\zeta}{1+\zeta}\right)^\alpha$, $\zeta\in\U$.
Then $h_{i,j}^{[c]}\in {\mathcal M}_g(\B_X)$
for $1\leq i\neq j\leq r$, and the following sharp estimates hold:
\[\left|\frac{1}{2}\frac{\partial^2 h_i}{\partial z_j^2}(0)
\right|\leq\sin\left(\frac{\alpha\pi}{2}\right),\quad\mbox{ for } 1\leq i\neq j\leq r.\]
\end{corollary}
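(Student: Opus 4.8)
The plan is to specialize Propositions \ref{p.shear1} and \ref{p.estim} to the particular function $g(\zeta)=\left(\frac{1-\zeta}{1+\zeta}\right)^{\alpha}$, so that the only genuine work is to verify that this $g$ meets the hypotheses of Assumption \ref{assump} and to evaluate $d_1(g)={\rm dist}(1,\partial g(\U))$.

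First I would identify the image $g(\U)$. The Cayley transform $\zeta\mapsto w=\frac{1-\zeta}{1+\zeta}$ maps $\U$ conformally onto the right half-plane $\{\Re w>0\}$ and sends $0$ to $1$; composing with the branch of $w\mapsto w^{\alpha}$ that fixes $1$ then maps this half-plane onto the open sector $\Sigma_\alpha=\{w\in\C\setminus\{0\}:|\arg w|<\alpha\pi/2\}$, again with $g(0)=1$. Since $0<\alpha\leq 1$, the opening angle $\alpha\pi$ of $\Sigma_\alpha$ is at most $\pi$, so $\Sigma_\alpha$ is convex; moreover $\Sigma_\alpha\subset\{\Re w>0\}$ because its half-angle is at most $\pi/2$. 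Hence $g$ is a convex univalent function with $g(0)=1$ and $\Re g(\zeta)>0$ on $\U$, i.e.\ $g$ satisfies Assumption \ref{assump}.

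Next I would compute the distance. The boundary $\partial\Sigma_\alpha$ is the union of the two rays through the origin at arguments $\pm\alpha\pi/2$. The Euclidean distance from the point $1$ to the ray $\{te^{i\alpha\pi/2}:t\geq 0\}$ equals the length of the perpendicular dropped from $1$ onto the line through the origin in that direction, namely $\sin(\alpha\pi/2)$; because $\alpha\pi/2\leq\pi/2$, the foot of that perpendicular lies on the ray itself, so this value is indeed the distance to the ray, and by the symmetry of $\Sigma_\alpha$ under conjugation the same holds for the other ray. Therefore $d_1(g)={\rm dist}(1,\partial g(\U))=\sin(\alpha\pi/2)$, which is consistent with the case $\alpha=1$, where $g(\U)$ is the right half-plane and $d_1(g)=1$.

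With these two facts in hand, Proposition \ref{p.shear1} immediately gives $h_{i,j}^{[c]}\in{\mathcal M}_g(\B_X)$ together with the bound $\left|\tfrac12\frac{\partial^2 h_i}{\partial z_j^2}(0)\right|\leq d_1(g)=\sin(\alpha\pi/2)$ for $1\leq i\neq j\leq r$, while Proposition \ref{p.estim} applied to the mappings $h_{i,j}[g](z)=z\pm\sin(\alpha\pi/2)\,z_j^2e_i$ shows that equality is attained, yielding sharpness. I do not expect a real obstacle: the whole substance lies in Propositions \ref{p.shear1} and \ref{p.estim}, and the only point requiring a little care is fixing the correct branch of the power function so that $g(0)=1$, thereby confirming that $g(\U)$ is the symmetric sector $\Sigma_\alpha$ rather than a rotated one.
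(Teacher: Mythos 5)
Your proposal is correct and follows exactly the route the paper intends: the corollary is stated as an immediate specialization of Propositions \ref{p.shear1} and \ref{p.estim}, with the only computation being that $g(\U)$ is the convex sector of half-angle $\alpha\pi/2$ and hence ${\rm dist}(1,\partial g(\U))=\sin(\alpha\pi/2)$, which you verify correctly (including the check that the foot of the perpendicular lies on the boundary ray). Nothing is missing.
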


Next, we point out some particular cases of Proposition \ref{p.estim} and Corollaries
\ref{c.carath1}, \ref{c.carath2}, \ref{c.carath3}, and \ref{c.carath-strongly-star},
which provide examples of $g$-starlike mappings on $\B_X$ (cf. \cite[Lemma 2.3]{Br}
and \cite{GHKK16}, in the case of the unit ball $\B^2$; see \cite{GHK18}
for $\B_X=\U^2$).

\begin{corollary}
\label{c.estim2}
Let $\B_X$ be as in Definition \ref{d.shearing}. The following statements hold:

$(i)$ Let $\alpha\in [0,1)$ and $\Phi_{i,j}^{\alpha}:\B_X\to X$ be given by
\begin{equation}
\label{support2}
\Phi_{i,j}^{\alpha}(z)=z\pm d_1(\alpha)z_j^2 e_i,\quad z=(z_1,z_2, \dots, z_n)\in\B_X,
\end{equation}
for
$1\leq i\neq j\leq r$,
where $d_1(\alpha)$ is given by $(\ref{a0-alpha})$.
Then $\Phi_{i,j}^{\alpha}\in S_\alpha^*(\B_X)$, and thus
$\Phi_{i,j}^{\alpha}\in S_\alpha^0(\B_X)$
for
$1\leq i\neq j\leq r$.

$(ii)$ Let $\alpha\in [0,1)$ and $\Psi_{i,j}^{\alpha}:\B_X\to X$ be given by
\begin{equation}
\label{support3}
\Psi_{i,j}^{\alpha}(z)=z\pm (1-\alpha)z_j^2 e_i,\quad z=(z_1,z_2, \dots, z_n)\in\B_X,
\end{equation}
for
$1\leq i\neq j\leq r$.
Then $\Psi_{i,j}^{\alpha}\in {\mathcal A}S_\alpha^*(\B_X)$, and thus
$\Psi_{i,j}^{\alpha}\in {\mathcal A}S_\alpha^0(\B_X)$
for $1\leq i\neq j\leq r$.

$(iii)$ Let $\alpha\in (0,1]$ and $\Theta_{i,j}^{\alpha}:\B_X\to X$ be given by
\begin{equation}
\label{support3-strongly-star}
\Theta_{i,j}^{\alpha}(z)=z\pm\sin\Big(\frac{\alpha\pi}{2}\Big)z_j^2 e_i,
\quad z=(z_1,z_2, \dots, z_n)\in\B_X,
\end{equation}
for
$1\leq i\neq j\leq r$.
Then $\Theta_{i,j}^{\alpha}\in SS_\alpha^*(\B_X)$, and thus
$\Theta_{i,j}^{\alpha}\in SS_\alpha^0(\B_X)$
for $1\leq i\neq j\leq r$.
\end{corollary}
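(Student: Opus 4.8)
The final statement is Corollary \ref{c.estim2}, which asserts that the explicit shear maps $\Phi_{i,j}^\alpha$, $\Psi_{i,j}^\alpha$, $\Theta_{i,j}^\alpha$ belong to the respective $g$-starlike families (and hence to the corresponding $g$-parametric-representation families).

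The plan: This should follow immediately by specializing Proposition \ref{p.estim} to the three concrete choices of $g$ listed in Remark \ref{r.examples}. For part $(i)$, take $g(\zeta)=\frac{1-\zeta}{1+(1-2\alpha)\zeta}$; then ${\mathcal M}_g(\B_X)={\mathcal M}_\alpha(\B_X)$ and $S_g^*(\B_X)=S_\alpha^*(\B_X)$ by definition, and ${\rm dist}(1,\partial g(\U))=d_1(\alpha)$ with $d_1(\alpha)$ given by $(\ref{a0-alpha})$ — this distance computation is exactly the one already invoked before Corollary \ref{c.carath2}. So $\Phi_{i,j}^\alpha = F_{i,j}[g]$ (up to the sign convention $z\pm d_1(\alpha)z_j^2e_i$, both signs being covered since $h_{i,j}[g]\in{\mathcal M}_g(\B_X)$ for either sign in Proposition \ref{p.estim}), and Proposition \ref{p.estim} gives $\Phi_{i,j}^\alpha\in S_\alpha^*(\B_X)$. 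The inclusion $S_\alpha^*(\B_X)\subseteq S_\alpha^0(\B_X)$ is Remark (iv) following Definition \ref{d.g-param}.

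For part $(ii)$, take $g(\zeta)=\frac{1-(1-2\alpha)\zeta}{1+\zeta}$; then ${\rm dist}(1,\partial g(\U))=1-\alpha$ (the computation already recorded before Corollary \ref{c.carath3}), $S_g^*(\B_X)={\mathcal A}S_\alpha^*(\B_X)$, and Proposition \ref{p.estim} yields $\Psi_{i,j}^\alpha\in{\mathcal A}S_\alpha^*(\B_X)\subseteq{\mathcal A}S_\alpha^0(\B_X)$ (the latter inclusion being Remark (v) after Definition \ref{d.g-param}). For part $(iii)$, take $g(\zeta)=\left(\frac{1-\zeta}{1+\zeta}\right)^\alpha$ with the principal branch normalized at $g(0)=1$; then $g(\U)$ is the convex sector $\{w:|\arg w|<\alpha\pi/2\}$, so ${\rm dist}(1,\partial g(\U))=\sin(\alpha\pi/2)$ (again already used before Corollary \ref{c.carath-strongly-star}), $S_g^*(\B_X)=SS_\alpha^*(\B_X)$, and Proposition \ref{p.estim} gives $\Theta_{i,j}^\alpha\in SS_\alpha^*(\B_X)\subseteq SS_\alpha^0(\B_X)$ (Remark (vi) after Definition \ref{d.g-param}).

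There is essentially no obstacle here — the only thing to verify in each case is that the chosen $g$ satisfies Assumption \ref{assump}, i.e. is convex univalent with $g(0)=1$ and $\Re g>0$ on $\U$. For $(i)$ and $(ii)$ this is the standard half-plane/disc Möbius image computation; for $(iii)$ the image is a convex sector of half-angle $\alpha\pi/2\le\pi/2$, which is convex precisely because $\alpha\le 1$. Once that is in hand, the statement is a direct transcription of Proposition \ref{p.estim} together with the three distance evaluations and the three inclusions $S_g^*\subseteq S_g^0$ from the remarks following Definition \ref{d.g-param}, so the proof is a short matter of bookkeeping rather than any new argument.
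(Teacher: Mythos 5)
Your proposal is correct and follows exactly the route the paper intends: Corollary \ref{c.estim2} is stated as a direct specialization of Proposition \ref{p.estim} to the three choices of $g$ from Remark \ref{r.examples}, using the distance evaluations $d_1(\alpha)$, $1-\alpha$, and $\sin(\alpha\pi/2)$ already recorded before Corollaries \ref{c.carath2}, \ref{c.carath3}, and \ref{c.carath-strongly-star}, together with the inclusions $S_g^*(\B_X)\subseteq S_g^0(\B_X)$. Your additional check that each $g$ satisfies Assumption \ref{assump} is the only substantive verification needed, and it is handled correctly.
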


In view of the above results, it is natural to ask the following question:

\begin{question}
\label{q1}
Let $\B_X$ be as in Definition \ref{d.shearing}.
Let $g:\U\to \C$ be a function which satisfies the conditions of Assumption
$\ref{assump}$. Let $h=(h_1,h_2,\dots, h_n)\in {\mathcal M}_g(\B_X)$.
Is it
possible to find coefficient bounds for
$\big|\frac{1}{2}\frac{\partial^2 h_i}{\partial z_i^2}(0)\big|$, $1\leq i\leq r$
or
 for
$\Big|\frac{\partial^2 h_i}{{\partial z_i}{\partial z_j}}(0)\Big|$, $1\leq i\neq j\leq r$?
\end{question}

In the following, we give an answer to the above question
in the case
$g:\U\to\C$ is a univalent holomorphic function with $g(0)=1$ and $\Re g(\z)>0$
for $\z\in\U$.
(\ref{Cara-coefficient-12b})
is a generalization of \cite[Proposition 5.1 (b)]{S2}
(cf. \cite[Proposition 4.13]{GHK18}, \cite[Theorem 3.6]{GHHKS};
compare \cite{BR}, \cite[Lemma 5]{Por1}, for
$g(\zeta)=\frac{1-\zeta}{1+\zeta}$, $\zeta\in\U$).

\begin{proposition}
\label{p.coeff-mg}
Let $\B_X$ be as in Definition \ref{d.shearing}.
Let $g:\U\to\C$ be a univalent holomorphic function with $g(0)=1$ and $\Re g(\z)>0$
for $\z\in\U$, and let
$h=(h_1,\dots, h_n)\in {\mathcal M}_g(\B_X)$.
Then
\begin{equation}
\label{coeff-mg1}
\left|
\frac{1}{2}\frac{\partial^2 h_i}{\partial z_i^2}(0)
\right|
\leq |g'(0)|,
\quad
\mbox{for } 1\leq i\leq r
\end{equation}
and
\begin{equation}
\label{Cara-coefficient-12b}
\left|\frac{\partial^2 h_i}{{\partial z_i}{\partial z_j}}(0)\right|\leq |g'(0)|,\quad
1\leq i\neq j\leq r.
\end{equation}
These estimates are sharp for all $i,j $ with $1\leq i\neq j\leq r$.
\end{proposition}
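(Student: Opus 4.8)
The plan is to reduce both inequalities to the classical subordination principle in one complex variable by restricting $h$ to suitable complex lines through the origin contained in $\B_X$, and, for the mixed second derivative, to isolate the relevant coefficient by a Cauchy estimate in an auxiliary parameter.

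For (\ref{coeff-mg1}), fix $i$ with $1\le i\le r$. Since $e_i$ is a primitive tripotent we have $\|e_i\|=1$, hence $\z e_i\in\B_X$ and $\|\z e_i\|=|\z|$ for $\z\in\U$, and by the same argument as in Lemma \ref{linear1} (now applied to the coordinate $e_i$) the functional $w\mapsto\frac{|\z|}{\z}w_i$ belongs to $T(\z e_i)$. Evaluating the defining relation of ${\mathcal M}_g(\B_X)$ along this line shows that $p(\z)=\frac{h_i(\z e_i)}{\z}$, with $p(0)=1$ (because $Dh(0)=I_n$), is a holomorphic function $\U\to\C$ with $p(\U)\subseteq g(\U)$ and $p(0)=g(0)$. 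Since $g$ is univalent, $\omega:=g^{-1}\circ p$ is a well-defined Schwarz function, so $|p'(0)|=|g'(0)|\,|\omega'(0)|\le|g'(0)|$ by the Schwarz lemma; as $p'(0)=\frac12\frac{\partial^2 h_i}{\partial z_i^2}(0)$, this gives (\ref{coeff-mg1}).

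For (\ref{Cara-coefficient-12b}), fix $i\neq j$ with $1\le i,j\le r$. Here I would use the polydisc structure of $\B_X\cap X_1$: the frame elements $e_i,e_j$ span a bidisc inside $\B_X$, so that $\|\z e_i+\omega\z e_j\|=|\z|$ for all $\z\in\U$ and all $\omega\in\overline{\U}$. Consequently $z_\omega(\z):=\z e_i+\omega\z e_j\in\B_X$, its $i$-th coordinate equals its norm, and again by the variant of Lemma \ref{linear1} the functional $w\mapsto\frac{|\z|}{\z}w_i$ lies in $T(z_\omega(\z))$. For each fixed $\omega$ the function $p_\omega(\z)=\frac{h_i(z_\omega(\z))}{\z}$ is then subordinate to $g$ exactly as above, whence $|p_\omega'(0)|\le|g'(0)|$. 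Expanding $h_i$ to second order along the slice $\C e_i+\C e_j$ one computes $p_\omega'(0)=A+B\omega+C\omega^2$, where $A=\frac12\frac{\partial^2 h_i}{\partial z_i^2}(0)$, $B=\frac{\partial^2 h_i}{\partial z_i\partial z_j}(0)$ and $C=\frac12\frac{\partial^2 h_i}{\partial z_j^2}(0)$. Thus the polynomial $P(\omega)=A+B\omega+C\omega^2$ satisfies $|P(\omega)|\le|g'(0)|$ on $\overline{\U}$, and the Cauchy estimate $B=P'(0)=\frac{1}{2\pi i}\oint_{|\omega|=1}\frac{P(\omega)}{\omega^2}\,d\omega$ yields $|B|\le\max_{|\omega|=1}|P(\omega)|\le|g'(0)|$, which is (\ref{Cara-coefficient-12b}). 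The only genuine idea is to probe $h$ along the one-parameter family of lines $z_\omega(\z)$ and then pass to a coefficient estimate in $\omega$; I expect this to be the crux, and the rest is routine.

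Sharpness is witnessed by the ``radial'' mappings $h^{[i]}(z)=g(z_i)z$ for (\ref{coeff-mg1}) and $h^{[i,j]}(z)=g(z_j)z$ for (\ref{Cara-coefficient-12b}). By Lemma \ref{estimate-trace} one has $|z_k|\le\|z\|<1$ on $\B_X$, so $z\mapsto g(z_k)$ is well defined and holomorphic; hence each such mapping is holomorphic on $\B_X$, and its Fréchet derivative at $0$ equals $g(0)I_n=I_n$, so it is normalized. It lies in ${\mathcal M}_g(\B_X)$ because $\frac{1}{\|z\|}l_z(g(z_k)z)=\frac{g(z_k)}{\|z\|}l_z(z)=g(z_k)\in g(\U)$ for every $l_z\in T(z)$. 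A direct differentiation gives $\frac12\frac{\partial^2 h^{[i]}_i}{\partial z_i^2}(0)=g'(0)$ and $\frac{\partial^2 h^{[i,j]}_i}{\partial z_i\partial z_j}(0)=g'(0)$, so equality holds in both estimates for all admissible $i,j$. The one place requiring care is the norm identity $\|\z e_i+\omega\z e_j\|=|\z|$ for $|\omega|\le1$, which is precisely where the JB$^*$-triple structure (orthogonality of the frame, giving the polydisc slice) enters.
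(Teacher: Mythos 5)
Your proof is correct and follows essentially the same route as the paper's: both arguments probe $h$ on the bidisc slice spanned by the frame elements $e_i,e_j$ (with Lemma \ref{linear1} supplying the supporting functionals), use subordination to $g$ to bound a quadratic polynomial in the slice parameter by $|g'(0)|$, and then isolate the mixed coefficient --- the paper by averaging the real part over a rotation, you by a Cauchy integral in $\omega$, which is an equivalent device. Your sharpness examples $g(z_i)z$ and $g(z_j)z$ are the same ones used in the paper.
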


\begin{proof}
Fix $w\in X$ such that $\|w\|=1$ and let $l_w\in T(w)$. Also,
let $p:\U\to\C$ given by
$p(\zeta)=\frac{1}{\zeta}l_w(h(\zeta w))$, $0<|\zeta|<1$, and $p(0)=1$.
Then $p\in H(\U)$, and since $h\in {\mathcal M}_g(\B_X)$, it is clear
that $p\prec g$. Consequently, we have that $|p'(0)|\leq |g'(0)|$. On the other hand,
it is not difficult to deduce that $p'(0)=\frac{1}{2}l_w(D^2h(0)(w,w))$, and thus
\begin{equation}
\label{coeff-mg3a}
\left|\frac{1}{2}l_w\big(D^2h(0)(w^2)\big)\right|\leq |g'(0)|.
\end{equation}

We may assume that $r=2$.
Also, let $v=(v_1,v_2,0'')\in X$ be such that $\|v\|=1$. Taking into
account Lemma \ref{linear1}, the relation (\ref{coeff-mg3a}), and the
fact that
\[
D^2h_i(0)(v,\cdot)=
\left(
\sum_{m=1}^2\frac{\partial^2 h_i}
{{\partial z_1}{\partial z_m}}(0)v_m,
\sum_{m=1}^2\frac{\partial^2 h_i}
{{\partial z_2}{\partial z_m}}(0)v_m,
\dots \right),
\]
for $i=1,2$,
we deduce that
\begin{equation}
\label{absolute}
\left|\frac{1}{2}\frac{\partial^2 h_i}{\partial z_1^2}(0)v_1^2+\frac{\partial^2 h_i}
{{\partial z_1}{\partial z_2}}(0)v_1v_2+\frac{1}{2}\frac{\partial^2 h_i}{\partial z_2^2}(0)v_2^2
\right|\leq |g'(0)|.
\end{equation}
From the above relation, we obtain (\ref{coeff-mg1}), as desired.
Also, from (\ref{absolute}), we have
\begin{equation}
\label{real}
\Re \left\{\frac{1}{2}\frac{\partial^2 h_i}{\partial z_1^2}(0)v_1^2+\frac{\partial^2 h_i}
{{\partial z_1}{\partial z_2}}(0)v_1v_2+\frac{1}{2}\frac{\partial^2 h_i}{\partial z_2^2}(0)v_2^2
\right\}\leq |g'(0)|.
\end{equation}
Let $\theta\in [0,2\pi)$ be such that
$e^{i\theta}\frac{\partial^2 h_i}
{{\partial z_1}{\partial z_2}}(0)=\left| \frac{\partial^2 h_i}
{{\partial z_1}{\partial z_2}}(0)\right|$.
Substituting $v_1=e^{i(\theta+\eta)}$ and $v_2=e^{-i\eta}$
into (\ref{real}) and integrating on $\eta\in [0,2\pi]$, we obtain (\ref{Cara-coefficient-12b})
as desired.

Finally, we prove that the relations (\ref{coeff-mg1}) and (\ref{Cara-coefficient-12b}) are sharp.
To this end, let $v=(v_1,v_2,0'')\in X$ be such that
$\|v\|=1$, and let $l_v\in T(v)$.
Also, let $h_v(z)=g(l_v(z))z$, $z\in\B_X$. Then $h_v\in {\mathcal M}_g(\B_X)$,
and after elementary computations, we obtain that
$\frac{1}{2}D^2h_v(0)(v^2)=g'(0)v$.
Next, if $v=(1,0,0'')$, then
$\frac{1}{2}\frac{\partial^2 h_1}{\partial z_1^2}(0)=g'(0)$,
while if $v=(0,1,0'')$,
then $\frac{1}{2}\frac{\partial^2 h_2}{\partial z_2^2}(0)=g'(0)$.
On the other hand,
let $H(z)=g(z_2)z$, $z\in\B_X$. Then $H\in {\mathcal M}_g(\B_X)$,
and after elementary computations, we obtain
\[
\frac{\partial^2 H_1}{\partial z_1\partial z_2}(0)=g'(0).
\]
The sharpness for $\frac{\partial^2 h_2}{\partial z_1\partial z_2}(0)$
is similar.
This completes the proof.
\end{proof}

\section{Support points for the family $S_g^0(\B_X)$}
\label{sec:support}
\setcounter{equation}{0}
\subsection{Bounded support points for $S_g^0(\B_X)$}
In the first part of this section we obtain sharp coefficient bounds for the family
$S_g^0(\B_X)$, where $g:\U\to\C$ is a convex (univalent) function on $\U$ which
satisfies the conditions of Assumption \ref{assump},
and $\B_X$ is the open unit ball
of an $n$-dimensional JB$^*$-triple $X=(\C^n, \|\cdot\|)$ of rank $r\geq 2$.
For particular choices of the function $g$, we obtain sharp coefficient bounds
for various subsets of $S^0(\B_X)$. Also, we obtain examples of bounded support
points for the family $S_g^0(\B_X)$.

This part is a continuation of recent works
on bounded support points for $S_g^0(\B^2)$ (see \cite{GHKK16}), and
in the case of the unit bidisc $\U^2$ of $\C^2$ (see \cite{GHK18}, \cite{S2}).

The following result is a generalization of \cite[Theorem 1.4]{Br}
and \cite[Theorem 4.10]{GHKK16} to the case
of $g$-Loewner chains on $\B_X\times [0,\infty)$, where
$g:\U\to\C$ satisfy the conditions of Assumption \ref{assump}.
We omit the proof of Theorem \ref{t.shearing-chain}, since it
suffices to use arguments similar to those in the proof of \cite[Theorem 4.10]{GHKK16}.

\begin{theorem}
\label{t.shearing-chain}
Let $\B_X$ be as in Definition \ref{d.shearing}.
Let $g:\U\to\C$
satisfy the conditions of Assumption
$\ref{assump}$.
Also, let $f(z,t):\B_X\times [0,\infty)\to
X$ be a $g$-Loewner chain. Then $f_{i,j}^{[c]}(z,t)$ are also $g$-Loewner chains, for
$1\leq i\neq j\leq r$.
In particular, if $f\in S_g^0(\B_X)$, then $f_{i,j}^{[c]}\in S_g^0(\B_X)$,
for $1\leq i\neq j\leq r$.
\end{theorem}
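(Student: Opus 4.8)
The plan is to reduce Theorem~\ref{t.shearing-chain} to the corresponding statements already established for the vector field rather than the chain, exactly in the spirit of \cite[Theorem 4.10]{GHKK16}. Recall that by Remark~\ref{r.loewner-eq} a $g$-Loewner chain $f(z,t)$ solves the Loewner PDE $\partial f/\partial t(z,t)=Df(z,t)h(z,t)$ a.e.\ $t\geq 0$ with $h(\cdot,t)\in\mathcal{M}_g(\B_X)$ for a.e.\ $t$. First I would fix $i\neq j$ with $1\leq i\neq j\leq r$ and, for notational simplicity, treat the case $i=1$, $j=2$, the others being identical after relabelling the frame elements $e_1,\dots,e_r$. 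The key observation is that the shearing operation $(\cdot)_{1,2}^{[c]}$ of Definition~\ref{d.shearing} interacts nicely with the Loewner equation: one shows that the sheared chain $f_{1,2}^{[c]}(z,t)$ satisfies the Loewner PDE with the sheared vector field $h_{1,2}^{[c]}(z,t)$, i.e.\
\[
\frac{\partial f_{1,2}^{[c]}}{\partial t}(z,t)=Df_{1,2}^{[c]}(z,t)\,h_{1,2}^{[c]}(z,t),\quad\text{a.e. }t\geq 0.
\]
This is the structural heart of the argument: because the shearing only retains the linear part plus the pure $z_2^2 e_1$ term, differentiating the relation $f(z,s)=f(v(z,s,t),t)$ and extracting the relevant Taylor coefficients shows that the $(1,2)$-sheared objects again form a compatible chain/vector-field pair. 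One then invokes Proposition~\ref{p.shear1}, which guarantees $h_{1,2}^{[c]}(\cdot,t)\in\mathcal{M}_g(\B_X)$ for a.e.\ $t$ whenever $h(\cdot,t)\in\mathcal{M}_g(\B_X)$, so $h_{1,2}^{[c]}$ is a Herglotz vector field taking values in $\mathcal{M}_g(\B_X)$ a.e.

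Next I would verify that $f_{1,2}^{[c]}(z,t)$ is in fact a Loewner chain (not merely a formal solution of the PDE): normalization $f_{1,2}^{[c]}(0,t)=0$, $Df_{1,2}^{[c]}(0,t)=e^t I_X$ is immediate from the definition of the shearing and the fact that $f$ has these properties; univalence of each $e^{-t}f_{1,2}^{[c]}(\cdot,t)$ follows because a map of the form $z\mapsto z+c\,z_j^2 e_i$ with $i\neq j$ is globally biholomorphic on $\C^n$ (its inverse is $w\mapsto w-c\,w_j^2 e_i$, since the $j$-th coordinate is untouched); and then by Remark~\ref{r.loewner-eq}(ii) any univalent solution of the Loewner PDE driven by a Herglotz vector field is automatically a Loewner chain. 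Normality of $\{e^{-t}f_{1,2}^{[c]}(\cdot,t)\}_{t\geq 0}$ is inherited from normality of $\{e^{-t}f(\cdot,t)\}_{t\geq 0}$ together with the coefficient bound $|q^1_{0,2}(t)|\leq d_1(g)$ from Proposition~\ref{p.shear1}, which keeps the extra quadratic term uniformly controlled. Combining these, $f_{1,2}^{[c]}(z,t)$ is a normal Loewner chain whose Herglotz vector field lies in $\mathcal{M}_g(\B_X)$ a.e., i.e.\ a $g$-Loewner chain, as required.

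For the ``in particular'' clause, suppose $f\in S_g^0(\B_X)$. By Remark after Definition~\ref{d.g-param} (namely \cite[Theorem 1.4, Lemma 1.6]{GHK02}), there is a $g$-Loewner chain $f(z,t)$ with $f=f(\cdot,0)$; applying the first part, $f_{1,2}^{[c]}(z,t)$ is a $g$-Loewner chain, and its first element $f_{1,2}^{[c]}(\cdot,0)$ is precisely the shearing $f_{1,2}^{[c]}$ of $f=f(\cdot,0)$, since the shearing only depends on $Df(0)=I_X$ and $\tfrac12\,\partial^2 f_1/\partial z_2^2(0)$. Hence $f_{1,2}^{[c]}\in S_g^0(\B_X)$, and running over all $1\leq i\neq j\leq r$ completes the argument.

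I expect the main obstacle to be the bookkeeping in the first step: checking cleanly that the sheared chain really does solve the Loewner PDE with the sheared vector field. Because $Df_{1,2}^{[c]}(z,t)$ differs from $Df(z,t)$ in all entries, one must argue that only the $(1,2)$-block of the Taylor data is relevant to the equation at the level of the retained terms, and that the cross-interactions with the truncated higher-order coefficients cancel. The proof in \cite[Theorem 4.10]{GHKK16} handles exactly this point on $\B^2$, and the only genuinely new ingredient here is that $\B_X\cap\mathrm{span}\{e_i,e_j\}$ behaves like the bidisc $\U^2$ (via Lemma~\ref{linear1} and Lemma~\ref{estimate-trace}), which is already packaged in Proposition~\ref{p.shear1}; so the transfer of the argument is routine, and as the authors note it can be omitted.
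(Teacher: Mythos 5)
Your proposal is correct and follows essentially the route the paper intends (it omits the proof, deferring to the argument of \cite[Theorem 4.10]{GHKK16}): show the sheared pair satisfies the Loewner PDE via the single coefficient identity $\beta'(t)-2\beta(t)=e^tq(t)$, invoke Proposition \ref{p.shear1} for $h_{i,j}^{[c]}(\cdot,t)\in\mathcal{M}_g(\B_X)$, and check univalence and normality of the sheared chain. One small precision: the uniform bound on $e^{-t}\bigl|\tfrac{1}{2}\tfrac{\partial^2 f_i}{\partial z_j^2}(0,t)\bigr|$ needed for normality comes from a Cauchy estimate applied to the locally uniformly bounded family $\{e^{-t}f(\cdot,t)\}_{t\geq 0}$, not from the bound $|q(t)|\leq d_1(g)$, which by itself only controls $e^{-2t}\beta(t)$.
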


Next, we prove the following sharp coefficient bounds for the family $S_g^0(\B_X)$, where
$g:\U\to\C$ satisfies the conditions of Assumption \ref{assump}
(compare \cite[Theorem 3.1]{Br} and \cite{GHKK16}, in the case of the unit ball $\B^2$ in $\C^2$;
cf. \cite[Theorem 5.2]{GHK18}, in the case of the unit bidisc $\U^2$).
Other coefficient bounds for mappings
with $g$-parametric representation may be found in \cite{XL09}.

\begin{theorem}
\label{p.extremal}
Let $\B_X$ be as in Definition \ref{d.shearing}.
Let $g:\U\to\C$ satisfy the conditions of Assumption
$\ref{assump}$ and let $d_1(g)={\rm dist}(1,\partial g(\U))$.
Let $f=(f_1,\dots, f_n)\in S_g^0(\B_X)$.
Then
\[
\left|
\frac{1}{2}\frac{\partial^2 f_i}{\partial z_j^2}(0)
\right|
\leq d_1(g),
\quad
\mbox{for }
1\leq i\neq j\leq r.
\]
These estimates are sharp, for all $i\neq j$.
\end{theorem}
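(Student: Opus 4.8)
The plan is to deduce this from the corresponding bound for the Carathéodory family $\mathcal M_g(\B_X)$ already established in Proposition \ref{p.shear1}, via the Loewner differential equation. First I would fix $f=(f_1,\dots,f_n)\in S_g^0(\B_X)$ and invoke Remark \ref{r.loewner-eq}(ii) together with Remark after Definition \ref{d.g-param}: there is a $g$-Loewner chain $f(z,t):\B_X\times[0,\infty)\to X$ with $f=f(\cdot,0)$, and an associated Herglotz vector field $h=h(z,t)$ with $h(\cdot,t)\in\mathcal M_g(\B_X)$ for a.e. $t\ge0$, satisfying $\partial f/\partial t = Df(z,t)h(z,t)$ a.e. The normalization gives $Df(0,t)=e^tI_X$, and writing the second-order Taylor coefficients of $f(\cdot,t)$ and $h(\cdot,t)$ at $0$, the Loewner PDE yields a linear ODE for the coefficients $A_{i,j}(t):=\tfrac12\tfrac{\partial^2 f_i}{\partial z_j^2}(0,t)$ of the form $A_{i,j}'(t)=2A_{i,j}(t)+e^t b_{i,j}(t)$, where $b_{i,j}(t):=\tfrac12\tfrac{\partial^2 h_i}{\partial z_j^2}(0,t)$. (Here one uses that $Df(0,t)=e^tI_X$ so the only contribution to the $z_j^2 e_i$ coefficient of $Df(z,t)h(z,t)$ coming from the linear part of $f$ is $e^t b_{i,j}(t)$; the quadratic part of $f$ contracted with $Dh(0,t)=I_X$ gives the $2A_{i,j}(t)$ term.)

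Next I would solve this ODE with the initial condition coming from the normality of the chain. Since $\{e^{-t}f(\cdot,t)\}$ is a normal family, the functions $e^{-t}A_{i,j}(t)$ are bounded in $t$; integrating $A_{i,j}'(t)-2A_{i,j}(t)=e^t b_{i,j}(t)$ and using $e^{-2t}A_{i,j}(t)\to 0$ as $t\to\infty$ (which follows from boundedness of $e^{-t}A_{i,j}(t)$), one gets the integral representation
\[
A_{i,j}(0)=-\int_0^\infty e^{-t} b_{i,j}(t)\,dt.
\]
By Proposition \ref{p.shear1}, $|b_{i,j}(t)|\le d_1(g)$ for a.e. $t\ge0$ and all $1\le i\neq j\le r$, so
\[
\Big|\tfrac12\tfrac{\partial^2 f_i}{\partial z_j^2}(0)\Big|=|A_{i,j}(0)|\le d_1(g)\int_0^\infty e^{-t}\,dt=d_1(g),
\]
which is the claimed estimate. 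Sharpness is immediate from Proposition \ref{p.estim}: the mappings $F_{i,j}[g](z)=z\mp d_1(g)z_j^2 e_i$ lie in $S_g^*(\B_X)\subseteq S_g^0(\B_X)$ and attain $|A_{i,j}(0)|=d_1(g)$.

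The main obstacle is the rigorous derivation and justification of the coefficient ODE and its solution: one must check that the Taylor coefficients of $f(\cdot,t)$ are locally absolutely continuous in $t$ (so that differentiating the PDE at $z=0$ is legitimate), that $b_{i,j}(\cdot)$ is measurable and bounded (immediate from $h(\cdot,t)\in\mathcal M_g(\B_X)$ and Proposition \ref{p.shear1}), and that the boundary term at $t=\infty$ vanishes — this last point is exactly where normality of the chain enters and must be used carefully. All of this is standard in the Loewner-theoretic literature (cf. the proof of \cite[Theorem 3.1]{Br} and \cite[Theorem 5.2]{GHK18}), so in the write-up I would carry out the ODE computation explicitly but cite those sources for the regularity facts, and the only genuinely new input is the ball-specific bound from Proposition \ref{p.shear1}, which already does the heavy lifting via Lemma \ref{linear1} and the convexity of $g$.
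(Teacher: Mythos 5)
Your proposal is correct and follows essentially the same route as the paper: the coefficient ODE $\beta'(t)-2\beta(t)=e^{t}q(t)$ extracted from the Loewner PDE, the bound $|q(t)|\leq d_1(g)$ supplied by Proposition \ref{p.shear1}, the vanishing of $e^{-2t}\beta(t)$ as $t\to\infty$ via normality of the chain, and sharpness via the mappings $F_{i,j}[g]$ of Proposition \ref{p.estim}. The only cosmetic difference is that the paper first shears the entire chain using Theorem \ref{t.shearing-chain} and then compares coefficients of the sheared chain, whereas you derive the same ODE directly from the original chain; the computation and all essential inputs are identical.
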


\begin{proof}
We only give a proof in the case $i=1$ and $j=2$.
The other cases can be proved by using similar arguments.
We prove that $|a_{2}^1|\leq d_1(g)$,
where $a_{2}^1=\frac{1}{2}\frac{\partial^2 f_1}
{\partial z_2^2}(0)$.
To this end,
we use some arguments similar to those in the proofs of \cite[Theorem 3.1]{Br}
and \cite[Theorem 4.11]{GHKK16}.
Since $f\in S_g^0(\B_X)$, there exists a $g$-Loewner
chain $f(z,t)$ such that $f=f(\cdot,0)$.
Let $h_t(z)=h(z,t)$ be the Herglotz vector field associated with $f(z,t)$.
In view of Theorem \ref{t.shearing-chain}, we deduce that the shearing $f_{1,2}^{[c]}(z,t)$
of $f(z,t)$ is a $g$-Loewner chain.
Then it is not difficult to deduce that $h_{1,2}^{[c]}(z,t)$ is the associated Herglotz
vector field of $f_{1,2}^{[c]}(z,t)$.
There exists a neighbourhood $V$ of the origin such that every holomorphic mapping on $V$
has a power series expansion on $V$.
Let
$$f_t(z)=e^{t}z+\cdots=\big(e^{t}z_1+\beta(t)z_2^2+\cdots,e^tz'+\cdots\big),\quad z\in V.$$
Then it is clear that $a_{2}^1=\beta(0)$. Also, let
$$h_t(z)=z+\cdots=\big(z_1+q(t)z_2^2+\cdots,z'+\cdots\big)$$
be the power series expansion of $h_t$ on $V$. Then
\begin{eqnarray*}
f_{1,2}^{[c]}(z,t)&=&
(e^{t}z_1+\beta(t)z_2^2,e^tz'),\quad z=(z_1,z_2, z'')\in V,\quad t\geq 0,\\
h_{1,2}^{[c]}(z,t)&=&
(z_1+q(t)z_2^2,z'), \quad z=(z_1,z_2, z'')\in V,\quad t\geq 0.
\end{eqnarray*}
Since $h_{1,2}^{[c]}(z,t)$ is the Herglotz vector field of $f_{1,2}^{[c]}(z,t)$, we deduce that
$$\frac{\partial f_{1,2}^{[c]}}{\partial t}(z,t)=Df_{1,2}^{[c]}(z,t)h_{1,2}^{[c]}(z,t),
\quad \mbox{ a.e. }\, t\geq 0,\quad \forall\, z\in V.$$
Identifying the coefficients in both sides of the above equality, we obtain that
$$\beta'(t)-2\beta(t)=e^tq(t),\quad \mbox{ a.e. }\, t\geq 0.$$
Therefore, we obtain
\[
\frac{d}{dt}\left(e^{-2t}\beta(t)\right)=e^{-t}q(t),
\quad \mbox{ a.e. }\quad t\geq 0.
\]
Integrating both sides of the above equality from $0$ to $t$, and using the fact that
$\beta(0)=a_{2}^1$, we deduce that
\begin{equation}
\label{estimate-integral}
e^{-2t}\beta(t)-a_{2}^1=\int_0^te^{-\tau}q(\tau)d\tau,\quad t\geq 0.
\end{equation}
On the other hand, in view of Proposition \ref{p.shear1},
we deduce that $|q(t)|\leq d_1(g)$, for a.e. $t\geq 0$.
Hence, in view of (\ref{estimate-integral}),
we deduce that
\begin{equation}
\label{estimate-integral2}
\left|e^{-2t}\beta(t)-a_{2}^1\right|\leq d_1(g)(1-e^{-t}),\quad t\geq 0.
\end{equation}
Next, since $\beta(t)=\frac{1}{2}\frac{\partial^2 f_{1,2}^{[c]}}{\partial z_2^2}(0,t)$ and
$\{e^{-t}f_{1,2}^{[c]}(\cdot,t)\}_{t\geq 0}$ is a normal family on $\B_X$, we obtain that
$\lim_{t\to\infty}e^{-2t}\beta(t)=0$, by using an argument similar to that in the proof
of \cite[Theorem 4.11]{GHKK16}.
Letting $t\to\infty$ in (\ref{estimate-integral2}), we deduce that
$|a_{2}^1|\leq d_1(g)$, as desired.
Sharpness of this relation is provided by
the mapping $F_{1,2}[g]\in S_g^0(\B_X)$ given by (\ref{extr-map}).
This completes the proof.
\end{proof}

From Theorem \ref{p.extremal}, we obtain the following sharp coefficient bounds for various subsets
of $S^0(\B_X)$
(see \cite{GHK18}, in the case $\B_X=\U^2$; see also \cite[Theorem 4.3]{S2}, in the case $\alpha=0$
and $\B_X=\U^2$; cf. \cite{Br}, \cite{BR},
\cite{GHKK16}, in the case of the Euclidean unit ball $\B^2$).

\begin{corollary}
\label{c.alpha}
Let $\B_X$ be as in Definition \ref{d.shearing}. The following statements hold:

$(i)$ Let $\alpha\in [0,1)$ and let
$f=(f_1,f_2, \dots, f_n)\in S_\alpha^0(\B_X)$.
Also, let $d_1(\alpha)$ be given by
$(\ref{a0-alpha})$. Then
\[
\left|
\frac{1}{2}\frac{\partial^2 f_i}{\partial z_j^2}(0)
\right| \leq d_1(\alpha),
\quad
\mbox{for }
1\leq i\neq j\leq r.
\]
These estimates are sharp, for all $i\neq j$.

In particular, if $f\in S^0(\B_X)$, then the following sharp
estimates hold:
\[
\left|
\frac{1}{2}\frac{\partial^2 f_i}{\partial z_j^2}(0)
\right| \leq 1,
\quad
\mbox{for }
1\leq i\neq j\leq r.
\]

$(ii)$ 
Let $\alpha\in [0,1)$ and let
$f=(f_1,f_2, \dots, f_n)\in {\mathcal A}S_\alpha^0(\B_X)$.
Then
\[
\left|
\frac{1}{2}\frac{\partial^2 f_i}{\partial z_j^2}(0)
\right|
\leq 1-\alpha,
\quad
\mbox{for }
1\leq i\neq j\leq r.
\]
These estimates are sharp, for all $i\neq j$.

$(iii)$ 
Let $\alpha\in (0,1]$and let
$f=(f_1,f_2, \dots, f_n)\in SS_\alpha^0(\B_X)$.
Then
\[
\left|
\frac{1}{2}\frac{\partial^2 f_i}{\partial z_j^2}(0)
\right|
\leq \sin\left(\frac{\alpha\pi}{2}\right),
\quad \mbox{ for } 1\leq i\neq j\leq r.
\]
These estimates are sharp, for all $i\neq j$.
\end{corollary}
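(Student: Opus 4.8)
The plan is to obtain all three estimates, together with their sharpness, as immediate specializations of Theorem \ref{p.extremal}, which already delivers the sharp bound $\big|\frac{1}{2}\frac{\partial^2 f_i}{\partial z_j^2}(0)\big|\le d_1(g)$ for every $g$ obeying Assumption \ref{assump}. For each item it thus suffices to: (a) name the function $g$ for which $S_g^0(\B_X)$ coincides, by Definition \ref{d.g-param}, with the family in the statement; (b) verify that this $g$ satisfies Assumption \ref{assump}; (c) recall the value of $d_1(g)={\rm dist}(1,\partial g(\U))$; and (d) exhibit an explicit mapping attaining the bound.

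For $(i)$ I take $g(\zeta)=\frac{1-\zeta}{1+(1-2\alpha)\zeta}$. Being a M\"obius transformation it is univalent and carries $\U$ onto a disc or half-plane contained in the right half-plane, hence it is convex, with $g(0)=1$ and $\Re g>0$; by Definition \ref{d.g-param} one has $S_g^0(\B_X)=S_\alpha^0(\B_X)$, while $d_1(g)=d_1(\alpha)$ as recorded in Section \ref{sec:coeff}, formula (\ref{a0-alpha}). Theorem \ref{p.extremal} then yields the inequality, and the choice $\alpha=0$ (so that $g(\zeta)=\frac{1-\zeta}{1+\zeta}$, $S_g^0(\B_X)=S^0(\B_X)$, $d_1(0)=1$) gives the ``in particular'' assertion. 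For $(ii)$ I take $g(\zeta)=\frac{1-(1-2\alpha)\zeta}{1+\zeta}$, again a M\"obius map fulfilling Assumption \ref{assump}, for which $S_g^0(\B_X)={\mathcal A}S_\alpha^0(\B_X)$ and $d_1(g)=1-\alpha$ (recorded in Section \ref{sec:coeff}). For $(iii)$ I take $g(\zeta)=\left(\frac{1-\zeta}{1+\zeta}\right)^\alpha$ with the branch fixed by $g(0)=1$; this is the Cayley map onto $\{\Re w>0\}$ followed by $w\mapsto w^\alpha$, so $g(\U)$ is the angular sector $\{w:|\arg w|<\alpha\pi/2\}$, which is convex precisely because $\alpha\le1$ keeps its opening at most $\pi$, while $g(0)=1$ and $\Re g>0$. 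Then $S_g^0(\B_X)=SS_\alpha^0(\B_X)$ and $d_1(g)=\sin(\alpha\pi/2)$ (again recorded in Section \ref{sec:coeff}), and Theorem \ref{p.extremal} applies once more.

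For sharpness I would invoke the mappings of Corollary \ref{c.estim2}: for any $F(z)=z\pm c\,z_j^2e_i$ one has $\frac{1}{2}\frac{\partial^2 F_i}{\partial z_j^2}(0)=\pm c$, so $\Phi_{i,j}^{\alpha}$ (with $c=d_1(\alpha)$), $\Psi_{i,j}^{\alpha}$ (with $c=1-\alpha$) and $\Theta_{i,j}^{\alpha}$ (with $c=\sin(\alpha\pi/2)$) belong to $S_\alpha^0(\B_X)$, ${\mathcal A}S_\alpha^0(\B_X)$ and $SS_\alpha^0(\B_X)$ respectively and realize the relevant bound; alternatively one may quote directly the sharpness clause of Theorem \ref{p.extremal} together with the extremal map (\ref{extr-map}).

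There is essentially no new analytic content at this stage, since the coefficient estimate for $S_g^0(\B_X)$ and the three distance computations are already in hand. The only steps requiring care are the identifications $S_g^0(\B_X)=S_\alpha^0(\B_X)$, ${\mathcal A}S_\alpha^0(\B_X)$, $SS_\alpha^0(\B_X)$, which must be read off Definition \ref{d.g-param} and hold on all of $\B_X$ (not merely on the two-dimensional slice used to prove the bound), and the convexity check in $(iii)$: the sector $\{w:|\arg w|<\alpha\pi/2\}$ fails to be convex as soon as $\alpha>1$, so the hypothesis $\alpha\in(0,1]$ is genuinely needed there. I expect that convexity verification to be the only point at which one must pause, and even that is routine.
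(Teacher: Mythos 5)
Your proposal is correct and follows exactly the route the paper intends: Corollary \ref{c.alpha} is stated as an immediate specialization of Theorem \ref{p.extremal} to the three choices of $g$, using the values of $d_1(g)$ already recorded in Section \ref{sec:coeff} and the extremal mappings of Corollary \ref{c.estim2} (equivalently, the maps (\ref{extr-map})) for sharpness. Your added checks of Assumption \ref{assump} for each $g$, including the convexity of the sector in case $(iii)$, are consistent with what the paper takes for granted.
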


Taking into account Theorem \ref{p.extremal}, we obtain the following result,
which provides examples of bounded support points for the families $S_g^0(\B_X)$
and $S_g^*(\B_X)$
(see \cite{GHK18} and \cite{S2}, in the
case of the unit bidisc $\U^2$;
compare \cite{Br}, \cite{GHKK15}, \cite{GHKK16}, in the case of the unit ball $\B^2$).

\begin{theorem}
\label{r.supp-bounded}
Let $\B_X$ be as in Definition \ref{d.shearing}.
Let $g:\U\to\C$ satisfy the conditions of Assumption $\ref{assump}$ and let
$d_1(g)={\rm dist}(1,\partial g(\U))$. Let
\begin{equation}
\label{supp3}
F_{i,j}[g](z)=z+d_1(g)z_j^2 e_i,\quad z\in\B_X,
\end{equation}
for $1\leq i\neq j\leq r$.
Then $F_{i,j}[g]\in S_g^*(\B_X)$ are bounded support points for $S_g^0(\B_X)$.
In particular, $F_{i,j}[g]$ are also bounded support points for $S_g^*(\B_X)$,
for $1\leq i\neq j\leq r$.
\end{theorem}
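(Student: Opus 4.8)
The plan is to produce, for each pair $1\le i\ne j\le r$, an explicit continuous linear functional $\Lambda$ on $H(\B_X)$ that detects the extremal coefficient $\frac12\frac{\partial^2 f_i}{\partial z_j^2}(0)$, and to invoke Theorem \ref{p.extremal} to conclude that $F_{i,j}[g]$ maximizes $\Re\Lambda$ over $S_g^0(\B_X)$. Concretely, I fix $i\ne j$ and set
\[
\Lambda(f)=\tfrac12\,\ell_i\!\left(D^2 f(0)(e_j,e_j)\right)=\tfrac12\,\frac{\partial^2 f_i}{\partial z_j^2}(0),
\]
where $\ell_i\in L(X,\C)$ is the coordinate functional $w\mapsto w_i$ relative to the orthogonal basis $e_1,\dots,e_n$. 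This $\Lambda$ is linear and continuous on $H(\B_X)$ with respect to the topology of locally uniform convergence, since it reads off a fixed Taylor coefficient. By Proposition \ref{p.estim} we already know $F_{i,j}[g]\in S_g^*(\B_X)\subseteq S_g^0(\B_X)$, so it is a legitimate competitor, and by construction $\Lambda(F_{i,j}[g])=d_1(g)$.

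The next step is to show $\Re\Lambda(f)\le d_1(g)$ for every $f\in S_g^0(\B_X)$: this is immediate from Theorem \ref{p.extremal}, which gives $|\Lambda(f)|=\big|\tfrac12\frac{\partial^2 f_i}{\partial z_j^2}(0)\big|\le d_1(g)$, hence $\Re\Lambda(f)\le d_1(g)$, with equality attained at $F_{i,j}[g]$. Thus $F_{i,j}[g]$ realizes $\max\{\Re\Lambda(q):q\in S_g^0(\B_X)\}$. It remains only to check that $\Re\Lambda$ is not constant on $S_g^0(\B_X)$: the identity map $\mathrm{id}\in S_g^0(\B_X)$ has $\Lambda(\mathrm{id})=0\ne d_1(g)=\Lambda(F_{i,j}[g])$, and $d_1(g)={\rm dist}(1,\partial g(\U))>0$ because $g(0)=1$ and $g(\U)$ is open, so $\Re\Lambda$ indeed takes at least two values on $S_g^0(\B_X)$. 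Therefore $F_{i,j}[g]\in {\rm supp}\,S_g^0(\B_X)$ by Definition \ref{d.support}(i). Since $F_{i,j}[g]\in S_g^*(\B_X)$ and $S_g^*(\B_X)\subseteq S_g^0(\B_X)$, the same functional $\Lambda$ witnesses that $F_{i,j}[g]$ is a support point of $S_g^*(\B_X)$ as well; boundedness of $F_{i,j}[g]$ on $\B_X$ is clear since $z\mapsto z+d_1(g)z_j^2e_i$ maps the bounded set $\B_X$ into a bounded set.

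I do not anticipate a genuine obstacle here: the theorem is essentially a repackaging of the sharp coefficient estimate in Theorem \ref{p.extremal} together with the example in Proposition \ref{p.estim}, so the only things to be careful about are (a) verifying that the coefficient functional $\Lambda$ is continuous on $H(\B_X)$ in the locally uniform topology — which follows from the Cauchy estimates, expressing $\Lambda(f)$ as an integral of $f$ over a small polydisc/sphere inside $V\subseteq\B_X$ — and (b) confirming $d_1(g)>0$ so that non-constancy of $\Re\Lambda$ is not vacuous. If one wants the support point to be nontrivial one should also note $d_1(g)>0$ guarantees $F_{i,j}[g]\ne\mathrm{id}$. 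All the analytic content (compactness of $S_g^0(\B_X)$, the Loewner-chain machinery, the shearing invariance) has already been absorbed into Theorem \ref{p.extremal}, so this last theorem is a short corollary. This completes the proof.
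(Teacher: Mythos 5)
Your proposal is correct and follows essentially the same route as the paper: the same coefficient functional $L_{i,j}(f)=\frac12\frac{\partial^2 f_i}{\partial z_j^2}(0)$, the same appeal to Theorem \ref{p.extremal} for the upper bound and to Proposition \ref{p.estim} for membership of $F_{i,j}[g]$ in $S_g^*(\B_X)$, and the same use of the identity map to show $\Re L_{i,j}$ is nonconstant on $S_g^0(\B_X)$. Your additional remarks on continuity of the functional and on $d_1(g)>0$ are correct and only make explicit what the paper leaves implicit.
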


\begin{proof}
Let $i,j\in \mathbb{Z}$ with $1\leq i\neq j\leq r$ be fixed
and let $L_{i,j}: H(\B_X)\to\C$ be given by
$$L_{i,j}(f)=\frac{1}{2}\frac{\partial^2 f_i}{\partial z_j^2}(0),\quad
f=(f_1,f_2, \dots, f_n)\in H(\B_X).$$
Then $\Re L_{i,j}$ is a continuous linear functional on $H(\B_X)$,
and in view of Theorem \ref{p.extremal}, we obtain that
$$\Re L_{i,j}(q)\leq d_1(g)=\Re L(F_{i,j}[g]),\quad q\in S_g^0(\B_X),$$
where $F_{i,j}[g]\in S_g^*(\B_X)\subset S_g^0(\B_X)$ is given by (\ref{supp3}).
Since $\Re L_{i,j}({\rm id}_{\B_X})=0<d_1(g)=\Re L(F_{i,j}[g])$ and
${\rm id}_{\B_X}\in S_g^0(\B_X)$,
it follows that $\Re L_{i,j}|_{S_g^0(\B_X)}$ is not constant.
Hence $F_{i,j}[g]$ is a support point for the family $S_g^0(\B_X)$.
It is clear that the mapping $F_{i,j}[g]$ is bounded on $\B_X$.
Also, since $F_{i,j}[g]\in S_g^*(\B_X)$, the above arguments imply that
$F_{i,j}[g]$ is also a support point for the family $S_g^*(\B_X)$, as desired.
\end{proof}

\begin{remark}
Let $\B_X$ be as in Definition \ref{d.shearing}.
Let $g:\U\to\C$ satisfy the conditions of Assumption $\ref{assump}$ and let
$d_1(g)={\rm dist}(1,\partial g(\U))$.
Using arguments similar to those in the
proof of Theorem \ref{r.supp-bounded}, we deduce that if
$$\widehat{F}_{i,j}[g](z)=z-d_1(g)z_j^2 e_i,\quad z\in\B_X,$$
for $1\leq i\neq j\leq r$, then
$\widehat{F}_{i,j}[g]$ are bounded support points for $S_g^0(\B_X)$
and $S_g^*(\B_X)$.
\end{remark}

From Theorem \ref{r.supp-bounded} we obtain the following consequence
(see \cite[Remark 5.6]{GHK18}, in the case $\B_X=\U^2$; see \cite{S2}, for
$\alpha=0$; cf. \cite{Br} and \cite{GHKK16}, in the case of the ball $\B^2$ in $\C^2$).

\begin{corollary}
\label{cor.supp-bounded}
Let $\B_X$ be as in Definition \ref{d.shearing}. The following statements hold:

$(i)$ Let $\alpha\in [0,1)$ and let $\Phi_{i,j}^{\alpha}$ be the mapping given by 
$(\ref{support2})$.
Then $\Phi_{i,j}^{\alpha}$ are bounded support points for the family $S_\alpha^0(\B_X)$,
for $1\leq i\neq j\leq r$. Moreover, $\Phi_{i,j}^{\alpha}$
are also bounded support points for the family $S_\alpha^*(\B_X)$, for $1\leq i\neq j\leq r$.

$(ii)$ 
Let $\alpha\in [0,1)$ and let $\Psi_{i,j}^{\alpha}$ be the mapping given by $(\ref{support3})$.
Then $\Psi_{i,j}^{\alpha}$ are bounded support points for the family
$ {\mathcal A}S_\alpha^0(\B_X)$
for $1\leq i\neq j\leq r$. 
Moreover,
$\Psi_{i,j}^{\alpha}$ are bounded support points for the family ${\mathcal A}S_\alpha^*(\B_X)$
for $1\leq i\neq j\leq r$.

In particular, if
\begin{equation}
\label{extremal1}
\Phi_{i,j}(z)=z+z_j^2 e_i,\quad z\in\B_X,
\end{equation}
then $\Phi_{i,j}$ are bounded support points for $S^0(\B_X)$ and $S^*(\B_X)$,
for $1\leq i\neq j\leq r$

$(iii)$ 
Let $\alpha\in (0,1]$ and let $\Theta_{i,j}^{\alpha}:\B_X\to X$ be given by
$(\ref{support3-strongly-star})$. Then $\Theta_{i,j}^\alpha$ are bounded support
points for the family $SS_\alpha^0(\B_X)$, for $1\leq i\neq j\leq r$.
Moreover,
$\Theta_{i,j}^\alpha$ are bounded support points for the family $SS_\alpha^*(\B_X)$ for
$1\leq i\neq j\leq r$.
\end{corollary}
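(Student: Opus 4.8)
The plan is to obtain all three parts as immediate specializations of Theorem \ref{r.supp-bounded} (together with the Remark following it, which supplies the minus-sign version), applied to the three concrete functions $g$ appearing in Remark \ref{r.examples} and Definition \ref{d.g-param}. For each choice of $g$ the only substantive point is to verify that $g$ meets the hypotheses of Theorem \ref{r.supp-bounded}, namely the conditions of Assumption \ref{assump}; the rest is a matter of matching notation.

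For (i) I would take $g(\zeta)=\frac{1-\zeta}{1+(1-2\alpha)\zeta}$. This is a M\"obius transformation with $g(0)=1$; solving for $\zeta$ shows that $g(\U)$ is the disc of center and radius $\frac{1}{2\alpha}$ when $\alpha\in(0,1)$, and the half-plane $\{\Re w>0\}$ when $\alpha=0$, so in all cases $g$ is convex univalent with $\Re g>0$ on $\U$, i.e., it satisfies Assumption \ref{assump}. The value $d_1(g)={\rm dist}(1,\partial g(\U))=d_1(\alpha)$, with $d_1(\alpha)$ as in (\ref{a0-alpha}), was recorded before Corollary \ref{c.carath2}; moreover $S_g^0(\B_X)=S_\alpha^0(\B_X)$ by Definition \ref{d.g-param} and $S_g^*(\B_X)=S_\alpha^*(\B_X)$ by Remark \ref{r.examples}(i). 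Since $\Phi_{i,j}^\alpha$ in (\ref{support2}) is exactly $F_{i,j}[g]$, up to the sign handled by the Remark after Theorem \ref{r.supp-bounded}, that theorem yields that $\Phi_{i,j}^\alpha\in S_\alpha^*(\B_X)$ is a bounded support point of both $S_\alpha^0(\B_X)$ and $S_\alpha^*(\B_X)$, for $1\le i\ne j\le r$. Specializing to $\alpha=0$, where $d_1(0)=1$, $S_0^0(\B_X)=S^0(\B_X)$ and $S_0^*(\B_X)=S^*(\B_X)$, gives the displayed mapping $\Phi_{i,j}$ in (\ref{extremal1}) as a bounded support point of $S^0(\B_X)$ and $S^*(\B_X)$.

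Parts (ii) and (iii) proceed in the same way. For (ii) I would use $g(\zeta)=\frac{1-(1-2\alpha)\zeta}{1+\zeta}$, again a M\"obius map with $g(0)=1$ whose image is the half-plane $\{\Re w>\alpha\}$, hence convex with $\Re g>0$; then ${\rm dist}(1,\partial g(\U))=1-\alpha$ (recorded before Corollary \ref{c.carath3}), $S_g^0(\B_X)={\mathcal A}S_\alpha^0(\B_X)$, $S_g^*(\B_X)={\mathcal A}S_\alpha^*(\B_X)$, and $\Psi_{i,j}^\alpha$ in (\ref{support3}) equals $F_{i,j}[g]$, so Theorem \ref{r.supp-bounded} finishes it. For (iii) I would take $g(\zeta)=\left(\frac{1-\zeta}{1+\zeta}\right)^\alpha$ with the principal branch; its image is the sector $\{w:|\arg w|<\frac{\alpha\pi}{2}\}$, which for $\alpha\in(0,1]$ is convex (opening $\le\pi$) and contained in $\{\Re w>0\}$, so $g$ satisfies Assumption \ref{assump}, with ${\rm dist}(1,\partial g(\U))=\sin\!\big(\frac{\alpha\pi}{2}\big)$, the distance from $1$ to a side of the sector (recorded before Corollary \ref{c.carath-strongly-star}), $S_g^0(\B_X)=SS_\alpha^0(\B_X)$, $S_g^*(\B_X)=SS_\alpha^*(\B_X)$, and $\Theta_{i,j}^\alpha$ in (\ref{support3-strongly-star}) equal to $F_{i,j}[g]$; Theorem \ref{r.supp-bounded} then applies. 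I do not expect a genuine obstacle: the only part that is more than bookkeeping is confirming that each $g$ is a convex univalent function with $g(0)=1$ and $\Re g>0$, and these are classical facts already used implicitly in Corollaries \ref{c.carath2}--\ref{c.carath-strongly-star}; the mildest care is needed in (iii), where convexity of $g$ reduces to the circular sector of opening $\alpha\pi\le\pi$ being convex. With those observations in hand, the corollary follows word for word from Theorem \ref{r.supp-bounded} and the Remark that follows it.
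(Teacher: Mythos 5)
Your proposal is correct and matches the paper's approach: the paper derives this corollary directly from Theorem \ref{r.supp-bounded} (and the remark following it for the minus sign) by specializing to the three choices of $g$, whose membership in Assumption \ref{assump} and whose values of $d_1(g)$ were already recorded before Corollaries \ref{c.carath2}--\ref{c.carath-strongly-star}. Your verifications of convexity and of the distances $d_1(\alpha)$, $1-\alpha$, and $\sin(\alpha\pi/2)$ are exactly the bookkeeping the paper leaves implicit.
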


In view of the above arguments, it would be interesting to give an answer to the
following question:

\begin{question}
\label{q.extreme1}
Let $\B_X$ be as in Definition \ref{d.shearing}.
Let $g:\U\to\C$ satisfy the conditions of Assumptions $\ref{assump}$. Let
$d_1(g)={\rm dist}(1,\partial g(\U))$.
Let $F_{i,j}[g]:\B_X\to\C^n$ be given by $(\ref{supp3})$. Is it true that
$F_{i,j}[g]\in {\rm ex}\,S_g^0(\B_X)$? In particular, if $\Phi_{i,j}$ is given by
$(\ref{extremal1})$, then is it true that $\Phi_{i,j}\in {\rm ex}\,S^0(\B_X)$?
\end{question}

Taking into account Theorem \ref{p.extremal}, it is natural to ask the following question:

\begin{question}
\label{q.coeff-estim}
Let $\B_X$ be as in Definition \ref{d.shearing}.
Let $g:\U\to \C$ satisfy the conditions of Assumption
$\ref{assump}$. Let $f=(f_1,f_2,\dots, f_n)\in S_g^0(\B_X)$.
Is it possible to find sharp coefficient bounds for
$\big|\frac{1}{2}\frac{\partial^2 f_i}{\partial z_i^2}(0)\big|$, $1\leq i\leq r$
or for
$\Big|\frac{\partial^2 f_i}{{\partial z_i}{\partial z_j}}(0)\Big|$, $1\leq i\neq j\leq r$?
\end{question}

In the following result, we give a positive answer to the above question
in the case
$g:\U\to\C$ is a univalent holomorphic function with $g(0)=1$ and $\Re g(\z)>0$
for $\z\in\U$.
(\ref{coeff-gparam1b})
is a generalization of \cite[Theorem 4.3 (2)]{S2}
(cf. \cite[Theorem 5.7]{GHK18}, \cite[Theorem 2.14]{GHK02}, \cite[Theorem 10]{GHK14};
cf. \cite{BR}, \cite[Theorem 3]{Por1},
for $g(\zeta)=\frac{1-\zeta}{1+\zeta}$, $\zeta\in\U$).

\begin{theorem}
\label{t.coeff2}
Let $\B_X$ be as in Definition \ref{d.shearing}.
Let $g:\U\to\C$ be a univalent holomorphic function such that $g(0)=1$ and
$\Re g(\zeta)>0$, $\zeta\in\U$.
Also, let $f\in S_g^0(\B_X)$. Then
\begin{equation}
\label{coeff-gparam1}
\left|
\frac{1}{2}\frac{\partial^2 f_i}{\partial z_i^2}(0)
\right|
\leq |g'(0)|,
\quad
\mbox{for } 1\leq i\leq r
\end{equation}
and
\begin{equation}
\label{coeff-gparam1b}
\left|
\frac{\partial^2 f_i}{{\partial z_i}{\partial z_j}}(0)\right|
\leq |g'(0)|, \quad 1\leq i\neq j\leq r.
\end{equation}
These estimates are sharp for all $i \neq j$.
\end{theorem}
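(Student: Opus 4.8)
The plan is to adapt the proof of Theorem \ref{p.extremal}, replacing its off-diagonal shearing coefficient by the diagonal coefficient $\frac{1}{2}\frac{\partial^2 f_i}{\partial z_i^2}(0)$ for (\ref{coeff-gparam1}) and by the mixed coefficient $\frac{\partial^2 f_i}{\partial z_i\partial z_j}(0)$ for (\ref{coeff-gparam1b}), and using Proposition \ref{p.coeff-mg} in place of Proposition \ref{p.shear1}. Fix $1\le i\le r$, and for the mixed estimate also $1\le j\le r$ with $j\ne i$. Since $f\in S_g^0(\B_X)$, choose a normal Loewner chain $f(z,t)$ solving (\ref{loewner}) with generating vector field $h(z,t)$ such that $h(\cdot,t)\in\mathcal{M}_g(\B_X)$ for a.e.\ $t\ge0$ and $f=f(\cdot,0)$. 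On a fixed neighbourhood of the origin write
\[
f_i(z,t)=e^{t}z_i+\beta_i(t)z_i^2+\gamma_{ij}(t)z_iz_j+\cdots,\qquad h_i(z,t)=z_i+q_i(t)z_i^2+p_{ij}(t)z_iz_j+\cdots,
\]
so that $\beta_i(0)=\frac{1}{2}\frac{\partial^2 f_i}{\partial z_i^2}(0)$, $\gamma_{ij}(0)=\frac{\partial^2 f_i}{\partial z_i\partial z_j}(0)$, $q_i(t)=\frac{1}{2}\frac{\partial^2 h_i}{\partial z_i^2}(0,t)$ and $p_{ij}(t)=\frac{\partial^2 h_i}{\partial z_i\partial z_j}(0,t)$.

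The main observation is that the homogeneous degree-two part of $Df(z,t)h(z,t)$ equals $e^{t}$ times the degree-two part of $h(z,t)$ plus twice the degree-two part of $f(z,t)$ (Euler's identity applied to the latter). Reading off the coefficients of $z_i^2$ and of $z_iz_j$ in the $i$-th component of (\ref{loewner}), the scalar equations for $\beta_i$ and $\gamma_{ij}$ therefore decouple from all other Taylor coefficients and become
\[
\frac{d}{dt}\bigl(e^{-2t}\beta_i(t)\bigr)=e^{-t}q_i(t),\qquad \frac{d}{dt}\bigl(e^{-2t}\gamma_{ij}(t)\bigr)=e^{-t}p_{ij}(t)
\]
for a.e.\ $t\ge0$. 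Integrating from $0$ and letting $t\to\infty$, normality of $\{e^{-t}f(\cdot,t)\}_{t\ge0}$ forces $e^{-2t}\beta_i(t)\to0$ and $e^{-2t}\gamma_{ij}(t)\to0$ (exactly as in the proof of Theorem \ref{p.extremal}), hence
\[
\Bigl|\frac{1}{2}\frac{\partial^2 f_i}{\partial z_i^2}(0)\Bigr|=\Bigl|\int_0^\infty e^{-\tau}q_i(\tau)\,d\tau\Bigr|,\qquad \Bigl|\frac{\partial^2 f_i}{\partial z_i\partial z_j}(0)\Bigr|=\Bigl|\int_0^\infty e^{-\tau}p_{ij}(\tau)\,d\tau\Bigr|.
\]
Since $h(\cdot,t)\in\mathcal{M}_g(\B_X)$ for a.e.\ $t$, Proposition \ref{p.coeff-mg} — namely (\ref{coeff-mg1}) and (\ref{Cara-coefficient-12b}), which is precisely where the restriction $1\le i,j\le r$ is used — yields $|q_i(t)|\le|g'(0)|$ and $|p_{ij}(t)|\le|g'(0)|$; as $\int_0^\infty e^{-\tau}\,d\tau=1$, the estimates (\ref{coeff-gparam1}) and (\ref{coeff-gparam1b}) follow.

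For sharpness I would run the same computation with the autonomous generating vector field $h(z,t)\equiv g(z_i)z$ for (\ref{coeff-gparam1}), respectively $h(z,t)\equiv g(z_j)z$ for (\ref{coeff-gparam1b}). Such a field lies in $\mathcal{M}_g(\B_X)$: for $z\in\B_X\setminus\{0\}$ and $l_z\in T(z)$ one has $\frac{1}{\|z\|}l_z(g(z_i)z)=g(z_i)\in g(\U)$, because $e_i$ is a primitive tripotent and hence $|z_i|\le\|z\|<1$ by Lemma \ref{estimate-trace}. For this field $q_i(t)\equiv g'(0)$ (respectively $p_{ij}(t)\equiv g'(0)$), so the associated mapping $f\in S_g^0(\B_X)$ satisfies $\frac{1}{2}\frac{\partial^2 f_i}{\partial z_i^2}(0)=-g'(0)$ (respectively $\frac{\partial^2 f_i}{\partial z_i\partial z_j}(0)=-g'(0)$), of modulus $|g'(0)|$; in fact this $f$ is $g$-starlike, being the solution of $Df(z)h(z)=f(z)$, explicitly $f(z)=\exp\!\bigl(\int_0^{z_i}\frac{1-g(w)}{w\,g(w)}\,dw\bigr)z$ (respectively with $z_i$ replaced by $z_j$).

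I expect the only delicate points to be the two verifications already present in the proof of Theorem \ref{p.extremal}: that the coefficient equations extracted from (\ref{loewner}) are genuinely self-contained — guaranteed by the degree-two-part identity above — and that $e^{-2t}\beta_i(t)$ and $e^{-2t}\gamma_{ij}(t)$ tend to $0$ by normality. The ingredient that is special to $\B_X$, and that confines the sharp estimates to indices $i,j\le r$, is Proposition \ref{p.coeff-mg}, which in turn relies on Lemma \ref{linear1} and on the fact that $\B_X$ contains a copy of the unit polydisc $\U^r$.
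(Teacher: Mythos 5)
Your argument is correct, and it reaches the bounds by a route that is organized differently from the paper's. You work with the Loewner PDE (\ref{loewner}) for the chain itself, extract the scalar ODEs for the individual Taylor coefficients $\beta_i(t)$ and $\gamma_{ij}(t)$ (exactly as the paper does for the off-diagonal coefficient in Theorem \ref{p.extremal}), and then insert the already-polarized bounds (\ref{coeff-mg1}) and (\ref{Cara-coefficient-12b}) for $h(\cdot,t)$ under the integral. The paper instead works with the Loewner ODE for the transition mappings $v(\cdot,t)$, tracks the coordinate-free quantity $Q(t)(w^2)=\frac{1}{2}D^2v_t(0)(w^2)$, uses only the unpolarized subordination estimate (\ref{coeff-mg3a}) inside the integral, and polarizes at the very end, obtaining first the inequality $\bigl|\frac{1}{2}l_w(D^2f(0)(w^2))\bigr|\leq|g'(0)|$ for every unit vector $w$ and $l_w\in T(w)$. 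The two orders of operations are logically equivalent here, but the paper's order has a concrete payoff: the intermediate inequality (\ref{second-coeff-estimate}) is valid for the unit ball of an arbitrary norm on $\C^n$ (Remark \ref{second-remark}) and is reused in the proof of Theorem \ref{t.unbounded-sup}, whereas your coefficient-by-coefficient version is tied from the outset to the frame coordinates and to Lemma \ref{linear1}. Your approach buys uniformity with the proof of Theorem \ref{p.extremal} and avoids introducing $Q(t)$. Two small points to make explicit if you write this up: the absolute continuity of $t\mapsto\beta_i(t)$, $\gamma_{ij}(t)$ (via Cauchy estimates and the local Lipschitz continuity in $t$, as the paper notes) is what justifies integrating the a.e.\ differential equations; and your sharpness examples are exactly the paper's, namely the $g$-starlike maps with $[Df]^{-1}f(z)=g(z_i)z$, resp.\ $g(z_j)z$.
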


\begin{proof}
Since $f\in S_g^0(\B_X)$, there is a Herglotz vector field $h:\B_X\times [0,\infty)\to X$
such that $h(\cdot,t)\in {\mathcal M}_g(\B_X)$ for a.e. $t\geq 0$, and
$f(z)=\lim_{t\to\infty}e^tv(z,t)$ locally uniformly on $\B_X$, where
$v(z,t)=e^{-t}z+\cdots$ is the unique locally Lipschitz
continuous solution on $[0,\infty)$ of the initial value problem
\begin{equation}
\label{loewner-ode}
\frac{\partial v}{\partial t}=-h(v,t)\quad \mbox{ a.e. }\quad
t\geq 0,\quad v(z,0)=z,
\end{equation}
for all $z\in \B_X$.

Let $v_t=v(\cdot,t)$ and let $Q(t)(w^2)=\frac{1}{2}D^2v_t(0)(w^2)$ for $t\geq 0$
and $w\in X$. Since $v(z,\cdot)$
is locally Lipschitz continuous on $[0,\infty)$ locally uniformly with respect
to $z\in\B_X$, it follows in view of the Cauchy integral formulas for vector valued
holomorphic mappings that $Q(t)(w^2)$ is also locally Lipschitz continuous on
$[0,\infty)$, and thus $Q(\cdot)(w^2)$ is differentiable a.e. on $[0,\infty)$
(see e.g. \cite{GHK02} and \cite[Chapter 8]{GK03}).
There exists a neighbourhood $V$ of the origin such that every holomorphic mapping on $V$
has a power series expansion on $V$.
In view of (\ref{loewner-ode}), we
obtain after elementary computations that
$$-e^{-t}z+\frac{d}{dt}Q(t)(z^2)+\cdots=-e^{-t}z-Q(t)(z^2)-\frac{e^{-2t}}{2}D^2h(0,t)(z^2)-\cdots,$$
for a.e. $t\geq 0$ and for all $z\in V$. Identifying the coefficients in the above power
series expansions, we deduce that
$$\frac{d}{dt}Q(t)(z^2)=-Q(t)(z^2)-\frac{e^{-2t}}{2}D^2h(0,t)(z^2),\quad a.e.\quad t\geq 0,\quad
\forall\, z\in V,$$
and thus
$$\frac{d}{dt}\big(e^tQ(t)(w^2)\big)=-\frac{e^{-t}}{2}D^2h(0,t)(w^2),\quad a.e.\quad t\geq 0,\quad
\forall\, w\in X,\, \|w\|=1.$$
Integrating both sides of the above equality from $0$ to $t>0$, and using the fact that $v(z,0)=z$,
and thus $Q(0)(w^2)=0$, for all $w\in X$, we deduce that
$$e^tQ(t)(w^2)=-\int_0^te^{-\tau}\frac{1}{2}D^2h(0,\tau)(w^2)d\tau,\quad w\in X.$$
Next, fix $w\in X$ with $\|w\|=1$, and let $l_w\in T(w)$. Taking into account the relation
(\ref{coeff-mg3a}), we obtain that
\begin{eqnarray*}
\left|e^tl_w\big(Q(t)(w^2)\big)\right|
&\leq &\int_0^te^{-\tau}\left|\frac{1}{2}l_w\big(D^2h(0,\tau)(w^2)\big)\right|d\tau\\
&\leq &|g'(0)|(1-e^{-t}).
\end{eqnarray*}
Since $\lim_{t\to\infty}e^tv(\cdot,t)=f$ locally uniformly on $\B_X$, we obtain
in view of the above relation that
(see also \cite[Theorem 2.14]{GHK02}; cf. \cite[Theorem 3]{Por1}, in the case $\B_X=\U^n$)
\begin{equation}
\label{second-coeff-estimate}
\left|\frac{1}{2}l_w\big(D^2f(0)(w^2)\big)\right|\leq |g'(0)|.
\end{equation}
Since $w\in X$, $\|w\|=1$, and $l_w\in T(w)$ are arbitrary, the relations
(\ref{coeff-gparam1}) and (\ref{coeff-gparam1b})
easily follow in view of Lemma \ref{linear1} and the above inequality
(see the proof of Proposition \ref{p.coeff-mg}).

Next, we prove the sharpness of (\ref{coeff-gparam1}).
We may assume that $i=1$.
To this end, let $h:\B_X\to X$ be given by
$h(z)=g(z_1)z$. Then $h\in {\mathcal M}_g(\B_X)$. Since $\Re g(\zeta)>0$, $\zeta\in\U$, it
follows that ${\mathcal M}_g(\B_X)\subseteq {\mathcal M}(\B_X)$ and
$S_g^0(\B_X)\subseteq S^0(\B_X)$.
On the other hand, there exists $f\in S_g^*(\B_X)$ such $[Df(z)]^{-1}f(z)=h(z)$, $z\in\B_X$
(see \cite{GHK14}). It is not difficult to deduce that
$\frac{1}{2}D^2f(0)(u^2)=-\frac{1}{2}D^2h(0)(u^2)$, $u=(u_1,u_2,0'')\in X$, and thus
$$\frac{1}{2}\frac{\partial^2f_1}{\partial z_1^2}(0)=-g'(0),$$
which yields that (\ref{coeff-gparam1}) is sharp, as desired.
Finally,
we prove the sharpness of (\ref{coeff-gparam1b}).
We may assume that $i=1$ and $j=2$.
To this end, let $h:\B_X\to X$ be given by
$h(z)=g(z_2)z$. Then $h\in {\mathcal M}_g(\B_X)$ and there exists $f\in S_g^*(\B_X)$ such $[Df(z)]^{-1}f(z)=h(z)$, $z\in\B_X$.
By using arguments similar to the above, we have
\[
\frac{\partial^2f_1}{\partial z_1\partial z_2}(0)=-g'(0),
\]
which yields that (\ref{coeff-gparam1b}) is sharp, as desired.
This completes the proof.
\end{proof}

\begin{remark}
\label{second-remark}
In view of the above proofs,
(\ref{coeff-mg3a}) and (\ref{second-coeff-estimate}) 
hold for the unit ball in $\C^n$ with respect to an 
arbitrary norm on $\C^n$ and univalent functions $g$ with $g(0)=1$ and 
$\Re g(\zeta)>0$, for all $\zeta \in \U$.
\end{remark}

\subsection{An unbounded support point for $S_g^0(B)$}
In the case $g:\U\to\C$ is a univalent holomorphic function such that $g(0)=1$, 
$g(\overline{\zeta})=\overline{g(\zeta)}$, $\Re g(\zeta)>0$, $\zeta\in\U$, 
and $g(\rho)=O(1-\rho)$ as $\rho\to 1-0$,
we obtain an unbounded support point for $S_g^0(B)$,
where $B$ is the unit ball of $\C^n$ with respect to an arbitrary norm on $\C^n$
(cf. \cite[Corollary 4.4]{S2},
in the case of $S^0(\U^n)$).

\begin{theorem}
\label{t.unbounded-sup}
Let  $B$ be the unit ball of $\C^n$ with respect to an arbitrary norm on $\C^n$.
Let $g:\U\to\C$ be a univalent holomorphic function such that $g(0)=1$,
$g(\overline{\zeta})=\overline{g(\zeta)}$, 
and 
$\Re g(\zeta)>0$, $\zeta\in\U$. 
Assume that
$g(\rho)=O(1-\rho)$ as $\rho\to 1-0$.
Then there exists an unbounded support point for $S_g^0(B)$.
\end{theorem}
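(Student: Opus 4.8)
The plan is to produce the unbounded support point explicitly, as a ``$g$-Koebe type'' mapping carried along a single slice direction; this is the higher-dimensional analogue of the one-variable extremal function for $g$-starlikeness, and it reduces to the polydisc construction of \cite{S2} when $B=\U^n$.

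First I would fix $w\in X$ with $\|w\|=1$ and $l_w\in T(w)$, and introduce the holomorphic function $\psi:\U\to\C$,
\[
\psi(\zeta)=\exp\Big(\int_0^\zeta \frac{1-g(s)}{sg(s)}\,ds\Big),\qquad \zeta\in\U,
\]
which is well defined and zero-free on $\U$ because $\Re g>0$ forces $g$ to be zero-free and the integrand has a removable singularity at $s=0$ with value $-g'(0)$; thus $\psi(0)=1$, $\psi'(0)=-g'(0)$ and $\psi'(\zeta)/\psi(\zeta)=\frac{1-g(\zeta)}{\zeta g(\zeta)}$. Then I would set $F(z)=\psi(l_w(z))\,z$, $z\in B$ (well defined since $|l_w(z)|\le\|z\|<1$), which is normalized, and compute the mapping $h(z):=[DF(z)]^{-1}F(z)$ governing its $g$-starlikeness. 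Writing $z\otimes l_w$ for the rank-one operator $v\mapsto l_w(v)z$, one has $DF(z)=\psi(l_w(z))(I_n+c(z)\,z\otimes l_w)$ with $c(z)=\psi'(l_w(z))/\psi(l_w(z))$; since $1+c(z)l_w(z)=1+\frac{1-g(l_w(z))}{g(l_w(z))}=\frac{1}{g(l_w(z))}\neq 0$, a rank-one (Sherman--Morrison) inversion shows $F\in{\mathcal L}S(B)$ and $h(z)=\frac{1}{1+c(z)l_w(z)}\,z=g(l_w(z))\,z$. Because $\frac{1}{\|u\|}l_u(h(u))=g(l_w(u))\in g(\U)$ for every $u\in B\setminus\{0\}$ and $l_u\in T(u)$, we get $h\in{\mathcal M}_g(B)$, so $F$ is $g$-starlike and hence $F\in S_g^*(B)\subseteq S_g^0(B)$.

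Next I would prove $F$ is unbounded and exhibit the separating functional. Restricting to the real ray through $w$ and using that $g(\overline\zeta)=\overline{g(\zeta)}$ makes $g(s)$ real, hence $g(s)>0$, on $(0,1)$, one has $\|F(\rho w)\|=\rho\,\psi(\rho)=\rho\exp(\int_0^\rho\frac{1-g(s)}{sg(s)}\,ds)$ for $\rho\in(0,1)$; the hypothesis $g(\rho)=O(1-\rho)$ as $\rho\to 1-0$ furnishes $C>0$ and $\rho_0\in(0,1)$ with $0<g(s)\le C(1-s)\le\frac12$ for $s\in[\rho_0,1)$, the integral over $[0,\rho_0]$ is finite, while on $[\rho_0,1)$ we have $\frac{1-g(s)}{sg(s)}\ge\frac{1}{2g(s)}\ge\frac{1}{2C(1-s)}$, so $\int_{\rho_0}^1\frac{1-g(s)}{sg(s)}\,ds=+\infty$; hence $\psi(\rho)\to+\infty$ and $F$ is unbounded. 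For the functional I would put $\varepsilon=-g'(0)/|g'(0)|\in\{-1,1\}$ (legitimate since $g$ univalent and real on $(-1,1)$ forces $g'(0)\in\R\setminus\{0\}$) and let $\Lambda(f)=\varepsilon\cdot\tfrac12 l_w(D^2f(0)(w^2))$, a continuous linear functional on $H(B)$. By Remark \ref{second-remark} and $(\ref{second-coeff-estimate})$, $\Re\Lambda(f)\le|\Lambda(f)|\le|g'(0)|$ for every $f\in S_g^0(B)$; expanding $F$ near $0$ gives $\tfrac12 D^2F(0)(w^2)=\psi'(0)l_w(w)\,w=-g'(0)w$, so $\Lambda(F)=\varepsilon(-g'(0))=|g'(0)|$, i.e. $F$ maximizes $\Re\Lambda$ over $S_g^0(B)$, while $\Re\Lambda({\rm id}_B)=0\neq|g'(0)|$ shows $\Re\Lambda$ is not constant on $S_g^0(B)$. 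This gives that $F$ is an unbounded support point for $S_g^0(B)$.

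The only substantive step will be the unboundedness argument: one must turn the one-sided growth bound $g(\rho)=O(1-\rho)$ into a divergent logarithmic integral, and for this the positivity of $g$ on $(0,1)$ — which is exactly where the hypotheses $g(\overline\zeta)=\overline{g(\zeta)}$ and $\Re g>0$ enter — together with a careful splitting of the integral near $\rho=1$ is essential. Everything else (the rank-one inversion, the membership $h\in{\mathcal M}_g(B)$, the computation of $\tfrac12 D^2F(0)(w^2)$, and the fact that $(\ref{second-coeff-estimate})$ is available on an arbitrary norm ball via Remark \ref{second-remark}) is routine bookkeeping.
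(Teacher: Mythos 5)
Your proposal is correct and follows essentially the same route as the paper: your $F(z)=\psi(l_w(z))z$ is exactly the paper's extremal mapping $f^{e_1}(z)=\frac{b(l_{e_1}(z))}{l_{e_1}(z)}z$ with $b(\zeta)=\zeta\psi(\zeta)$ solving $\zeta b'/b=1/g$, the separating functional is the same second-order coefficient functional bounded via Remark \ref{second-remark}, and the unboundedness comes from the same divergent logarithmic integral driven by $g(\rho)=O(1-\rho)$. The only cosmetic difference is that you verify $[DF]^{-1}F=g(l_w(\cdot))\,\mathrm{id}$ directly by a rank-one inversion where the paper cites \cite[Lemma 3.1]{HH08}.
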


\begin{proof}
First, note that $g(\zeta)$ is real valued for real $\zeta \in\U$ and
$g'(0)<0$ from the assumption.

Let $b\in S_g^*(\U)$ be defined by $b(0)=b'(0)-1=0$ and
\[
\frac{\zeta b'(\zeta)}{b(\zeta)}=\frac{1}{g(\zeta)},
\quad \zeta \in\U.
\]
Let $f^{e_1}:B\to X$ be given by
\begin{equation}
\label{star-g-map}
f^{e_1}(z)=\frac{b(l_{e_1}(z))}{l_{e_1}(z)}z,
\quad z\in B,
\end{equation}
where $e_1$ is a unit vector in $\C^n$.
Let $f_1(\zeta)=l_{e_1}(f(\zeta e_1))$ for $f\in H(B)$.
Then $f^{e_1}\in S^*_g(B)$
and
$\frac{1}{2}\frac{\partial ^2 f_1^{e_1}}{\partial \zeta^2}(0)=
-g'(0)=|g'(0)|$
by \cite[Lemma 3.1]{HH08}.
Let
\[
\Lambda(f)=\frac{1}{2}\frac{\partial ^2 f_1}{\partial \zeta^2}(0),
\quad
f\in H(B).
\]
Since $\Re \Lambda(f)\leq \Re \Lambda(f^{e_1})$
for $S_g^0(B)$ by Remark \ref{second-remark}
(see also \cite[Theorem 2.14]{GHK02}),
and since $\Re \Lambda({\rm id}_{B})=0<\Re \Lambda(f^{e_1})$,
it follows that $f^{e_1}$ is a support point for $S_g^0(B)$.

Since there exists a constant $C>0$ such that
\[
\frac{b'(\rho)}{b(\rho)}=\frac{1}{\rho g(\rho)}\geq \frac{C}{1-\rho},
\quad \frac{1}{2}<\rho<1,
\]
we have
\[
\log \frac{b(\rho)}{b(\frac{1}{2})}
\geq
-C\log 2(1-\rho)=\log\big\{2(1-\rho)\big\}^{-C},
\quad
\frac{1}{2}<\rho<1.
\]
Therefore, we have
\[
b(\rho)\geq b\left(\frac{1}{2}\right)\big\{2(1-\rho)\big\}^{-C},
\quad
\frac{1}{2}<\rho<1.
\]
This implies that
$\| f^{e_1}( \rho e_1)\|=\| b(\rho )e_1\|=b(\rho) \to \infty$
as $\rho\to 1-0$.
Thus, $f^{e_1}$ is an unbounded support point for $S_g^0(B)$.
This completes the proof.
\end{proof}

\begin{remark}
Let $\B_X$ be as in Definition \ref{d.shearing}. 

(i) In view of  Corollary \ref{cor.supp-bounded} and Theorem \ref{t.unbounded-sup},
we deduce that the family $S_\alpha^0(\B_X)$ contains bounded and also unbounded support
points, for $\alpha\in [0,1)$. In particular, the family $S^0(\B_X)$ contains bounded
and also unbounded support points.

(ii) Let $g:\U\to\C$ satisfy the conditions of Assumption $\ref{assump}$, and 
let $d_1(g)={\rm dist}(1,\partial g(\U))$.
We remark that the support points of $S_g^0(\B_X)$
given by (\ref{supp3}) are restriction to $\B_X$ of
automorphisms of $X=\C^n$. However, if
$g(\zeta)=\frac{1-A\zeta}{1+B\zeta}$, $\zeta\in\U$,
with $-1\leq A<1$, $A+B>0$, $-1\leq B \leq 1$,
then the mapping $f^{e_1}$ in the proof of Theorem \ref{t.unbounded-sup}, given by
(\ref{star-g-map}), is a bounded support point of $S_g^0(\B_X)$,
which cannot be extended to an automorphism of $\C^n$.
\end{remark}

Finally, in view of Theorem \ref{t.unbounded-sup}, we point out the following questions
of interest:

\begin{question}
\label{q.unbounded-ex1}
Let $B$ be the unit ball of $\C^n$ with respect to an 
arbitrary norm on $\C^n$.
Let $g:\U\to\C$ be a univalent holomorphic function such that $g(0)=1$, 
$g(\overline{\zeta})=\overline{g(\zeta)}$, 
and $\Re g(\zeta)>0$, $\zeta\in\U$. Assume that
$g(\rho)=O(1-\rho)$ as $\rho\to 1-0$. Does there exist
an unbounded extreme point of $S_g^0(B)$?
\end{question}

\begin{question}
\label{q.unbounded-ex2}
Let $B$ be the unit ball of $\C^n$ with respect to an 
arbitrary norm on $\C^n$.
Let $g:\U\to\C$ be as in Question $\ref{q.unbounded-ex1}$.
Also, let $b\in {\rm ex}\,S_g^*(\U)$, and let $f^{e_1}:B\to\C^n$
be given by $(\ref{star-g-map})$. Then is it true that $f^{e_1}\in
{\rm ex}\, S_g^*(B)$?
\end{question}

\section{Coefficient bounds and support points on the Euclidean unit ball}
\setcounter{equation}{0}
In the case of the Euclidean unit ball $\B^n$ of $\C^n$ with $n\geq 2$,
we point out the following results, which are generalizations of recent
results from \cite[Section 4]{GHKK16} (compare with \cite{Br} and
\cite{BR}, in the case $g(\zeta)=\frac{1-\zeta}{1+\zeta}$, $\zeta\in\U$).
$\B^n$ is a bounded symmetric domain of rank $r=1$.
We take $e_1=(1,0,\dots, 0), e_2=(0,1,\dots, 0), e_n=(0,0, \dots, 1)\in \C^n$ 
as the basis explained in the introduction.
Then, $z=(z_1,\dots, z_n)\in \C^n$ is the usual expression of $z\in \C^n$.
We omit the proofs, because it suffices to use arguments similar
to those in the previous sections.

\begin{proposition}
\label{p.shear1-E}
Let $g:\U\to \C$ satisfy the conditions of Assumption
$\ref{assump}$ and let $d_1(g)={\rm dist}(1,\partial g(\U))$.
Let $n\geq 2$.

$(i)$ If $h=(h_1,h_2,\dots, h_n)\in {\mathcal M}_g(\B^n)$, then
$$\left|
\frac{1}{2}\frac{\partial^2 h_i}{\partial z_j^2}(0)
\right| \leq  \frac{3\sqrt{3}}{2}d_1(g),
\quad
\mbox{for }
1\leq i\neq j\leq n.$$
In addition, $h_{i,j}^{[c]}\in {\mathcal M}_g(\B^n)$
for $1\leq i\neq j\leq n$.

$(ii)$
Let $\widetilde{h}_{i,j}[g]:\B^n\to \C^n$ be given by
$$\widetilde{h}_{i,j}[g](z)=z\pm  \frac{3\sqrt{3}}{2}d_1(g)z_j^2e_i,\quad z\in\B^n,
\quad 1\leq i\neq j\leq n.$$
Then $\widetilde{h}_{i,j}[g]\in {\mathcal M}_g(\B^n)$ are bounded support points of 
${\mathcal M}_g(\B^n)$ for $1\leq i\neq j\leq n$.
\end{proposition}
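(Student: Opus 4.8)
The plan is to mimic the proofs of Proposition~\ref{p.shear1} and Proposition~\ref{p.estim}, the only genuinely new feature being that, since $\B^n$ is the \emph{Euclidean} ball, the coordinates $z_1,z_2$ cannot simultaneously have modulus $\|z\|$; one must instead optimize over the sphere $|z_1|^2+|z_2|^2=\|z\|^2$, and this optimization is exactly what produces the factor $\frac{3\sqrt3}{2}=\big(\max\{t(1-t^2):0\le t\le1\}\big)^{-1}$. Throughout I would fix $i=1$, $j=2$ (the other cases being identical after relabeling) and work on the slice $\B^n\cap\{(z_1,z_2,0,\dots,0)\}$, since neither the functional $\frac12\partial^2 h_1/\partial z_2^2(0)$ nor the shearing $h_{1,2}^{[c]}$ involves the remaining coordinates.

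For the estimate in part~(i), expand $h=(h_1,\dots,h_n)\in\mathcal M_g(\B^n)$ as $h(z)=z+\sum_{k\ge2}P_k(z)$ with $P_k$ homogeneous of degree $k$, and for $z=\rho\zeta$ with $\zeta=r_1e^{i\phi_1}e_1+r_2e^{i\phi_2}e_2$, $r_1^2+r_2^2=1$, $\rho\in(0,1)$, use the scaling invariance $l_z=l_\zeta$ of the Euclidean peak functional to write
\[
\frac{1}{\|z\|}l_z(h(z))=1+\sum_{k\ge2}\rho^{k-1}l_\zeta\big(P_k(\zeta)\big)\in g(\U).
\]
I would then average this over the one-parameter family $\phi_1=2s+c$, $\phi_2=s$, $s\in[0,2\pi)$: every monomial of $l_\zeta(P_k(\zeta))$ acquires a factor $e^{i[2(a-a')+(b-b')]s}$ (where $a+b=k$ and $(a',b')\in\{(1,0),(0,1)\}$), which integrates to zero except for the single term $q_{0,2}^1\,\bar\zeta_1\zeta_2^2$ coming from $P_2$. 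By convexity of $g(\U)$ the average lies in $\overline{g(\U)}$, i.e. $1+\rho\,q_{0,2}^1\,r_1r_2^2\,e^{-ic}\in\overline{g(\U)}$; letting $c$ range over $[0,2\pi)$ places the whole circle of radius $\rho|q_{0,2}^1|r_1r_2^2$ about $1$ inside $\overline{g(\U)}$, whence $\rho|q_{0,2}^1|r_1r_2^2\le d_1(g)$ by convexity. Optimizing $r_1r_2^2$ under $r_1^2+r_2^2=1$ gives $\max r_1r_2^2=\frac{2}{3\sqrt3}$ (attained at $r_1^2=\frac13$), and $\rho\to1$ yields $\big|\frac12\partial^2h_1/\partial z_2^2(0)\big|\le\frac{3\sqrt3}{2}d_1(g)$.

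The inclusion $h_{1,2}^{[c]}\in\mathcal M_g(\B^n)$ and all of part~(ii) then follow from a direct estimate. For $z\in\B^n\setminus\{0\}$ one has $l_z(e_1)=\bar z_1/\|z\|$, hence
\[
\Big|\frac{1}{\|z\|}l_z\big(h_{1,2}^{[c]}(z)\big)-1\Big|=|q_{0,2}^1|\,\frac{|z_1|\,|z_2|^2}{\|z\|^2}\le|q_{0,2}^1|\,\frac{2}{3\sqrt3}\,\|z\|<d_1(g),
\]
because $|z_1|^2+|z_2|^2\le\|z\|^2$ forces $|z_1|\,|z_2|^2/\|z\|^2\le\frac{2}{3\sqrt3}\|z\|$, together with the bound of part~(i); since $\{w:|w-1|<d_1(g)\}\subseteq g(\U)$ ($g(\U)$ is open, convex, contains $g(0)=1$, and $d_1(g)={\rm dist}(1,\partial g(\U))$), we get $h_{1,2}^{[c]}\in\mathcal M_g(\B^n)$. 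The identical computation with $|q_{0,2}^1|$ replaced by $\frac{3\sqrt3}{2}d_1(g)$ shows $\widetilde h_{i,j}[g]\in\mathcal M_g(\B^n)$ (the inequality stays strict since $\|z\|<1$). Finally the $\C$-linear functional $L(h)=\frac12\partial^2 h_i/\partial z_j^2(0)$ satisfies $\Re L\le\frac{3\sqrt3}{2}d_1(g)$ on $\mathcal M_g(\B^n)$ by part~(i), with equality at $\widetilde h_{i,j}[g]$ for the $+$ sign (use $-L$ for the $-$ sign), while $\Re L({\rm id}_{\B^n})=0$; thus $\Re L$ is nonconstant on $\mathcal M_g(\B^n)$ and the polynomial mapping $\widetilde h_{i,j}[g]$ is a bounded support point.

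The step I expect to be the main obstacle is the bookkeeping in the averaging: one must verify that the curve $(\phi_1,\phi_2)=(2s+c,s)$ annihilates every monomial of $\sum_{k\ge2}\rho^{k-1}l_\zeta(P_k(\zeta))$ except $\bar\zeta_1\zeta_2^2$, i.e. that $2(a-a')+(b-b')=0$ with $a+b=k$, $a,b\ge0$, $(a',b')\in\{(1,0),(0,1)\}$ forces $(k,a,b,a',b')=(2,0,2,1,0)$ — so in particular no degree $k\ge3$ term survives. This is immediate once written out, and the term-by-term integration is justified routinely because $\rho<1$ and $h$ is bounded on concentric balls of radius in $(\rho,1)$. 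Everything else reduces to the elementary extremal problem $\max_{r_1^2+r_2^2=1}r_1r_2^2=\frac{2}{3\sqrt3}$.
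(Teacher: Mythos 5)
Your proof is correct and is essentially the argument the paper intends: the paper omits the proof of this proposition, referring to the arguments of Propositions \ref{p.shear1} and \ref{p.estim} and to the $\B^2$ case in \cite{GHKK16}, and your adaptation (averaging over the phase curve $\phi_1=2s+c$, $\phi_2=s$ to isolate the coefficient $q_{0,2}^1$, then optimizing $r_1r_2^2$ on $r_1^2+r_2^2=1$ to produce the factor $\tfrac{3\sqrt3}{2}$) is exactly that standard adaptation to the Euclidean ball. The membership and support-point parts also match the template of Proposition \ref{p.estim} and Theorem \ref{r.supp-bounded}.
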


In particular, from Proposition \ref{p.shear1-E}, we obtain the following
consequences:

\begin{corollary}
\label{c.alpha-star-E}
Let $n\geq 2$, $\alpha\in [0,1)$, and let $h\in {\mathcal M}_g(\B^n)$,
where $g(\zeta)=\frac{1-\zeta}{1+(1-2\alpha)\zeta}$, $\zeta\in\U$.
Then $h_{i,j}^{[c]}\in {\mathcal M}_g(\B^n)$
for $1\leq i\neq j\leq n$, and the following sharp estimates hold:
\[\left|\frac{1}{2}\frac{\partial^2 h_i}{\partial z_j^2}(0)
\right|\leq
\frac{3\sqrt{3}}{2}d_1(\alpha),\quad\mbox{ for } 1\leq i\neq j\leq n,\]
where $d_1(\alpha)$ is given by $(\ref{a0-alpha})$.
\end{corollary}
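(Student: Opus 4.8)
The plan is to obtain Corollary~\ref{c.alpha-star-E} by specializing Proposition~\ref{p.shear1-E} to the function $g_\alpha(\zeta)=\frac{1-\zeta}{1+(1-2\alpha)\zeta}$, $\zeta\in\U$, for a fixed $\alpha\in[0,1)$ and $n\ge 2$. First I would check that $g_\alpha$ satisfies the conditions of Assumption~\ref{assump}. The map $g_\alpha$ is a Möbius transformation, hence univalent on $\U$, with $g_\alpha(0)=1$; since its pole lies off $\partial\U$ for $\alpha\in(0,1)$ (and there is no pole for $\alpha=\tfrac12$), the image $g_\alpha(\U)$ is a round disc, while for $\alpha=0$ it is the right half-plane $\{\Re w>0\}$; in either case $g_\alpha(\U)$ is convex and $\Re g_\alpha(\zeta)>0$ on $\U$, as in Remark~\ref{r.examples}(i). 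By the definition of ${\mathcal M}_\alpha(\B^n)$ recorded there, ${\mathcal M}_{g_\alpha}(\B^n)={\mathcal M}_\alpha(\B^n)$.

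Next I would identify $d_1(g_\alpha)={\rm dist}(1,\partial g_\alpha(\U))$ with the constant $d_1(\alpha)$ of (\ref{a0-alpha}). This is a one-variable computation: for $\alpha\in(0,1)$ the curve $g_\alpha(\partial\U)$ is the circle through $g_\alpha(1)=0$ and $g_\alpha(-1)=\tfrac1\alpha$ with center and radius both equal to $\tfrac1{2\alpha}$ (one verifies $|g_\alpha(i)-\tfrac1{2\alpha}|=\tfrac1{2\alpha}$), and the point $1=g_\alpha(0)$ lies in the interior of the bounded disc, so ${\rm dist}(1,\partial g_\alpha(\U))=\tfrac1{2\alpha}-\big|1-\tfrac1{2\alpha}\big|$, which equals $1$ for $\alpha\in(0,\tfrac12]$ and $\tfrac{1-\alpha}{\alpha}$ for $\alpha\in(\tfrac12,1)$; for $\alpha=0$ the boundary is the imaginary axis and the distance is plainly $1$. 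Thus $d_1(g_\alpha)=d_1(\alpha)$ as in (\ref{a0-alpha}) (this is also contained in \cite{GHKK16}). Plugging $g=g_\alpha$ into Proposition~\ref{p.shear1-E}(i) then yields at once that $h_{i,j}^{[c]}\in{\mathcal M}_{g_\alpha}(\B^n)={\mathcal M}_\alpha(\B^n)$ for $1\le i\ne j\le n$ and that $\big|\tfrac12\frac{\partial^2 h_i}{\partial z_j^2}(0)\big|\le\tfrac{3\sqrt3}{2}d_1(g_\alpha)=\tfrac{3\sqrt3}{2}d_1(\alpha)$.

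For sharpness I would invoke Proposition~\ref{p.shear1-E}(ii) with $g=g_\alpha$: the mapping $\widetilde h_{i,j}[g_\alpha](z)=z+\tfrac{3\sqrt3}{2}d_1(\alpha)z_j^2e_i$ belongs to ${\mathcal M}_{g_\alpha}(\B^n)={\mathcal M}_\alpha(\B^n)$, and the $i$-th component of its second differential at the origin gives $\tfrac12\frac{\partial^2}{\partial z_j^2}$ equal to $\tfrac{3\sqrt3}{2}d_1(\alpha)$, so the estimate is attained for every pair $i\ne j$.

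I do not expect a genuine obstacle: the corollary is a pure specialization, and the only computational content is the evaluation of ${\rm dist}(1,\partial g_\alpha(\U))$ together with the (elementary) observation that $g_\alpha(\U)$ is convex — both already used in \cite{GHKK16}. All of the higher-dimensional substance, namely the shearing construction on $\B^n$, the extremal constant $\tfrac{3\sqrt3}{2}$, and membership of the sheared mappings in ${\mathcal M}_g(\B^n)$, is supplied by Proposition~\ref{p.shear1-E}, so the argument reduces to bookkeeping once that proposition is granted.
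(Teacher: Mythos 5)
Your proposal is correct and follows exactly the route the paper intends: the paper states this corollary as an immediate specialization of Proposition \ref{p.shear1-E} to $g(\zeta)=\frac{1-\zeta}{1+(1-2\alpha)\zeta}$ and omits the proof, and your verification that $g_\alpha$ satisfies Assumption \ref{assump} together with the computation ${\rm dist}(1,\partial g_\alpha(\U))=d_1(\alpha)$ (already recorded in the paper via \eqref{a0-alpha} and \cite{GHKK16}) supplies precisely the omitted bookkeeping. The sharpness argument via $\widetilde h_{i,j}[g_\alpha]$ from Proposition \ref{p.shear1-E}(ii) is likewise the intended one.
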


\begin{corollary}
\label{c.carath-strongly-star-E}
Let $n\geq 2$, $\alpha\in (0,1]$, and let $h\in {\mathcal M}_g(\B^n)$,
where $g(\zeta)=\left(\frac{1-\zeta}{1+\zeta}\right)^\alpha$, $\zeta\in\U$,
and the branch of the power function is chosen such
that $g(0)=1$.
Then $h_{i,j}^{[c]}\in {\mathcal M}_g(\B^n)$
for $1\leq i\neq j\leq n$, and the following sharp estimates hold:
\[\left|\frac{1}{2}\frac{\partial^2 h_i}{\partial z_j^2}(0)
\right|\leq
\frac{3\sqrt{3}}{2}\sin\left(\frac{\alpha\pi}{2}\right),\quad\mbox{ for }
1\leq i\neq j\leq n.\]
\end{corollary}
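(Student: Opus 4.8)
The plan is to read this off directly from Proposition~\ref{p.shear1-E}: since this corollary is just the specialization of that proposition to the generating function $g(\zeta)=\left(\frac{1-\zeta}{1+\zeta}\right)^{\alpha}$, the only thing left to do is to evaluate the constant $d_1(g)={\rm dist}(1,\partial g(\U))$ for this particular $g$. (One could instead reprove it from scratch by repeating the shearing argument behind Proposition~\ref{p.shear1-E}, but citing the proposition is cleaner.)

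First I would identify the image $g(\U)$. The Cayley transform $\zeta\mapsto w=\frac{1-\zeta}{1+\zeta}$ maps $\U$ conformally onto the right half-plane $\{\,\Re w>0\,\}=\{\,w:|\arg w|<\pi/2\,\}$ and sends $0$ to $1$; composing with the branch of $w\mapsto w^{\alpha}$ normalized by $1\mapsto 1$ carries this half-plane onto the open sector
\[
S_{\alpha}=\big\{\,z\in\C\setminus\{0\}:|\arg z|<\tfrac{\alpha\pi}{2}\,\big\}.
\]
Hence $g(\U)=S_{\alpha}$ and $g(0)=1$. Since the angular width $\alpha\pi$ of $S_{\alpha}$ does not exceed $\pi$ for $\alpha\in(0,1]$, the cone $S_{\alpha}$ is convex, and clearly $\Re g(\zeta)>0$ on $\U$; thus $g$ satisfies the conditions of Assumption~\ref{assump} and Proposition~\ref{p.shear1-E} applies.

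Next I would compute ${\rm dist}(1,\partial S_{\alpha})$. The boundary of $S_{\alpha}$ is the union of the two rays $\ell_{\pm}=\{\,re^{\pm i\alpha\pi/2}:r\geq 0\,\}$ issuing from the origin. As $1$ lies on the positive real axis and $0<\alpha\pi/2\leq\pi/2$, the orthogonal projection of $1$ onto the line carrying $\ell_{+}$ equals $\cos(\tfrac{\alpha\pi}{2})\,e^{i\alpha\pi/2}$, whose radial coefficient is nonnegative, so this foot lies on $\ell_{+}$ itself; therefore ${\rm dist}(1,\ell_{+})=\big|\,1-\cos(\tfrac{\alpha\pi}{2})e^{i\alpha\pi/2}\,\big|=\sin(\tfrac{\alpha\pi}{2})$, and ${\rm dist}(1,\ell_{-})=\sin(\tfrac{\alpha\pi}{2})$ by conjugation symmetry. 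Since $\sin(\tfrac{\alpha\pi}{2})\leq 1={\rm dist}(1,0)$, I conclude $d_1(g)={\rm dist}(1,\partial g(\U))=\sin(\tfrac{\alpha\pi}{2})$, which is the value already recorded in Corollary~\ref{c.carath-strongly-star}.

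Substituting $d_1(g)=\sin(\tfrac{\alpha\pi}{2})$ into Proposition~\ref{p.shear1-E}(i) gives both $h_{i,j}^{[c]}\in{\mathcal M}_g(\B^n)$ and the bound $\big|\tfrac{1}{2}\tfrac{\partial^2 h_i}{\partial z_j^2}(0)\big|\leq\tfrac{3\sqrt{3}}{2}\sin(\tfrac{\alpha\pi}{2})$ for $1\leq i\neq j\leq n$; sharpness is immediate from Proposition~\ref{p.shear1-E}(ii), since the mapping $\widetilde{h}_{i,j}[g](z)=z\pm\tfrac{3\sqrt{3}}{2}\sin(\tfrac{\alpha\pi}{2})z_j^2 e_i$ belongs to ${\mathcal M}_g(\B^n)$ and attains equality. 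There is no genuine obstacle in this argument; the only place that needs a moment's care is checking that the foot of the perpendicular from $1$ to each bounding ray actually lands on that ray and not on its backward prolongation, which is exactly the inequality $\cos(\alpha\pi/2)\geq 0$, valid throughout $\alpha\in(0,1]$ (for $\alpha=1$ the sector is the half-plane and the formula still reads $\sin(\pi/2)=1$).
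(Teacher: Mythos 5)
Your proposal is correct and follows exactly the paper's intended route: the corollary is obtained by specializing Proposition~\ref{p.shear1-E} to $g(\zeta)=\left(\frac{1-\zeta}{1+\zeta}\right)^{\alpha}$, whose image is the convex sector of half-angle $\frac{\alpha\pi}{2}$, so that $d_1(g)={\rm dist}(1,\partial g(\U))=\sin\left(\frac{\alpha\pi}{2}\right)$ (the value the paper records before Corollary~\ref{c.carath-strongly-star}), with sharpness supplied by the extremal mapping of Proposition~\ref{p.shear1-E}(ii). Your verification that the foot of the perpendicular lands on the bounding ray is a careful touch, but there is nothing to add or correct.
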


\begin{theorem}
\label{t.estim-E},
Let $g:\U\to \C$ satisfy the conditions of Assumption
$\ref{assump}$ and let $d_1(g)={\rm dist}(1,\partial g(\U))$.
Let $n\geq 2$.

$(i)$
Let $f=(f_1,\dots, f_n)\in S_g^0(\B^n)$.
Then
\[
\left|
\frac{1}{2}\frac{\partial^2 f_i}{\partial z_j^2}(0)
\right|
\leq \frac{3\sqrt{3}}{2}d_1(g),
\quad
\mbox{for }
1\leq i\neq j\leq n.
\]
These estimates are sharp, for all $i\neq j$.

$(ii)$
If $\widetilde{f}_{i,j}[g]=\widetilde{f}_{i,j}[g](z,t):\B^n\times [0,\infty)\to \C^n$
are given by
$$\widetilde{f}_{i,j}[g](z,t)=e^t\left( z\mp  \frac{3\sqrt{3}}{2}d_1(g)z_j^2 e_i\right),
\quad z\in\B^n,\quad t\geq 0,$$
then $\widetilde{f}_{i,j}[g](z,t)$ are $g$-Loewner chains
for $1\leq i\neq j\leq n$.
In particular, $\widetilde{F}_{i,j}[g]\in S_g^0(\B^n)$ and $\widetilde{F}_{i,j}[g]$
are $g$-starlike on $\B^n$, where
\begin{equation}
\label{supp3-E}
\widetilde{F}_{i,j}[g](z)=z\mp\frac{3\sqrt{3}}{2}d_1(g)z_j^2e_i,\quad z\in\B^n,
\end{equation}
for $1\leq i\neq j\leq n$.

$(iii)$
$\widetilde{F}_{i,j}[g]\in S_g^*(\B^n)$ are bounded support points for $S_g^0(\B^n)$,
for $1\leq i\neq j\leq n$.
In particular, $\widetilde{F}_{i,j}[g]$ are also bounded support points for $S_g^*(\B^n)$,
for $1\leq i\neq j\leq n$.
\end{theorem}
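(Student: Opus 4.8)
The plan is to transcribe, on the Euclidean ball $\B^n$ (the rank-one case, where the coordinates $z=(z_1,\dots,z_n)$ are the usual ones), the three-step scheme of Section~\ref{sec:support}: a coefficient bound for ${\mathcal M}_g(\B^n)$ lifted to $S_g^0(\B^n)$ through a Loewner-chain ODE, an explicit extremal $g$-Loewner chain, and a linear functional witnessing the support property. The only numerical change is that Proposition~\ref{p.shear1-E} replaces Proposition~\ref{p.shear1}, which is exactly what turns the bound $d_1(g)$ into $\tfrac{3\sqrt3}{2}d_1(g)$. As a preliminary I would record the $\B^n$-analogue of Theorem~\ref{t.shearing-chain}: if $f(z,t)$ is a $g$-Loewner chain on $\B^n\times[0,\infty)$ with Herglotz field $h(z,t)$, then each shearing $f_{i,j}^{[c]}(z,t)$ is again a $g$-Loewner chain with associated field $h_{i,j}^{[c]}(z,t)$; this follows verbatim from the proof of Theorem~\ref{t.shearing-chain}, the sole ingredient being Proposition~\ref{p.shear1-E}$(i)$ to ensure $h_{i,j}^{[c]}(\cdot,t)\in{\mathcal M}_g(\B^n)$.

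For part $(i)$, fix $1\le i\neq j\le n$ and $f\in S_g^0(\B^n)$, and write $f=f(\cdot,0)$ for a $g$-Loewner chain $f(z,t)$ with field $h(z,t)$. On a neighbourhood of the origin I would expand $f(z,t)=e^tz+\cdots$ and $h(z,t)=z+\cdots$, let $\beta(t)$ and $q(t)$ be the coefficients of $z_j^2e_i$ in the $i$-th components of $f_{i,j}^{[c]}(\cdot,t)$ and $h_{i,j}^{[c]}(\cdot,t)$, and compare coefficients in $\partial_t f_{i,j}^{[c]}=Df_{i,j}^{[c]}\,h_{i,j}^{[c]}$ to obtain $\beta'(t)-2\beta(t)=e^t q(t)$ a.e.; hence $\tfrac{d}{dt}\bigl(e^{-2t}\beta(t)\bigr)=e^{-t}q(t)$ and, integrating from $0$,
\[
e^{-2t}\beta(t)-a=\int_0^t e^{-\tau}q(\tau)\,d\tau,\qquad a:=\tfrac12\tfrac{\partial^2 f_i}{\partial z_j^2}(0).
\]
Proposition~\ref{p.shear1-E}$(i)$ gives $|q(\tau)|\le\tfrac{3\sqrt3}{2}d_1(g)$ a.e., while normality of $\{e^{-t}f_{i,j}^{[c]}(\cdot,t)\}_{t\ge0}$ forces $e^{-2t}\beta(t)\to0$ as $t\to\infty$, exactly as in the proof of Theorem~\ref{p.extremal}; letting $t\to\infty$ then yields $|a|\le\tfrac{3\sqrt3}{2}d_1(g)$.

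For part $(ii)$, Proposition~\ref{p.shear1-E}$(ii)$ gives $\widetilde{h}_{i,j}[g]\in{\mathcal M}_g(\B^n)$, and a short computation will show that $\widetilde{f}_{i,j}[g](z,t)$ is a normal Loewner chain solving $\partial_t\widetilde{f}_{i,j}[g](z,t)=D\widetilde{f}_{i,j}[g](z,t)\,\widetilde{h}_{i,j}[g](z)$, hence (Remark~\ref{r.loewner-eq}) a $g$-Loewner chain; therefore $\widetilde{F}_{i,j}[g]=\widetilde{f}_{i,j}[g](\cdot,0)\in S_g^0(\B^n)$, and since $\widetilde{f}_{i,j}[g](z,t)=e^t\widetilde{F}_{i,j}[g](z)$ is a $g$-Loewner chain, $\widetilde{F}_{i,j}[g]\in S_g^*(\B^n)$; reading off the coefficient of $z_j^2e_i$ shows the bound of $(i)$ is attained, which is the sharpness claim. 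For part $(iii)$ I would take $L_{i,j}(f)=\tfrac12\tfrac{\partial^2 f_i}{\partial z_j^2}(0)$, a continuous linear functional on $H(\B^n)$: by $(i)$, $\Re L_{i,j}(q)\le\tfrac{3\sqrt3}{2}d_1(g)=\Re L_{i,j}(\widetilde{F}_{i,j}[g])$ for all $q\in S_g^0(\B^n)$, with $\widetilde{F}_{i,j}[g](z)=z+\tfrac{3\sqrt3}{2}d_1(g)z_j^2e_i$, whereas $\Re L_{i,j}({\rm id}_{\B^n})=0$, so $\Re L_{i,j}$ is non-constant on $S_g^0(\B^n)$ and $\widetilde{F}_{i,j}[g]$ is a (manifestly bounded, being polynomial) support point of $S_g^0(\B^n)$; as $\widetilde{F}_{i,j}[g]\in S_g^*(\B^n)\subseteq S_g^0(\B^n)$, the same functional exhibits it as a support point of $S_g^*(\B^n)$, and the mirror argument with $\widetilde{F}_{i,j}[g](z)=z-\tfrac{3\sqrt3}{2}d_1(g)z_j^2e_i$ covers the other sign.

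I do not expect a genuine obstacle at the level of Theorem~\ref{t.estim-E}: once Proposition~\ref{p.shear1-E} is in hand the argument is a faithful transcription of the $\B_X$ case, and the only step requiring care is Proposition~\ref{p.shear1-E} itself. There the constant $\tfrac{3\sqrt3}{2}$ comes from maximizing $|z_i|\,|z_j|^2/\|z\|^2$ over $\C^n$, whose maximum is $\tfrac{2}{3\sqrt3}$, attained when $|z_i|^2=\tfrac13$ and $|z_j|^2=\tfrac23$ with all other coordinates zero; it is precisely this coupling of coordinates through the Euclidean norm---absent on the polydisc and on $\B_X\cap X_1\cong\U^r$, where the coordinates are independent---that makes the rank-one estimate differ from the one in Theorem~\ref{t0.1}.
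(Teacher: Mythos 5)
Your proposal is correct and follows essentially the route the paper intends: it transcribes the proofs of Theorem \ref{t.shearing-chain}, Theorem \ref{p.extremal}, Proposition \ref{p.estim} and Theorem \ref{r.supp-bounded} to the Euclidean ball, with Proposition \ref{p.shear1-E} supplying the constant $\frac{3\sqrt{3}}{2}d_1(g)$ in place of $d_1(g)$ (the paper itself omits the proof for exactly this reason). Your closing remark correctly locates the source of the constant in the supremum $\sup |z_i|\,|z_j|^2/\|z\|^2=\frac{2}{3\sqrt{3}}$ over the Euclidean ball.
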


In particular, from Theorem \ref{t.estim-E}, we obtain the following results:

\begin{corollary}
\label{c.estim2-E}
Let $n\geq 2$, $\alpha\in [0,1)$.

$(i)$
Let $f\in S_{\alpha}^0(\B^n)$.
Then
\[\left|\frac{1}{2}\frac{\partial^2 f_i}{\partial z_j^2}(0)\right|
\leq \frac{3\sqrt{3}}{2}d_1(\alpha),
\quad \mbox{ for } 1\leq i\neq j\leq n.\]
These estimates are sharp, for all $i\neq j$.

$(ii)$
Let
$\widetilde{\Phi}_{i,j}^{\alpha}:\B^n\to\C^n$ be given by
\begin{equation}
\label{support2b}
\widetilde{\Phi}_{i,j}^{\alpha}(z)=z\pm \frac{3\sqrt{3}}{2}
d_1(\alpha)z_j^2 e_i,\quad z=(z_1,z_2,\dots, z_n)\in\B^n,
\end{equation}
for $1\leq i\neq j\leq r$,
where $d_1(\alpha)$ is given by $(\ref{a0-alpha})$.
Then $\widetilde{\Phi}_{i,j}^{\alpha}\in S_{\alpha}^*(\B^n)$, and thus
$\widetilde{\Phi}_{i,j}^{\alpha}\in S_{\alpha}^0(\B^n)$,
for $1\leq i\neq j\leq r$.

$(iii)$
$\widetilde{\Phi}_{i,j}^{\alpha}\in S_{\alpha}^*(\B^n)$ are bounded support
points for the family $S_{\alpha}^0(\B^n)$
and also bounded support points for the
family $S_{\alpha}^*(\B^n)$, for $1\leq i\neq j\leq n$.
\end{corollary}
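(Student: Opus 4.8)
The plan is to obtain Corollary~\ref{c.estim2-E} as the specialization of Theorem~\ref{t.estim-E} to the function $g(\zeta)=\frac{1-\zeta}{1+(1-2\alpha)\zeta}$, $\zeta\in\U$, which is exactly the function for which $S_g^0(\B^n)=S_\alpha^0(\B^n)$ and $S_g^*(\B^n)=S_\alpha^*(\B^n)$ (see Definition~\ref{d.g-param} and Remark~\ref{r.examples}). First I would check that this $g$ meets the hypotheses of Assumption~\ref{assump}: it is a M\"obius transformation, hence univalent on $\U$ (its pole lies outside $\overline{\U}$ for every $\alpha\in[0,1)$), it satisfies $g(0)=1$, its image $g(\U)$ is an open disc or half-plane and is therefore convex, and $\Re g(\zeta)>0$ on $\U$ since $g$ represents starlikeness of order~$\alpha$.

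The only computational step is the explicit evaluation of $d_1(g)={\rm dist}(1,\partial g(\U))$. For $\alpha=0$ we have $g(\zeta)=\frac{1-\zeta}{1+\zeta}$, which maps $\U$ onto the right half-plane, so $d_1(g)=1$. For $\alpha\in(0,1)$ the map $g$ sends $\U$ onto the disc whose diameter along the real axis joins $g(1)=0$ to $g(-1)=\frac1\alpha$, i.e. the disc centered at $\frac{1}{2\alpha}$ of radius $\frac{1}{2\alpha}$; since $1$ lies in this disc one gets
\[
d_1(g)=\frac{1}{2\alpha}-\Big|\,1-\frac{1}{2\alpha}\,\Big|=\frac{1-|2\alpha-1|}{2\alpha},
\]
which equals $1$ for $\alpha\in(0,\tfrac12]$ and $\frac{1-\alpha}{\alpha}$ for $\alpha\in(\tfrac12,1)$. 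Hence $d_1(g)=d_1(\alpha)$ with $d_1(\alpha)$ as in~(\ref{a0-alpha}). This is the step requiring the most care, but it amounts merely to keeping the three ranges $\alpha=0$, $\alpha\in(0,\tfrac12]$, $\alpha\in(\tfrac12,1)$ straight; no genuine obstacle is expected.

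It then remains to translate Theorem~\ref{t.estim-E} into the notation of the corollary. With the above $g$, the estimate in Theorem~\ref{t.estim-E}(i) becomes exactly the bound $\frac{3\sqrt3}{2}d_1(\alpha)$ of part~(i), and its sharpness carries over; the extremal mapping $\widetilde{F}_{i,j}[g]$ of~(\ref{supp3-E}) becomes $\widetilde{\Phi}_{i,j}^{\alpha}$ of~(\ref{support2b}), the choice of sign being immaterial in view of the remark following Theorem~\ref{r.supp-bounded}. Thus part~(ii) is Theorem~\ref{t.estim-E}(ii) and part~(iii) is Theorem~\ref{t.estim-E}(iii), both read off directly once the identifications $d_1(g)=d_1(\alpha)$, $S_g^0(\B^n)=S_\alpha^0(\B^n)$ and $S_g^*(\B^n)=S_\alpha^*(\B^n)$ are in place.
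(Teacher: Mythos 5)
Your proposal is correct and is essentially the paper's own route: the paper obtains this corollary by specializing Theorem~\ref{t.estim-E} to $g(\zeta)=\frac{1-\zeta}{1+(1-2\alpha)\zeta}$, using the identification ${\rm dist}(1,\partial g(\U))=d_1(\alpha)$ already recorded before Corollary~\ref{c.carath2}. Your explicit verification of Assumption~\ref{assump} and of the distance computation (disc of diameter $[0,1/\alpha]$, yielding the two cases of~(\ref{a0-alpha})) matches what the paper leaves implicit.
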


\begin{corollary}
\label{c.estim-strongly-star-E}
Let $n\geq 2$, $\alpha\in (0,1]$.

$(i)$
Let $f\in SS_{\alpha}^0(\B^n)$.
Then
\[
\left|
\frac{1}{2}\frac{\partial^2 f_i}{\partial z_j^2}(0)
\right|
\leq \frac{3\sqrt{3}}{2}\sin\left(\frac{\alpha\pi}{2}\right),
\quad \mbox{ for } 1\leq i\neq j\leq n.
\]
These estimates are sharp, for all $i\neq j$.

$(ii)$
Let
$\widetilde{\Theta}_{i,j}^{\alpha}:\B^n\to \C^n$ be given by
\begin{equation}
\label{support3-strongly-star-E}
\widetilde{\Theta}_{i,j}^{\alpha}(z)=z\pm\frac{3\sqrt{3}}{2}\sin
\Big(\frac{\alpha\pi}{2}\Big)z_j^2 e_i,\quad z=(z_1,z_2, \dots, z_n)\in\B^n,
\end{equation}
for $1\leq i\neq j\leq n$.
Then $\widetilde{\Theta}_{i,j}^{\alpha}\in SS_{\alpha}^*(\B^n)$, thus
$\widetilde{\Theta}_{i,j}^{\alpha}\in SS_{\alpha}^0(\B^n)$,
$1\leq i\neq j\leq n$.

$(iii)$
$\widetilde{\Theta}_{i,j}^\alpha\in SS_{\alpha}^*(\B^n)$
are bounded support
points for the family $SS_{\alpha}^0(\B^n)$
and also bounded support points for the
family $SS_{\alpha}^*(\B^n)$
for $1\leq i\neq j\leq n$.
\end{corollary}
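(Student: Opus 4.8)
The plan is to obtain this corollary directly from Theorem~\ref{t.estim-E} by specializing to the function $g(\zeta)=\left(\frac{1-\zeta}{1+\zeta}\right)^{\alpha}$, $\zeta\in\U$, with the branch of the power function chosen so that $g(0)=1$, in the same spirit in which Corollaries~\ref{c.estim2} and~\ref{cor.supp-bounded} were deduced on $\B_X$. First I would verify that this $g$ meets the conditions of Assumption~\ref{assump}. Since $\zeta\mapsto\frac{1-\zeta}{1+\zeta}$ maps $\U$ conformally onto the right half-plane $\{w:\Re w>0\}$, the function $g$ maps $\U$ univalently onto the angular sector $\Sigma_\alpha=\{w\in\C\setminus\{0\}:|\arg w|<\tfrac{\alpha\pi}{2}\}$, which for $\alpha\in(0,1]$ is convex; hence $g$ is a convex univalent function with $g(0)=1$ and $\Re g(\zeta)>0$ on $\U$, so Assumption~\ref{assump} holds.

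Next I would compute $d_1(g)={\rm dist}(1,\partial g(\U))$. The boundary $\partial\Sigma_\alpha$ consists of the two rays $\{t e^{\pm i\alpha\pi/2}:t\ge0\}$; minimising $t^2-2t\cos(\tfrac{\alpha\pi}{2})+1$ over $t\ge0$ shows that the distance from the point $1$ to either ray equals $\sin\!\big(\tfrac{\alpha\pi}{2}\big)$, and since $\sin\!\big(\tfrac{\alpha\pi}{2}\big)\le 1=|1-0|$ the nearest boundary point lies on one of the rays, whence $d_1(g)=\sin\!\big(\tfrac{\alpha\pi}{2}\big)$. This is exactly the constant already recorded in Corollary~\ref{c.carath-strongly-star}.

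Finally I would use the identifications from Definition~\ref{d.g-param} and Remark~\ref{r.examples}(iii): for this $g$ one has $S_g^*(\B^n)=SS_\alpha^*(\B^n)$ and $S_g^0(\B^n)=SS_\alpha^0(\B^n)$, and substituting $d_1(g)=\sin\!\big(\tfrac{\alpha\pi}{2}\big)$ into~(\ref{supp3-E}) turns $\widetilde{F}_{i,j}[g]$ into $\widetilde{\Theta}_{i,j}^{\alpha}$ of~(\ref{support3-strongly-star-E}). Part~(i) is then Theorem~\ref{t.estim-E}(i) for this $g$, including its sharpness; part~(ii) is Theorem~\ref{t.estim-E}(ii) for this $g$; and part~(iii) is Theorem~\ref{t.estim-E}(iii) for this $g$, combined with the two identifications above. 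The only genuine content here is the elementary fact that $\Sigma_\alpha$ is convex and the trigonometric computation of $d_1(g)$; I expect no serious obstacle, since everything else is an immediate substitution into Theorem~\ref{t.estim-E}.
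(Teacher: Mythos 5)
Your proposal is correct and matches the paper's (omitted) argument exactly: the paper derives this corollary by specializing Theorem \ref{t.estim-E} to $g(\zeta)=\left(\frac{1-\zeta}{1+\zeta}\right)^{\alpha}$, using the convexity of the sector $g(\U)$, the value ${\rm dist}(1,\partial g(\U))=\sin\left(\frac{\alpha\pi}{2}\right)$ already recorded before Corollary \ref{c.carath-strongly-star}, and the identifications $S_g^*(\B^n)=SS_\alpha^*(\B^n)$, $S_g^0(\B^n)=SS_\alpha^0(\B^n)$. Your verification of Assumption \ref{assump} and the trigonometric computation of $d_1(g)$ supply precisely the details the paper leaves to the reader.
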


\section*{Acknowledgments}
\label{acknowledgments}
{Hidetaka Hamada was partially supported by JSPS KAKENHI Grant Number JP19K03553.
We thank M. Iancu for discussions concerning Proposition \ref{p.a0}.}

\end{document}